\documentclass[11pt,a4paper]{article}
\usepackage[T1]{fontenc}
\usepackage[utf8]{inputenc}
\usepackage{authblk}
\usepackage{amssymb, amsmath}
\usepackage{amsthm, amsfonts,mathrsfs}
\usepackage{times}
\usepackage{graphicx}
\usepackage{subfigure}
\usepackage{cite}
\usepackage{hyperref}
\usepackage{epsfig}
\usepackage{color}
\usepackage[all]{xy}
\usepackage{microtype}
\usepackage{setspace}
\usepackage{sectsty}
\subsectionfont{\normalfont\mdseries}
\allowdisplaybreaks
\hypersetup{colorlinks=true,citecolor=blue,linkcolor=blue}
\newtheorem{thm}{Theorem}[section]

\newtheorem{definition}[thm]{Definition}
\newtheorem{lemma}[thm]{Lemma}
\newtheorem{prop}[thm]{Proposition}
\newtheorem{remark}[thm]{Remark}

\numberwithin{equation}{section}

\begin{document}
\title{Smoluchowski-Kramers Approximation for Stochastic Differential Equations driven by Fractional Brownian Motion}

	\author{
	Jiaxin Zha$^1$\thanks{Corresponding author, Email: 18957844432@163.com}\\
		{\textsuperscript{1}\footnotesize\itshape School of Mathematics, Nanjing University of Aeronautics and Astronautics}\\
		{\footnotesize\itshape Nanjing, Jiangsu 211106, P.R. China}
	}
	\date{\vspace{-40pt}}
	\maketitle

	\maketitle
	\noindent{\small{\hspace{1.1cm} }}
	
	\noindent \textbf{Abstract~}   In this paper, we discuss the validity of an approximation inspired by the Smoluchowski–Kramers approximation for a class of stochastic differential equations driven by fractional Brownian motion with additive noise. By rewriting such equations in the form of slow–fast systems and decomposing the fast component into three parts, we investigate the small mass limit of these equations and derive the corresponding convergence rates. Furthermore, under certain regularity conditions, we study the large and moderate deviation principles for a class of stochastic differential equations driven by fractional Brownian motion with small multiplicative noise via the weak convergence approach.  
	\\[2mm]
	{\bf Keywords:} Smoluchowski-Kramers approximation; Fractional Brownian motion;  Malliavin calculus.
	\\[2mm]
	{\it Subject Classification: 60H10; 60G22.}
	\section{Introduction}
	\subsection{Smoluchowski-Kramers approximation}
	
	According to Newton's law, the motion of a particle with mass $\mu$ $(0 < \mu \ll 1)$ in a force field \(b(X)+\sigma(X) \dot{B}\), where the friction force is proportional to the velocity, can be described by
	\begin{equation*}
          \mu \ddot{X}^{\mu}(t) = b(X^{\mu}(t)) + \sigma(X^{\mu}(t)) \dot{B}(t) - \alpha \dot{X}^{\mu}(t), 
          \quad X^{\mu}(0) = x_0, \quad \dot{X}^{\mu}(0) = y_0,
        \end{equation*}
	where \(X^\mu(t)\) is the position of the particle at time \(t\), \(\dot{X}^{\mu}(t)\) can be regarded as the velocity, \(b(X)\) is the deterministic part of the force, \(\sigma(X)\) is the intensity of the noise, \(\sigma(X) \dot{B}\) is the random part of the force, and \(b: \mathbb{R}^{d} \to \mathbb{R}^{d}\) and \(\sigma: \mathbb{R}^{d} \to \mathbb{R}^{d} \times \mathbb{R}^{d}\). The term \(\dot{B}(t)\) is standard \(\mathbb{R}^{d}\)-valued Gaussian white noise, and the term \(\alpha \dot{X}^{\mu}(t)\) describes the resistance to motion (friction), where the friction coefficient \(\alpha\) is a fixed positive constant. Without loss of generality, we can set \(\alpha=1\). When \(\mu\rightarrow 0\), \(X^\mu(t)\) can be approximated by the solution of the first-order equation
	\begin{equation*}
            \dot{X}(t) = b(X(t)) + \sigma(X(t)) \dot{B}(t),\quad X(0) = x_0.
        \end{equation*}
	Formally, the limiting equation is obtained by neglecting the term \(\mu \ddot{X}^{\mu}(t)\) (see in \cite{smoluchowski1916drei} and \cite{kramers1940brownian}).
	
	For each \(T>0\), \(\eta>0\),
	\begin{equation*}
		\lim _{\mu \to 0} \mathbb{P}\left( \max _{0 \leq t \leq T} \left| X^{\mu}(t)-X(t) \right| > \eta \right) = 0,
	\end{equation*}
	this statement is called Smoluchowski–Kramers approximation of $X^{\mu}(t)$ by $X(t)$ (see in Freidlin \cite{freidlin2004some}).
	This result justifies the use of a first-order equation instead of a second-order equation to describe the motion of small particles perturbed by Gaussian white noise. Moreover, it is more convenient to use the solution \(X(t)\) of the first-order stochastic differential equation. Due to its applications, the Smoluchowski–Kramers approximation has been deeply studied by many scholars. Among them, Freidlin \cite{freidlin2004some}, Cerrai and Freidlin \cite{cerrai2006smoluchowski}, Cerrai and Salins \cite{cerrai2014smoluchowski}, Cerrai and Freidlin \cite{cerrai2017smoluchowski}, and He et al. \cite{he2019parameter} have established various Smoluchowski-Kramers approximation results for stochastic equations driven by Gaussian white noise, covering both finite-dimensional and infinite-dimensional cases. These studies fully exploit the Markov property and semimartingale nature of Gaussian white noise, thereby laying a solid foundation for research in this field.
    
    However, in practical systems such as physics, biology, and finance, random noise often exhibits more complex statistical properties than Gaussian white noise, for instance, long-range dependence or significant short-term autocorrelation. This necessitates the study of scenarios driven by fractional Brownian motion. Compared with classical Brownian motion (\(H = \frac{1}{2}\)), the properties of fractional Brownian motion are determined by its Hurst parameter \(H \in (0,1)\), which exhibit distinct mathematical characteristics: when \(H > \frac{1}{2}\), its increments display long-term positive correlation (persistence); when \(H < \frac{1}{2}\), they show negative correlation (anti-persistence). More importantly, for all cases where \(H \neq \frac{1}{2}\), fractional Brownian motion is non-Markovian and  not a semimartingale,  rendering classical analytical tools based on Itô stochastic integration theory inapplicable.
    Boufoussi and Tudor \cite{boufoussi2005kramers} studied the Smoluchowski–Kramers approximation for stochastic equations driven by fractional Brownian motion. However, their analysis was confined to qualitative convergence for models with additive noise and did not provide a convergence rate. The aim of the present paper is to investigate a more general model with power-law intensity \(\mu^\alpha\) noise, prove the validity of the corresponding Smoluchowski-Kramers approximation, and provide the explicit convergence rate for \(0 < \alpha < 1\). 
    
	In the first part of this paper, we consider the following stochastic differential equation driven by fractional Brownian motion with additive noise
	\begin{equation}
		\begin{cases}
			\mu \ddot{X}^{\mu}(t) + \dot{X}^{\mu}(t) = b(X^\mu(t)) + \mu^\alpha \dot{B}^{H}(t), \\
			X^\mu(0) = x_0, \quad \dot{X}^{\mu}(0) = y_0.
		\end{cases}\label{wwave}
	\end{equation}
	Here, \(0 \leq \alpha < 1\) is a constant, and \({B^H(t), t \in[0, T]}\) is a fractional Brownian motion with Hurst parameter \(H \in (\frac{1}{2}, 1)\). System (\ref{wwave}) describes the motion of a particle with mass $\mu$ in an environment without thermal fluctuations, and can be regarded as a singularly perturbed differential equation system with stochastic noise, where the noise intensity is deterministically controlled by \( \mu^\alpha\).
	Formally, the effective approximation model for (\ref{wwave}) can also be obtained by omitting the term \(\mu \ddot{X}^{\mu}(t)\), i.e.,
	\begin{equation}
		\begin{cases}
			\dot{\bar{X}}^\mu(t) = b({\bar{X}}^\mu(t)) + \mu^\alpha \dot{B}^{H}(t), \\
			\bar{X}^\mu(0) = x_0.\label{aayeq}
		\end{cases}
	\end{equation}
	Clearly, when \(\alpha=0\), the above statement reduces to the classical Smoluchowski–Kramers approximation; in this case, (\ref{aayeq}) does not depend on \(\mu\), and we can use \(\bar{X}\) instead of \(\bar{X}^{\mu}\). This paper systematically investigates the Smoluchowski--Kramers approximation for stochastic systems driven by fractional Brownian motion by rewriting the original second-order system as a slow--fast system and employing a solution splitting technique (see in \cite{wang2011approximation} or \eqref{decomposition} in this paper) to split the fast variable into three independently analyzable parts. Under suitable assumptions, we rigorously prove that as the mass parameter \(\mu \to 0\), the position process \(X^{\mu}(t)\) of the original system converges to the process \(\bar{X}^{\mu}(t)\) described by a first-order approximate system. Furthermore, we precisely characterize the convergence behavior under different noise intensities: when the noise intensity is \(\mu^{\alpha}\) with \(0 < \alpha < 1\), an explicit convergence rate of order \(\mu^{\alpha}\) is obtained; whereas in the classical case \(\alpha = 0\), while convergence in the mean sense is established, no explicit convergence rate can be derived.
    
    To provide a deeper characterization of the fine asymptotic behavior of the system when \(\alpha=0\), we further introduce a small perturbation parameter \(\varepsilon^H\) in the second part to study the probabilistic asymptotic properties as \(\varepsilon \to 0\). By strengthening the regularity conditions on the noise term \(\sigma\), we successfully extend the model from additive noise to the more general case of multiplicative noise, and establish both the large and moderate deviation principles for the system.

	\subsection{Large Deviation Principle for Stochastic Differential Equations with Small Multiplicative Noise}

    As previously discussed, in the classical case of \(\alpha = 0\), although the system converges in the mean sense, an explicit convergence rate cannot be derived. This highlights the significant limitations of traditional convergence analysis in characterizing fine asymptotic behavior. To address this shortcoming and deeply investigate the mechanism of rare events under small noise perturbations when \(\alpha = 0\), we introduce in this study the powerful probabilistic tools of large and moderate deviation theory. It is worth emphasizing that the study of asymptotic behavior under small noise perturbations extends beyond the convergence of trajectories themselves. In complex systems across physics, biology, finance, and many other fields, noise often induces rare events. For example, the "tunneling" phenomenon, where the system transitions between different stable states, occurs with a probability that tends to zero as the noise intensity decreases. Large deviation theory precisely quantifies the exponential decay rate of such probabilities (see in \cite{freidlin2012random}), while moderate deviation theory further fills the theoretical gap between the central limit theorem and large deviation principles, describing asymptotic behaviors that lie between typical fluctuations and large deviations.
    
    For stochastic systems driven by classical Brownian motion (Gaussian white noise), the large deviation theory is well-established. Early work is represented by the framework developed by Freidlin and Wentzell \cite{freidlin2012random} for classical diffusion processes. With the development of stochastic models, researchers began to focus on more complex noise structures. Among these, systems driven by fractional Brownian motion present fundamentally new challenges for large deviation analysis due to their long- or short-range dependence and non-Markovian properties. In the study of large deviations for systems driven by fractional Brownian motion, Li and Qian \cite{li2021large} established a capacity-based large deviation principle for fractional Brownian motion with Hurst parameter \(H \geq \frac{1}{2}\) on the classical Wiener space. However, their work primarily focused on additive noise scenarios and qualitative analysis of the process itself, without extending it to stochastic differential equation systems. Budhiraja and Song \cite{budhiraja2025large} employed weak convergence and variational methods to construct large deviation principles for functionals and stochastic dynamical systems driven by fractional Brownian motion with \(H > \frac{1}{2}\). Nevertheless, their analysis was confined to first-order systems and did not investigate deviation behaviors under the physical approximation where the mass also tends to zero. 
    
    In the field of moderate deviations, Bourguin et al. \cite{bourguin2024moderate} studied the moderate deviation principle for slow-fast systems driven by fractional Brownian motion with \(H \in (\frac{1}{2},1)\), revealing a discontinuity in the action functional at \(H=\frac{1}{2}\), but did not deeply explore the transitional relationship between the moderate deviation scale, the central limit theorem, and large deviations. Subsequently, Yang et al. \cite{yang2024moderate} combined weak convergence methods with a Khasminskii-type averaging principle to establish a moderate deviation principle for two-time-scale systems involving mixed fractional Brownian motion, thereby filling the gap between the central limit theorem and large deviation asymptotics for multiscale systems. However, their analysis primarily relied on the \(\alpha\)-Hölder space topology and required the fast variable to depend entirely on the slow variable, which limits the applicability of the results to practical problems.
	
	Therefore, the second goal of this paper is to establish large and moderate deviation principles for the stochastic differential equation with small noise driven by fractional Brownian motion
	\begin{equation}
		\begin{cases}
			\mu \ddot{X}^{\mu,\varepsilon}(t) + \dot{X}^{\mu,\varepsilon}(t) = b({X}^{\mu,\varepsilon}(t)) + {\varepsilon}^{H}\sigma({X}^{\mu,\varepsilon}(t))\dot{B}^H(t),\\
			{X}^{\mu,\varepsilon}(0) = x_0,\quad \dot{X}^{\mu,\varepsilon}(0) = y_0,
		\end{cases}\label{original1}
	\end{equation}
	where $\mu = \mu(\varepsilon)$ satisfies $\mu \to 0$ as $\varepsilon \to 0$. Theorem \ref{ldpresult} proves that the process ${X}^{\mu,\varepsilon}(t)$ satisfies a large deviation principle with speed $\varepsilon^{2H}$ and rate function $I$. Intuitively, this result states that for any Borel set $A \subset C([0,T];\mathbb{R}^{d})$, we have \(\mathbb{P}\{{X}^{\mu,\varepsilon}(t) \in A\} \approx \exp\left\{ -\varepsilon^{-2H} \inf_{g \in A} I(g) \right\}.\)
	
	The large deviation principle mainly characterizes the asymptotic behavior of small probability events where ${X}^{\mu,\varepsilon}(t)$ deviates from its average path (with deviation of order $O(1)$). To further study fluctuations of a smaller order of magnitude than those captured by large deviations, we introduce the moderate deviation principle. In Theorem \ref{mdpresult}, we will prove that when considering deviations of order $\varepsilon^H \lambda(\varepsilon)$, the normalized process \(\frac{{X}^{\mu,\varepsilon}(t) - \bar{X}^0(t)}{\varepsilon^H \lambda(\varepsilon)}\)
	satisfies a moderate deviation principle with speed $\lambda^{-2}(\varepsilon)$ and rate function $\bar{I}$. This means that for any Borel set $A \subset C([0,T];\mathbb{R}^{d})$, we have \(\mathbb{P}\left\{ \frac{{X}^{\mu,\varepsilon}(t) - \bar{X}^0(t)}{\varepsilon^H \lambda(\varepsilon)} \in A \right\} \approx \exp\left\{ -\lambda^{2}(\varepsilon) \inf_{g \in A} \bar{I}(g) \right\},\)
	where $\bar{X}^0(t)$ is the solution of the corresponding "average" equation (\ref{sim}).
	
	Regarding the proof method, we adopt the weak convergence approach to establish the large and moderate deviation principles for the family of stochastic processes $\{{X}^{\mu,\varepsilon}\}_{\varepsilon > 0}$ defined by (\ref{original1}). This method was systematically applied to the study of large deviation problems by Budhiraja and Dupuis \cite{budhiraja2019analysis}, Dupuis and Ellis\cite{dupuis2011weak}. Its core lies in the equivalence between the large deviation principle and the Laplace principle established by Dupuis and Ellis \cite{dupuis2011weak}, and the variational representations of exponential functionals of Wiener processes developed by Budhiraja and Dupuis \cite{budhiraja2000variational}. A set of sufficient conditions ensuring the validity of the Laplace principle was provided by Budhiraja, Dupuis and Maroulas \cite{budhiraja2008large}, building on the foundational work of Budhiraja and Dupuis \cite{budhiraja2000variational}, while Matoussi, Sabbagh and Zhang \cite{matoussi2021large} further extended these related results. Under this framework, we will use Lemma \ref{LDP} ( given in Matoussi, Sabbagh and Zhang \cite{matoussi2021large}) as the main tool to facilitate the derivation of the large deviation principle.  We also referred to relevant papers such as \cite{fan2023}, \cite{shen2024} and \cite{li2025} for the study of large deviations in stochastic differential equation driven by Gaussian process.
    
	The structure of the subsequent parts of this paper is as follows. In Section \ref{Pre}, we review some concepts of stochastic calculus for fractional Brownian motion, introduce the conditions on the coefficients, and recall the large deviation principle. In Section \ref{result}, we prove the validity of a Smoluchowski-Kramers type approximation for a class of stochastic differential equations driven by fractional Brownian motion, and provide the corresponding moment estimates along with detailed proofs. Section \ref{sec large} is devoted to establishing the large deviation principle for the small-noise stochastic differential equation driven by fractional Brownian motion. Specifically, we derive the large deviation principle for the family of processes $\{{X}^{\mu,\varepsilon}\}_{\varepsilon>0}$ under the condition that $\mu(\varepsilon)\to 0$ as $\varepsilon \to 0$. In Section \ref{sec mod}, these results are extended to the moderate deviation regime, where we establish the corresponding moderate deviation principle for the same family.
	
	We note that throughout the paper, $C$ denote positive constants that may change from line to line, and the value of $C$ may depend on other constants such as $T$ and the Hurst parameter $H$ , their values will be indicated in the subscript of $C$, but it never depends on $\mu$. We use the notation $x\lesssim y$ to indicate that there exists a constant $C$ such that $x\leq Cy$.
	
	\section{Preliminaries}\label{Pre}
	
	Let $(\Omega,\mathscr{F},\mathbb{P})$ be a complete probability space, on which there is a filtration $(\mathscr{F}_t)_{0\leq t\leq T}$ satisfying the usual condition, where $0<T<\infty$ is fixed throughout the paper.  In the rest,$\langle x, y \rangle := \sum\limits_{i=1}^{d} x_i y_i$ for all $x, y \in \mathbb{R}^d$, $|x|:=\sqrt{\sum\limits_{i=1}^{d}x_i^2}$ for each $x=(x_1,...,x_d)\in\mathbb{R}^d,$ and $||A||:=\sup\limits_{x\in\mathbb{R}^d,|x|=1}|Ax|$ for each matrix $A\in\mathbb{R}^{d\times d}.$ We use $\mathcal{L}^{p}([0,T];\mathbb{R}^{d\times m})$ to denote the family of $\mathbb{R}^{d\times m}$-valued $\mathscr{F}_{t}$-adapted processes $\{f(t)\}_{0\leq t\leq T}$ such that $\int_{0}^{T}|f(t)|^{p}dt<\infty$ a.s., and $\mathcal{M}^{p}([0,T];\mathbb{R}^{d\times m})$ to denote the family of processes $\{f(t)\}_{0\leq t\leq T}$ in $\mathcal{L}^{p}([0,T];\mathbb{R}^{d\times m})$ such that $\mathbb{E}\int_{0}^{T}|f(t)|^{p}dt<\infty$. Let $B^H$ be a $d$-dimensional fractional Brownian motion with Hurst parameter $H \in (\frac{1}{2},1)$ on $(\Omega,\mathscr{F},(\mathscr{F}_t)_{0 \leq t \leq T},\mathbb{P})$.  
	
	Consider the reproducing kernel Hilbert space $\mathbb{H}$:
	\[
	\mathbb{H} = \left\{ h \in C([0,T];\mathbb{R}^d) : h(0) = \mathbf{0},\ \|h\|_{\mathbb{H}} < \infty \right\},
	\]
	where $\mathbf{0}$ denotes the zero vector in $\mathbb{R}^d$, and the norm $\|h\|_{\mathbb{H}}$ is given by
	\[
	\|h\|_{\mathbb{H}}^2 = H(2H-1) \int_0^T \int_0^T h(s)h(t)|s-t|^{2H-2}dsdt.
	\]
	
	There exists a linear isometry between $\mathbb{H}$ and $L^2([0,T];\mathbb{R}^d)$. Specifically, for any $h \in \mathbb{H}$, there exists a unique $u \in L^2([0,T];\mathbb{R}^d)$ such that
	\[
	\int_0^th(s)dB^H(s)=\int_0^t(K_H^*h)(s)d B^{1/2}(s), \quad t \in [0,T],
	\]
    where $B^H$ is a fractional Brownian motion with parameter $H>\frac{1}{2}$, $K_H$ is the operator induced by the square-integrable kernel $K_H(t,s)=c_H (t-s)_+^{H-1/2}\int_0^1u^{H-3/2}(1-(1-t/s)u)^{H-1/2}du$, and  $K_H^*$ is the adjoint of $K_H$ with $(K_H^*h)(s)=\int_s^th(u)\frac{\partial K_H}{\partial u}(u,s)du$.
Then the norms are related by
	\[
	\|h\|_{\mathbb{H}}^2  = \int_0^{\cdot} |(K_H^*h)(s)|^2 ds = \|(K_H^*h)\|_{L^2}^2.
	\]

	Let $\mathscr{A}$ denote the class of $\mathbb{R}^d$-valued $\mathcal{F}_t$-predictable processes $h(\omega, \cdot)$ belonging to $\mathbb{H}$ a.s. For each $N > 0$, let
	\[
	S_N := \left\{ h \in \mathbb{H}; \|h\|_{\mathbb{H}}^2 = \int_0^T |(K_H^*h)(s)|^2 ds \le N \right\}.
	\]
	$S_N$ is endowed with the weak topology induced from $\mathbb{H}$. Define
	\[
	\mathscr{A}_N := \left\{ h \in \mathscr{A}, h(\omega, \cdot) \in S_N, \mathbb{P}\text{-a.s.} \right\}.
	\]
	
	We make the following assumptions about the coefficients. The functions $b(x):\mathbb{R}^{d}\to\mathbb{R}^{d}$, $\sigma(x):\mathbb{R}^{d}\to\mathbb{R}^{d}\times\mathbb{R}^{d}$, satisfy
	
	\textbf{(A1)}. There exists a constant $L>0$, such that for all $x_1,x_2\in\mathbb{R}^d$,
	\[
	|b(x_1)-b(x_2)| + \|\sigma(x_1)-\sigma(x_2)\| \leq L|x_1 - x_2| .
	\]
	
	\textbf{(A2)}. The functions $b,\sigma$ are uniformly bounded and the diffusion matrix $\sigma\sigma^T$ is uniformly nondegenerate.
	
	\textbf{(A3)}. There exists a constant $K>0$, such that for all $x\in\mathbb{R}^d$,
	\[
	\|\sigma(x)\|+\|D_s\sigma(x)\| \leq K,
	\]
	where $D_s$ is the Malliavin derivative with respect to random variables (for the details, we can see Alòs and Nualart \cite{alos2003stochastic}). The condition is similar in form to the assumption used in Shen and Wang \cite{shen2025conditional}, representing a relatively strong regularity requirement. In Shen and Wang \cite{shen2025conditional}, such a condition was successfully applied to address stochastic analysis problems under multiplicative noise, and the system studied in this paper also contains multiplicative stochastic perturbations. Within the framework of Malliavin analysis, adopting a similar regularity condition by referencing their approach is a natural choice, as it allows us to leverage existing estimation techniques and theoretical conclusions. Therefore, adopting this assumption in the present work is both reasonable and standard.
	
	In preparation for subsequently establishing the large deviation principle, we recall the concept of action functional given by Freidlin and Wentzell \cite{freidlin2012random} and the Laplace principle formulated by Dupuis and Ellis \cite{dupuis2011weak}.
	
	\begin{definition}\label{actfunction}
		Let $\mathscr{X}$ be a metric space with metric $\rho$. On the $\sigma$-algebra of its Borel subsets, let $\mu^{\varepsilon}$ be a family of probability measures depending on a parameter $\varepsilon>0$. Let $\lambda(\varepsilon)$ be a positive real-valued function going to $+\infty$ as $\varepsilon\downarrow 0$ and let $S(x)$ be a function on $\mathscr{X}$ assuming values in $[0,\infty]$. We shall say that $\lambda(\varepsilon)S(x)$ is an action function for $\mu^{\varepsilon}$ as $\varepsilon\downarrow 0$ if the following assertions hold:
		
		\begin{itemize}
			\item[(0)] the set $\Phi(s)=\{x:S(x)\leq s\}$ is compact for every $s\geq 0$;
			
			\item[(I)] for any $\delta>0$, any $\gamma>0$ and any $x\in\mathscr{X}$ there exists an $\varepsilon_{0}>0$ such that
			\begin{equation}\label{mu1}
				\mu^{\varepsilon}\{y:\rho(x,y)<\delta\}\geq \exp\{-\lambda(\varepsilon)[S(x)+\gamma]\},
			\end{equation}
			for all $\varepsilon\leq \varepsilon_{0}$;
			
			\item[(II)] for any $\delta>0$, any $\gamma>0$ and any $s>0$ there exists an $\varepsilon_{0}>0$ such that
			\begin{equation}\label{mu2}
				\mu^{\varepsilon}\{y:\rho(y,\Phi(s))\geq \delta\}\leq \exp\{-\lambda(\varepsilon)(s-\gamma)\},
			\end{equation}
			for all $\varepsilon\leq \varepsilon_{0}$.
		\end{itemize}
		
		If $X^{\varepsilon}$ is a family of random elements of $\mathscr{X}$ defined on the probability spaces $(\Omega^{\varepsilon},\mathscr{F}^{\varepsilon},\mathbb{P}^{\varepsilon})$, then the action function for the family of the distributions $\mu^{\varepsilon}$, $\mu^{\varepsilon}(A)=\mathbb{P}^{\varepsilon}(X^{\varepsilon}\in A)$ is called the action function for the family $X^{\varepsilon}$.In this case formulas (\ref{mu1}) and (\ref{mu2}) take the form:
		\[
		\begin{aligned}
			&\mathbb{P}^\varepsilon\{\rho(X^\varepsilon, x) < \delta\} \geq \exp\{-\lambda(\varepsilon)[S(x) + \gamma]\}, \\
			&\mathbb{P}^\varepsilon\{\rho(X^\varepsilon, \Phi(s)) \geq \delta\} \leq \exp\{-\lambda(\varepsilon)(s-\gamma)\}.
		\end{aligned}
		\]
		
		Separately, the functions $S(x)$ and $\lambda(\varepsilon)$ will be called the normalized action function and normalizing coefficient.
	\end{definition}
	\begin{definition}
		Let $\{X^\varepsilon,\varepsilon>0\}$ be a family of random variables taking values in a Polish space $\mathscr{X}$. Let $\eta(\varepsilon)$ be a positive real-valued function going to $+\infty$ as $\varepsilon\downarrow 0$ and let $I$ be a rate function on $\mathscr{X}$. We say that $\{X^\varepsilon,\varepsilon>0\}$ satisfies the Laplace principle with speed $\eta^{-1}(\varepsilon)$ and rate function $I$ if for every bounded and continuous function $f:\mathscr{X}\to\mathbb{R}$
		\[
		\lim_{\varepsilon\to 0} -\eta^{-1}(\varepsilon)\ln\mathbb{E}\left[\exp\left\{-\eta(\varepsilon)f(X^\varepsilon)\right\}\right] = \inf_{x\in\mathscr{X}}[I(x) + f(x)].
		\]
	\end{definition}
	
	If the rate function has compact level sets, then the Laplace principle is equivalent to the corresponding large deviations principle in Definition \ref{actfunction} with the same rate function ($S(x)=I(x)$, 
    $\lambda(\varepsilon)=\eta(\varepsilon)$). For a detailed proof, we refer the reader to Section 1.2 of Dupuis and Ellis \cite{dupuis2011weak}.
	We next summarize a set of sufficient conditions for the Laplace principle to hold, as established by Budhiraja and Dupuis \cite{budhiraja2000variational} and Budhiraja, Dupuis and Maroulas \cite{budhiraja2008large}.
	
	\begin{lemma}\label{ldp}\cite[Theorem 5]{budhiraja2008large}
		For any $\varepsilon>0$, let $\Gamma^\varepsilon$ be a measurable mapping from $C([0,T];\mathbb{R}^d)$ into $C([0,T];\mathbb{R}^d)$. Suppose that $\{\Gamma^\varepsilon\}_{\varepsilon>0}$ satisfies the following assumptions: there exists a measurable map $\Gamma^0:C([0,T];\mathbb{R}^d)\to C([0,T];\mathbb{R}^d)$ such that
		
		\begin{itemize}
			\item[(a)] For every $N<\infty$, the set $\{\Gamma^0\left(\int_0^{\cdot} h(s) ds\right) ; h \in S_N\}$ is a compact subset of $C([0,T];\mathbb{R}^d)$;
			
			\item[(b)] Consider $N<\infty$ and a family $\{h^\varepsilon\}_{\varepsilon>0}\subset\mathscr{A}_N$ such that $h^\varepsilon$ converges in distribution as $S_N$-valued random variables to $h$ as $\varepsilon\to 0$, then $\Gamma^\varepsilon\left({\varepsilon}^{H}B^H({\cdot})+ \int_0^{\cdot} h^\varepsilon(s) ds\right)$ converges in distribution to $\Gamma^0\left(\int_0^{\cdot} h(s) ds\right)$.
		\end{itemize}
		
		Let $X^\varepsilon = \Gamma^\varepsilon({\varepsilon}^{H}B^H(t))$, then the family $\{X^\varepsilon\}_{\varepsilon>0}$ satisfies a large deviation principle in $C([0,T];\mathbb{R}^d)$ with speed $\varepsilon^{2H}$ and rate function $I$ given by
		\[
		I(g) = \inf_{h \in \mathbb{H},\; g = \Gamma^0\left(\int_0^{\cdot} h(s) ds\right)} \left\{ \frac{1}{2} \|h\|_{\mathbb{H}}^2 \right\}, \quad g \in C([0,T];\mathbb{R}^d)
		\]
		with $\inf\emptyset = \infty$ by convention.
	\end{lemma}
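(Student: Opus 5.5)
The plan is to derive Lemma \ref{ldp} from the Brownian case by transferring everything through the Volterra representation of $B^H$, and then to invoke the Budhiraja--Dupuis variational machinery on the underlying Wiener process. Write $B^H(t)=\int_0^t K_H(t,s)\,dW(s)$ with $W=B^{1/2}$ a standard $d$-dimensional Brownian motion generating the filtration. The Cameron--Martin space of $B^H$ is $\mathbb{H}$, and the isometry $h\mapsto K_H^*h$ identifies $\mathbb{H}$ with $L^2([0,T];\mathbb{R}^d)$. Under this identification a control $v=K_H^*h\in L^2$ shifts $W$ by $\int_0^\cdot v\,ds$ and shifts $B^H$ by the element of $\mathbb{H}$ associated with $h$; this is the element written $\int_0^\cdot h(s)\,ds$ in the statement. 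Since $K_H^*$ is an isometry, it maps $S_N$ onto the $L^2$-ball of radius $\sqrt N$, it is a homeomorphism for the weak topologies, and it maps predictable processes to predictable ones, so $\mathscr{A}_N$ corresponds exactly to the usual class of bounded predictable controls for $W$.

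The first step is to define $\tilde\Gamma^\varepsilon(\omega):=\Gamma^\varepsilon(\varepsilon^H \mathcal K_H\omega)$ as a measurable map of the Brownian path $\omega$, where $\mathcal K_H$ is the measurable map realizing $B^H$ from $W$. Note that $\varepsilon^H B^H$ corresponds to the Brownian noise $\sqrt{\eta}\,W$ with $\eta=\varepsilon^{2H}$. Similarly, define $\tilde\Gamma^0(\int_0^\cdot v\,ds):=\Gamma^0(\mathcal K_H\int_0^\cdot v\,ds)$. The second step is to use the Boué--Dupuis/Budhiraja--Dupuis representation
\[
-\eta\ln\mathbb{E}e^{-f(X^\varepsilon)/\eta}=\inf_{v}\mathbb{E}\Big[\tfrac12\int_0^T|v|^2ds+f\big(\tilde\Gamma^\varepsilon(\sqrt\eta W+\textstyle\int_0^\cdot v\,ds)\big)\Big].
\]
After the substitution $v=K_H^*h$, this expresses the left side as an infimum over $h$ with cost $\frac12\|h\|_{\mathbb H}^2$ and state $\Gamma^\varepsilon(\varepsilon^HB^H+\int_0^\cdot h\,ds)$. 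Under this translation, hypotheses (a) and (b) become exactly the hypotheses of the Brownian sufficient-condition theorem of Budhiraja--Dupuis (Theorem 5 of \cite{budhiraja2008large} in finite dimensions). That theorem gives the Laplace upper and lower bounds with rate $\inf\{\frac12\|v\|_{L^2}^2\}$, which after translating back equals $I(g)$ as stated.

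The third step is to check that $I$ is a good rate function. For each $M$, the level set $\{I\le M\}$ is contained in $\bigcap_{\delta>0}\Gamma^0(S_{2M+\delta})$, which is compact by (a); one needs the image of $S_N$ to be closed under limits and the infimum to be attained. The standard argument takes a minimizing sequence in $S_N$, extracts a weakly convergent subsequence (using weak compactness of $S_N$), and applies (b) with deterministic controls and $\varepsilon\to0$ to identify the limit. The Laplace principle together with compact level sets then gives the LDP, by the equivalence recalled before the lemma.

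The main obstacle I expect is the precise bookkeeping of the shift. One has to confirm that shifting $W$ by $\int_0^\cdot K_H^*h\,ds$ shifts $B^H$ by the $\mathbb{H}$-element denoted $\int_0^\cdot h\,ds$, and that adaptedness and predictability are preserved under $K_H^*$. Here I would use the Volterra (causal) structure of $K_H$: $K_H(t,s)=0$ for $s>t$, so $K_H^*$ restricted to $[0,t]$ depends only on $h|_{[0,t]}$.
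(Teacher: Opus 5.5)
\paragraph{The predictability step fails.} The paper gives no argument for this lemma beyond citing the Brownian result of \cite{budhiraja2008large}, so a transfer through the Volterra representation, as you propose, is what is actually needed. However, the step you single out as the main obstacle is resolved incorrectly, and the argument fails there.

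The causality $K_H(t,s)=0$ for $s>t$ makes the map $v\mapsto\int_0^\cdot K_H(\cdot,s)v(s)\,ds$ non-anticipating, but its adjoint is anticipating. By the formula in Section~\ref{Pre} (taken on $[0,T]$), $(K_H^*h)(s)=\int_s^T h(u)\,\frac{\partial K_H}{\partial u}(u,s)\,du$ depends on $h|_{[s,T]}$. For $H>\frac12$ it is, up to power weights, the right-sided fractional integral $I^{H-1/2}_{T-}$, and $(K_H^*)^{-1}$ is a right-sided fractional derivative. Your justification conflates the truncated operator $K^*_{H,t}$, which does only see $h|_{[0,t]}$, with the operator $K^*_{H,T}$ that defines the control; $(K^*_{H,T}h)(s)\neq(K^*_{H,t}h)(s)$ for $s<t$. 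So predictability of $h$ neither implies nor is implied by predictability of $v=K_H^*h$, and $\mathscr{A}_N$ does \emph{not} correspond to the Brownian class of predictable controls.

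\paragraph{Consequence for the Laplace principle.} The Laplace lower bound survives, since it only uses deterministic controls. The upper bound breaks. The Bou\'e--Dupuis formula hands you near-optimal controls $v^\varepsilon$ that are predictable for $W$. You must then apply (b) to $h^\varepsilon=(K_H^*)^{-1}v^\varepsilon$, whose shift of $B^H$ is $\int_0^\cdot K_H(\cdot,s)v^\varepsilon(s)\,ds$. These $h^\varepsilon$ are in general not predictable, hence not in $\mathscr{A}_N$, so (b) as stated says nothing about them. Nor can you restrict the infimum: requiring both $v$ and $(K_H^*)^{-1}v$ to be predictable essentially leaves only deterministic controls, which do not represent $-\eta\ln\mathbb{E}e^{-f/\eta}$.

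To close the gap you must change the hypothesis, in one of two ways.
\begin{itemize}
\item Require (b) for all $h\in S_N$ with $K_H^*h$ predictable. Equivalently, parametrize by predictable $v\in L^2$, with shift $\int_0^\cdot K_H(\cdot,s)v(s)\,ds$ and cost $\tfrac12\|v\|_{L^2}^2$.
\item Keep predictable $h$ with the literal shift $\int_0^\cdot h(s)\,ds$, for which $v=K_H^{-1}\bigl(\int_0^\cdot h\bigr)$ is non-anticipating. But then the cost is the Cameron--Martin norm $\tfrac12\bigl\|K_H^{-1}\bigl(\int_0^\cdot h\bigr)\bigr\|_{L^2}^2$, not the double-integral norm $\tfrac12\|h\|_{\mathbb{H}}^2$.
\end{itemize}
The statement mixes these two conventions, and your bookkeeping step is exactly where the mismatch is hidden.

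\paragraph{Two smaller points.} First, identifying $S_N$ with the whole $L^2$-ball, and using its weak compactness, requires $\mathbb{H}$ to be the completion of the space defined in Section~\ref{Pre}, which for $H>\frac12$ contains distributions. $K_H^*$ does not map continuous $h$ with $h(0)=\mathbf{0}$ onto $L^2$. Second, your third step needs neither attainment nor (b): $\{I\le M\}=\bigcap_{n\ge1}\{\Gamma^0(\int_0^\cdot h(s)\,ds):h\in S_{2M+2/n}\}$ exactly, and this set is compact by (a).
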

	
	The following result was proved by Matoussi, Sabbagh and Zhang \cite{matoussi2021large}, who provided a convenient sufficient condition for verifying the assumptions in Lemma \ref{ldp} that is particularly suitable for our current setting.
	
	\begin{lemma}\label{LDP}\cite[Theorem 3.2]{matoussi2021large}
		For any $\varepsilon>0$, let $\Gamma^\varepsilon$ be a measurable mapping from $C([0,T];\mathbb{R}^d)$ into $C([0,T];\mathbb{R}^d)$. Suppose that $\{\Gamma^\varepsilon\}_{\varepsilon>0}$ satisfies the following assumptions: there exists a measurable map $\Gamma^0:C([0,T];\mathbb{R}^d)\to C([0,T];\mathbb{R}^d)$ such that
		
		\begin{itemize}
			\item[(i)] Let $\{h^\varepsilon\}_{\varepsilon>0}\subset S_N$ for some $N<\infty$ such that $h^\varepsilon$ converges to element $h$ in $S_N$ as $\varepsilon\to 0$, then $\Gamma^0\left(\int_0^{\cdot} h^\varepsilon(s) ds\right)$ converges to $\Gamma^0\left(\int_0^{\cdot} h(s) ds\right)$ in $C([0,T];\mathbb{R}^d)$;
			
			\item[(ii)] Let $\{h^\varepsilon\}_{\varepsilon>0}\subset\mathscr{A}_N$ for some $N<\infty$. For any $\delta>0$, we have
			\[
			\lim_{\varepsilon\to 0} \mathbb{P}\left\{ d\left( \Gamma^\varepsilon\left( {\varepsilon}^{H}B^H({\cdot})+ \int_0^{\cdot} h^\varepsilon(s) ds \right), \Gamma^0\left( \int_0^{\cdot} h^\varepsilon(s) ds \right) \right) > \delta \right\} = 0,
			\]
			where $d(\cdot,\cdot)$ denotes the metric in the space $C([0,T];\mathbb{R}^d)$.
		\end{itemize}
		
		Let $X^\varepsilon = \Gamma^\varepsilon({\varepsilon}^{H}B^H$), then the family $\{X^\varepsilon\}_{\varepsilon>0}$ satisfies a large deviation principle in $C([0,T];\mathbb{R}^d)$ with speed $\varepsilon^{2H}$ and rate function $I$ given by
		\[
		I(g) = \inf_{h \in \mathbb{H},\; g = \Gamma^0\left( \int_0^{\cdot} h(s) ds \right)} \left\{ \frac{1}{2} \|h\|_{\mathbb{H}}^2 \right\}, \quad g \in C([0,T];\mathbb{R}^d)
		\]
		with $\inf\emptyset = \infty$ by convention.
	\end{lemma}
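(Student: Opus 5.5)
The plan is to deduce Lemma \ref{LDP} from Lemma \ref{ldp}, so that it suffices to check conditions (a) and (b) there. Throughout we take the $\Gamma^0$ of Lemma \ref{ldp} to be the same map as in Lemma \ref{LDP}.

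Condition (a) will follow from (i) together with the compactness of $S_N$. The set $S_N$ is a closed ball in the Hilbert space $\mathbb{H}$, so with the weak topology it is compact. It is also metrizable, because $\mathbb{H}$ is separable, being isometric to a subspace of $L^2$ through $K_H^*$. Hence $S_N$ is a compact metric space. Condition (i) says exactly that $h\mapsto \Gamma^0(\int_0^\cdot h\,ds)$ is sequentially continuous from $S_N$ into $C([0,T];\mathbb{R}^d)$. Sequential continuity on a metric space is continuity, so the image of $S_N$ is compact, which is (a).

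For (b), take $\{h^\varepsilon\}\subset\mathscr{A}_N$ with $h^\varepsilon\Rightarrow h$ in distribution as $S_N$-valued random variables. Write
\[
\Gamma^\varepsilon\Big(\varepsilon^H B^H+\int_0^\cdot h^\varepsilon ds\Big)=\Gamma^0\Big(\int_0^\cdot h^\varepsilon ds\Big)+R^\varepsilon,
\]
where $R^\varepsilon$ is the difference between the two sides. Condition (ii) gives $d(R^\varepsilon,0)\to 0$ in probability. Let $F(k)=\Gamma^0(\int_0^\cdot k\,ds)$, which is continuous on $S_N$ by the first step. The continuous mapping theorem then gives $F(h^\varepsilon)\Rightarrow F(h)$ in $C([0,T];\mathbb{R}^d)$.

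We combine these with the converging-together (Slutsky) lemma in the Polish space $C([0,T];\mathbb{R}^d)$. For any bounded Lipschitz test function $\phi$, we have $|\mathbb{E}\phi(X_1)-\mathbb{E}\phi(X_2)|\le \|\phi\|_{\mathrm{Lip}}\delta+2\|\phi\|_\infty\mathbb{P}(d(X_1,X_2)>\delta)$. Applying this to $X_1=F(h^\varepsilon)+R^\varepsilon$ and $X_2=F(h^\varepsilon)$ shows that the first sequence has the same weak limit $F(h)$, which is (b). Lemma \ref{ldp} then yields the LDP with speed $\varepsilon^{2H}$ and the stated rate function $I$.

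The only delicate point is the topological one: the weak topology on $S_N$ must be metrizable and compact for sequential continuity to give genuine continuity, compactness of the image, and applicability of the continuous mapping theorem. This will be handled by separability of $\mathbb{H}$, via the isometry $h\mapsto K_H^*h$ into $L^2([0,T];\mathbb{R}^d)$. A secondary check is measurability of $F(h^\varepsilon)$ and $R^\varepsilon$ as random elements. This follows from measurability of $\Gamma^\varepsilon$ and $\Gamma^0$ and from the predictability of $h^\varepsilon$.
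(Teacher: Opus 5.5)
The paper gives no proof of its own: it cites Matoussi--Sabbagh--Zhang and describes the lemma as a sufficient condition for the hypotheses of Lemma~\ref{ldp}. Your reduction is correct and is the standard argument behind that cited result, namely (a) from (i) via weak compactness and metrizability of $S_N$, and (b) from (i) and (ii) via the continuous mapping theorem and a converging-together estimate. The one input you take on trust, that $\mathbb{H}$ is a separable Hilbert space so that $S_N$ is weakly compact and metrizable, comes from the paper's stated framework and is not a gap in your reduction.
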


    In the main proof process of this paper, we also need to use the following technical lemmas.
	\begin{lemma}\cite[Theorem1.10.3]{mishura2008stochastic}
		Let \( H \in (\frac{1}{2}, 1) \), \( f \in L_{\frac{1}{H}}[0,T] \), and define
		\[
		I_t(f) = \int_0^t f(s) dB^H(s).
		\]
		Then for any \( p > 0 \), there exists a constant \( C_p(H) > 0 \) such that
		\begin{equation}
			\left\| \sup_{0 \leq t \leq T} |I_t(f)| \right\|_p \leq C_p(H) \|f\|_{L_{\frac{1}{H}}[0,T]},
			\label{moment_inequality_fBm}
		\end{equation}
		where \( \| \cdot \|_p \) denotes the \( L^p \)-norm with respect to the probability measure.
	\end{lemma}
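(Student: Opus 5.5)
The inequality \eqref{moment_inequality_fBm} is a maximal moment bound for Wiener integrals of deterministic integrands against $B^H$ with $H>\frac12$. The plan is to prove it in three steps: bound the second moment of an increment by the $L^{1/H}$ norm, upgrade to all moments by Gaussianity, and then obtain the supremum through a Garsia--Rodemich--Rumsey (or Kolmogorov-type) chaining argument. We first take $f$ to be a step function and pass to general $f\in L_{\frac1H}[0,T]$ by density at the end.

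\textbf{Step 1: second moment of an increment.} For $0\le s<t\le T$, the isometry for Wiener integrals with respect to $B^H$ gives
\[
\mathbb{E}|I_t(f)-I_s(f)|^2 = H(2H-1)\int_s^t\int_s^t f(u)f(v)|u-v|^{2H-2}\,du\,dv .
\]
The key analytic input is the Hardy--Littlewood--Sobolev inequality in dimension one: the Riesz potential with kernel $|u-v|^{-(2-2H)}$ maps $L^{1/H}$ into its dual $L^{1/(1-H)}$. Hence the double integral is bounded by $C_H\|f\mathbf 1_{[s,t]}\|_{L^{1/H}}^2$. This is the main obstacle, and it is exactly why the norm $L^{1/H}$ appears in the statement.

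\textbf{Step 2: all moments.} Since $I_t(f)-I_s(f)$ is a centered Gaussian variable, all its moments are controlled by the variance:
\[
\mathbb{E}|I_t-I_s|^p \le C_p\bigl(\textstyle\int_s^t |f|^{1/H}\bigr)^{pH}.
\]

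\textbf{Step 3: control of the supremum.} Introduce the time change $\varphi(t)=\int_0^t|f|^{1/H}\,du/\|f\|^{1/H}_{L^{1/H}}$, which maps $[0,T]$ onto $[0,1]$. Define $J_r=I_{\varphi^{-1}(r)}(f)$, choosing a generalized inverse; $I$ is constant on intervals where $\varphi$ is flat, so $J$ is well defined. By Step 2,
\[
\mathbb{E}|J_r-J_q|^p\le C_p\|f\|^p_{L^{1/H}}|r-q|^{pH}.
\]
Because $pH>1$ for $p$ large, the Garsia--Rodemich--Rumsey lemma applies. It yields $\mathbb{E}\sup_{r}|J_r-J_0|^p\le C_p(H)\|f\|^p_{L^{1/H}}$, and $J_0=0$. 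This gives \eqref{moment_inequality_fBm} for large $p$. Small $p$ then follows from Lyapunov's (Jensen's) inequality.

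For general $f\in L_{\frac1H}[0,T]$, approximate by step functions. The maximal bound for step functions shows that the approximating integrals form a Cauchy sequence in $L^p(\Omega;C[0,T])$, so the bound passes to the limit.
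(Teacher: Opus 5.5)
The paper gives no proof of this lemma; it is quoted from \cite{mishura2008stochastic}, so there is no internal argument to compare with. Your proof is correct and self-contained, and it follows the standard route to this inequality. The one-dimensional Hardy--Littlewood--Sobolev inequality with kernel exponent $2-2H$ (exponents $1/H$ and $1/(1-H)$, which are conjugate) gives $\mathbb{E}|I_t-I_s|^2\le C_H\bigl(\int_s^t|f|^{1/H}\bigr)^{2H}$. Gaussianity then upgrades this to all moments. Finally, the normalized time change turns it into a Kolmogorov-type condition $\mathbb{E}|J_r-J_q|^p\le C\|f\|_{L^{1/H}}^p|r-q|^{pH}$ on $[0,1]$ with $pH>1$, to which Garsia--Rodemich--Rumsey applies.

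Compared with the bare citation, your route makes two things visible. First, the constant really depends only on $p$ and $H$, because HLS is dilation invariant and the time change normalizes to $[0,1]$. Second, the argument uses essentially that $I_t-I_s=\int_s^t f\,dB^H$ for a fixed integrand $f$. The second point matters for how the paper uses the lemma: in the proof of Lemma \ref{thmforlinearEsup} it is applied to $\bar w^\mu(t)=\int_0^t e^{-(t-s)/\mu}\,dB^H(s)$, whose integrand depends on $t$, and the statement does not cover that. For this process, integration by parts gives $\bar w^\mu(t)=B^H(t)-\mu^{-1}\int_0^t e^{-(t-s)/\mu}B^H(s)\,ds$. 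Hence $\sup_t|\bar w^\mu(t)|\le 2\sup_t|B^H(t)|$, which is all that (\ref{esforbwe}) needs.

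A few details should be stated explicitly when you write this up:
\begin{itemize}
\item Exclude the trivial case $\|f\|_{L^{1/H}}=0$ before dividing by it in $\varphi$.
\item GRR needs a continuous process. For a step function, flatness of $\varphi$ on $[a,b]$ forces $f=0$ on $(a,b)$ off finitely many points, so $I$ is pathwise constant there. This makes $J$ continuous, and $I_t=J_{\varphi(t)}$ gives $\sup_t|I_t|=\sup_r|J_r|$.
\item In the density step, the Wiener integral of $f\in L^{1/H}$ is itself defined as the $L^2(\Omega)$-limit of step-function integrals, which is legitimate precisely because of your Step 1 bound. So the limit in $L^p(\Omega;C[0,T])$ is a continuous version of $I_\cdot(f)$.
\end{itemize}
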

	
	\begin{lemma}\cite[Remark 5]{alos2003stochastic}
		Let \( H \in (\frac{1}{2}, 1) \), and \( u = \{u_t, t \in [0,T]\} \) be a stochastic process in the space \( \mathbb{L}_{H}^{1,p} \), which is defined as the collection of \( \mathcal{F}_t \)-adapted processes such that \( u \in L^p(\Omega \times [0,T]) \) and its Malliavin derivative \( Du \) satisfies 
    \[
    \mathbb{E}\left[ \int_0^T \left( \int_0^T |D_s u_r|^{\frac{1}{H}} ds \right)^{pH} dr \right] < \infty,
    \]
    for \( p > \frac{1}{H} \).  Then
		\begin{equation}
                \mathbb{E}\left[ \sup_{t \in [0,T]} \left| \int_0^t u_s d B^H(s) \right|^p \right] 
                \leq C \left[ \int_0^T |\mathbb{E} u_s|^p ds + \mathbb{E} \int_0^T \left( \int_0^T |D_s u_r|^{\frac{1}{H}} ds \right)^{pH} dr \right],
                \label{maximal_inequality}
            \end{equation}
		where the constant \( C > 0 \) depends on \( p \), \( H \) and \( T \).
	\end{lemma}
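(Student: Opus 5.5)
The plan is to combine the factorization method of Da Prato--Kwapień--Zabczyk with Meyer-type $L^p$ bounds for the divergence operator. Note first that for $H>\frac12$ the integral $\int_0^t u_s\,dB^H(s)$ is understood as the divergence $\delta(u\mathbf 1_{[0,t]})$ with respect to $B^H$. Its domain is controlled by the space $|\mathcal H|$ of functions $\varphi$ with
\[
\int_0^T\!\!\int_0^T|\varphi(s)||\varphi(r)||s-r|^{2H-2}\,ds\,dr<\infty .
\]
By the Hardy--Littlewood--Sobolev inequality this space satisfies $\|\varphi\|_{|\mathcal H|}\le C_H\|\varphi\|_{L^{1/H}[0,T]}$. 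Meyer's inequality for $\delta$, transported through this embedding, will give
\[
\mathbb E|\delta(v)|^p\le C_p\Bigl(\|\mathbb E v\|_{L^{1/H}}^p+\mathbb E\Bigl(\int_0^T\Bigl(\int_0^T|D_sv_r|^{1/H}ds\Bigr)\,dr\Bigr)^{pH}\Bigr).
\]
I will use this bound freely. For deterministic $v$, and for the first term, it is consistent with the lemma (\ref{moment_inequality_fBm}) quoted above.

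\textbf{Step 1 (factorization).} Fix $\alpha\in(\frac1p,\,H)$; this interval is nonempty exactly because $p>\frac1H$. Use
\[
\int_r^t (t-\theta)^{\alpha-1}(\theta-r)^{-\alpha}\,d\theta=\frac{\pi}{\sin\pi\alpha}
\]
to write
\[
\int_0^t u_s\,dB^H(s)=\frac{\sin\pi\alpha}{\pi}\int_0^t (t-\theta)^{\alpha-1}Y_\theta\,d\theta,
\qquad
Y_\theta=\int_0^\theta(\theta-s)^{-\alpha}u_s\,dB^H(s).
\]
Exchanging the $d\theta$ and $\delta$ integrals requires a Fubini theorem for the divergence. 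I will justify it by duality: pair both sides with a smooth functional $F$ and use $\mathbb E[F\delta(v)]=\mathbb E\langle DF,v\rangle_{\mathcal H}$, together with the integrability from Step 3.

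\textbf{Step 2 (removing the supremum).} Since $\alpha>\frac1p$, the conjugate exponent satisfies $(\alpha-1)p'>-1$. Hölder's inequality in $\theta$ then gives
\[
\sup_{t\le T}\Bigl|\int_0^t u_s\,dB^H(s)\Bigr|^p\le C_{\alpha,p,T}\int_0^T|Y_\theta|^p\,d\theta .
\]
So it suffices to bound $\mathbb E|Y_\theta|^p$ for each $\theta$ and integrate.

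\textbf{Step 3 (moment bound for $Y_\theta$), the main obstacle.} Apply the Meyer bound with
\[
v_s=(\theta-s)^{-\alpha}u_s\mathbf 1_{[0,\theta]}(s),\qquad D_rv_s=(\theta-s)^{-\alpha}D_ru_s\mathbf 1_{[0,\theta]}(s).
\]
The delicate point is to make the singular kernel $(\theta-s)^{-\alpha}$ compatible with the exponents $\frac1H$ and $pH$ in the target estimate. For the mean term, Hölder with exponents $pH$ and $(pH)'$ gives
\[
\Bigl(\int_0^\theta(\theta-s)^{-\alpha/H}|\mathbb Eu_s|^{1/H}ds\Bigr)^{pH}\le C\int_0^\theta|\mathbb Eu_s|^p\,ds .
\]
This needs $(\theta-s)^{-\alpha/H}\in L^{(pH)'}$, i.e.\ $\alpha<H-\frac1p$. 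So I would in fact choose $\alpha\in(\frac1p,H-\frac1p)$. That requires $p>\frac2H$ initially; the range $\frac1H<p\le\frac2H$ is then reached by a separate, more careful choice of exponents, for instance by splitting the weight between the two Hölder factors, or by interpolation. I expect this exponent bookkeeping to be the genuinely hard part. For the derivative term I apply the same Hölder step in the outer variable. This bounds it by
\[
C\,\mathbb E\int_0^T\Bigl(\int_0^T|D_ru_s|^{1/H}dr\Bigr)^{pH}ds .
\]

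\textbf{Step 4 (conclusion).} Integrate the Step 3 bound in $\theta\in[0,T]$ and combine it with Step 2. This yields (\ref{maximal_inequality}), with a constant depending on $p$, $H$ and $T$ through $\alpha$. The hypothesis $u\in\mathbb L_H^{1,p}$ ensures that every quantity above is finite, which also retroactively justifies the Fubini exchange in Step 1.
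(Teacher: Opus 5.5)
\textbf{Gap in Step 3 (for $\frac1H<p\le\frac2H$).} The paper does not prove this lemma; it quotes it from Al\`os--Nualart. Your overall route is the standard one used there: factorization with $\alpha\in(\frac1p,H)$, H\"older in $\theta$, and the $L^{1/H}$ form of Meyer's inequality for $\delta$. Steps 1 and 2 and the use of the Meyer bound are fine for a proposal. Step 3, however, has a genuine gap. You aim to bound $\mathbb E|Y_\theta|^p$, for each fixed $\theta$, by the unweighted quantity $C\bigl(\int_0^\theta|\mathbb E u_s|^p\,ds+\mathbb E\int_0^T(\int_0^T|D_ru_s|^{1/H}dr)^{pH}ds\bigr)$. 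For $\frac1H<p\le\frac2H$ this bound is false, so no choice of exponents or interpolation can deliver it.

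By duality, $(\int_0^\theta(\theta-s)^{-\alpha/H}f\,ds)^{pH}\le C\int_0^\theta f^{pH}ds$ holds for all $f\ge0$ iff $(\theta-\cdot)^{-\alpha/H}\in L^{(pH)'}$. That means $\alpha<H-\frac1p$, which is incompatible with $\alpha>\frac1p$ in this range. The failure is not an artifact of the Meyer bound either. Take the deterministic integrand $u=\mathbf 1_{[\theta-\epsilon,\theta]}$. Scaling gives $\mathbb E|Y_\theta|^p=c\,\epsilon^{p(H-\alpha)}$, while $\int_0^\theta|u_s|^p\,ds=\epsilon$. Since $p(H-\alpha)<pH-1\le1$, the ratio blows up as $\epsilon\to0$.

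\textbf{The fix.} Do not try to remove the singular weight at fixed $\theta$. Keep it, and let the $d\theta$-integral you already have from Step 2 absorb it. Because $pH\ge1$, and $(\theta-s)^{-\alpha/H}$ is integrable on $[0,\theta]$ exactly because $\alpha<H$, Jensen's inequality for this finite measure gives
\[
\Bigl(\int_0^\theta(\theta-s)^{-\alpha/H}f(s)\,ds\Bigr)^{pH}\le\Bigl(\frac{H\,\theta^{1-\alpha/H}}{H-\alpha}\Bigr)^{pH-1}\int_0^\theta(\theta-s)^{-\alpha/H}f(s)^{pH}\,ds .
\]
Apply this with $f(s)=|\mathbb E u_s|^{1/H}$, and, inside the expectation, with $f(s)=\int_0^T|D_ru_s|^{1/H}dr$. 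Then integrate in $\theta\in[0,T]$ and use Fubini together with $\int_s^T(\theta-s)^{-\alpha/H}d\theta\le\frac{H}{H-\alpha}T^{1-\alpha/H}$.

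This yields the lemma for every $\alpha\in(\frac1p,H)$, hence for the whole range $p>\frac1H$. It also explains why the natural constraint is $\alpha<H$ rather than $\alpha<H-\frac1p$. The corrected bound is finite after $\theta$-integration, so it is finite for a.e.\ $\theta$; this supplies the integrability you invoked for the Fubini exchange in Step 1. Your remark about splitting the weight between the two H\"older factors points in this direction. It only works once you accept a residual weight in the bound for $\mathbb E|Y_\theta|^p$ and move its integration into Step 4.
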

	
	\begin{remark}
		Throughout this paper, we adopt the notation $\dot{B}^H(t)$ to denote the formal derivative of fractional Brownian motion $B^H(t)$, even though it is well-known that for $H \neq \frac{1}{2}$, $B^H(t)$ is not a semimartingale and its sample paths are almost surely not differentiable. This notation is used in a formal sense to facilitate the presentation of stochastic differential equations driven by fractional Brownian motion.  This paper considers the case \( H \in (\frac{1}{2}, 1) \), If $f$ is non-random, any integral of the form \(\int_0^t f(s) \dot{B}^H(s)  ds\)  is to be understood pathwise as the Young integral \(\int_0^t f(s)  dB^H(s)\). This integral is well-defined due to the Hölder regularity of the sample paths of $B^H(t)$ and the solution processes considered (see in \cite{mishura2008stochastic}). If $f$ is random, the integral will be considered as a divergence type integral, and its upper bound can be estimated by \eqref{maximal_inequality}.
	\end{remark}
	
	\section{Smoluchowski-Kramers approximation}\label{result}
	In this section, we consider the following stochastic differential equation driven by fractional Brownian motion with additive noise

	\begin{equation}
		\begin{cases} 
			\mu \ddot{X}^{\mu}(t) + \dot{X}^{\mu}(t) = b(X^\mu(t)) + \mu^\alpha \dot{B}^{H}(t) \\ 
			X^\mu(0) = x_0, \quad \dot{X}^{\mu}(0) = y_0. 
		\end{cases}\label{wave}
	\end{equation}

	Formally,  the effective approximation model of (\ref{wave}) can also be obtained by dropping the $\mu \ddot{X}^{\mu}$ term, that is, 
	\begin{equation}
		\begin{cases} 
			 \dot{\bar{X}}^\mu(t) = b({\bar{X}}^\mu(t)) + \mu^\alpha \dot{B}^{H}(t), \\ 
			\bar{X}^\mu(0) = x_0.
		\end{cases}\label{ayeq}
	\end{equation}

	Rewrite the equation (\ref{wave}) as
	\begin{equation}
		\begin{cases}
			\dot{X}^{\mu}(t) = Y^\mu(t), \\ 
			\dot{Y}^{\mu}(t) = \mu^{-1} \left[ -Y^\mu(t) + b(X^\mu(t)) \right] + \mu^{\alpha-1} \dot{B}^{H}(t) ,\\ 
			X^\mu(0) = x_0, \quad Y^\mu(0) = y_0. 
		\end{cases}\label{slowfast}
	\end{equation}
	Equation (\ref{slowfast}) has a form of slow-fast system (see in Duan and Wang \cite{duan2014effective}). Inspired by a splitting technique introduced by Lv et al.\cite{wang2011approximation}, we make the following important decomposition, which makes the analysis to (\ref{slowfast}) considerably more clear	
	\begin{equation}
		\begin{cases}
			\dot{\bar{Y}}^\mu_1(t) = -\mu^{-1} \bar{Y}^\mu_1(t), \\ 
			\dot{\bar{Y}}^\mu_2(t)= -\mu^{-1} [\bar{Y}^\mu_2(t) - b(X^\mu(t))], \\ 
			\dot{\bar{Y}}^\mu_3(t)= -\mu^{-1} \bar{Y}^\mu_3(t) + \mu^{-H} \dot{B}^{H}(t), \\
			\bar{Y}^\mu_1(0) = \mu x_0, \quad \bar{Y}^\mu_2(0) = 0, \quad \bar{Y}^\mu_3(0) = 0. 
		\end{cases}\label{byeq}
	\end{equation}
	Direct calculation yields 
	\begin{equation}
			Y^\mu(t) = \mu^{-1} \bar{Y}^\mu_1(t) + \bar{Y}^\mu_2(t) + \mu^{\alpha+H-1} \bar{Y}^\mu_3(t). \label{decomposition}
	\end{equation}

	Our approximate result is the following theorem.
	
	\begin{thm}\label{mainresult}
		(i) Let $0<\alpha<1.$ Under assumption \textbf{(A1)},
		\begin{equation}
			\mathbb{E}\sup\limits_{0\leq t\leq T}|X^\mu(t)-\bar{X}^\mu(t)|\lesssim \mu^\alpha.
		\end{equation} 	
		(ii) Let $\alpha=0.$ Under assumption \textbf{(A1)},
		\begin{equation}
			\lim_{\mu \to 0} \sup\limits_{0\leq t\leq T} \mathbb{E} |X^\mu(t)-\bar{X}^\mu(t)| = 0.
		\end{equation}
	\end{thm}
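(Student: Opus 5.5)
The plan is to avoid analysing the fast variable on its own as far as possible. Instead, I would integrate the second-order equation (\ref{wave}) once in time, so that $X^\mu$ satisfies a first-order integral equation that differs from (\ref{ayeq}) only by the boundary term $\mu(Y^\mu(t)-y_0)$. Integrating $\mu\dot Y^\mu+\dot X^\mu=b(X^\mu)+\mu^\alpha\dot B^H$ over $[0,t]$ gives
\begin{equation*}
X^\mu(t)=x_0+\int_0^t b(X^\mu(s))\,ds+\mu^\alpha B^H(t)-\mu\bigl(Y^\mu(t)-y_0\bigr),
\end{equation*}
while $\bar X^\mu(t)=x_0+\int_0^t b(\bar X^\mu(s))\,ds+\mu^\alpha B^H(t)$. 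The noise terms cancel exactly; this is the point of using the same $\mu^\alpha$ in both equations. By \textbf{(A1)} and Gronwall's inequality,
\begin{equation*}
\sup_{0\le t\le T}|X^\mu(t)-\bar X^\mu(t)|\le e^{LT}\Bigl(\mu|y_0|+\sup_{0\le t\le T}\mu|Y^\mu(t)|\Bigr).
\end{equation*}
Both parts of Theorem \ref{mainresult} therefore reduce to showing $\mathbb{E}\sup_{t}\mu|Y^\mu(t)|\lesssim\mu^\alpha$ for $0<\alpha<1$, and $\to0$ for $\alpha=0$. The sup-inside-expectation bound implies the $\sup_t\mathbb{E}$ statement of (ii).

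To bound $\mu Y^\mu$ I would use the decomposition (\ref{decomposition}), $\mu Y^\mu=\bar Y^\mu_1+\mu\bar Y^\mu_2+\mu^{\alpha+H}\bar Y^\mu_3$. I read the initial datum of $\bar Y_1^\mu$ as $\mu y_0$, which is what is needed to match $Y^\mu(0)=y_0$. The three terms are handled as follows.
\begin{itemize}
\item First term: $\bar Y^\mu_1(t)=\mu y_0e^{-t/\mu}$ is $O(\mu)$ uniformly in $t$.
\item Second term: $\bar Y^\mu_2(t)=\mu^{-1}\int_0^te^{-(t-s)/\mu}b(X^\mu(s))ds$, so $|\mu\bar Y_2^\mu(t)|\le\mu\sup_s|b(X^\mu(s))|\le\mu(|b(x_0)|+L\sup_s|X^\mu(s)-x_0|)$.
\item Third term: handled in the next paragraph.
\end{itemize}
To control $\sup|X^\mu|$ without circularity, I would use the integral identity above together with Gronwall to get $\sup_t|X^\mu|\le C(1+\mu^\alpha\sup_t|B^H|+\sup_t\mu|Y^\mu|)$. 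Substituting this into the bound for $\mu\bar Y_2^\mu$ produces a term $CL\mu\sup_t\mu|Y^\mu|$, which can be absorbed into the left-hand side for $\mu$ small. Hence it suffices to bound $\mathbb{E}\sup_t\mu^{\alpha+H}|\bar Y_3^\mu(t)|$ and to use $\mathbb{E}\sup|B^H|<\infty$, for instance from (\ref{moment_inequality_fBm}) with $f\equiv1$.

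The main obstacle is the fractional Ornstein--Uhlenbeck term $\bar Y^\mu_3(t)=\mu^{-H}\int_0^te^{-(t-s)/\mu}dB^H(s)$, where a uniform-in-time bound with an explicit power of $\mu$ is needed. I would handle it pathwise. Since the integrand is deterministic the integral is a Young integral, and integration by parts gives
\begin{equation*}
\bar Y^\mu_3(t)=\mu^{-H}e^{-t/\mu}B^H(t)+\mu^{-H-1}\int_0^te^{-(t-s)/\mu}\bigl(B^H(t)-B^H(s)\bigr)ds.
\end{equation*}
Fix $0<\delta<H$ and let $\|B^H\|_{H-\delta}$ be the (integrable) Hölder seminorm on $[0,T]$. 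Using $|B^H(t)|\le\|B^H\|_{H-\delta}t^{H-\delta}$ and $\sup_{t\ge0}(t/\mu)^{H-\delta}e^{-t/\mu}<\infty$ for the first term, and $\int_0^\infty e^{-r/\mu}r^{H-\delta}dr=C\mu^{1+H-\delta}$ for the second, both pieces are bounded by $C\mu^{-\delta}\|B^H\|_{H-\delta}$. Hence
\begin{equation*}
\mathbb{E}\sup_{t}\mu^{\alpha+H}|\bar Y^\mu_3(t)|\lesssim\mu^{\alpha+H-\delta}\le\mu^\alpha .
\end{equation*}
(Alternatively, self-similarity and Gaussian maximal inequalities give the sharper factor $\mu^{\alpha+H}\sqrt{\log(1/\mu)}$.) Combining the three pieces yields $\mathbb{E}\sup_t\mu|Y^\mu|\lesssim\mu+\mu^{1+\alpha}+\mu^{\alpha+H-\delta}\lesssim\mu^\alpha$, which gives (i). For $\alpha=0$ the same chain gives a bound of order $\mu^{H-\delta}\to0$, and hence (ii).
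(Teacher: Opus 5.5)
Your proposal is correct. It rests on the same identity as the paper. Since $\mu^{-1}\int_0^t\bar Y^\mu_1\,ds=\mu y_0-\bar Y^\mu_1(t)$, the paper's \eqref{newequationforx} combined with \eqref{newJ3} is exactly your once-integrated equation $X^\mu(t)=x_0+\int_0^t b(X^\mu)\,ds+\mu^\alpha B^H(t)-\mu(Y^\mu(t)-y_0)$. Up to regrouping the $y_0$ terms, the paper's $J^\mu_1,J^\mu_3,J^\mu_4$ are the three pieces of $\mu Y^\mu$ from \eqref{decomposition}. You are also right that $\bar Y^\mu_1(0)$ must be $\mu y_0$, which is what Lemma \ref{esforby1} actually uses. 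The genuine differences lie in the two estimates that feed this identity.

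\emph{Bound on $\sup_t|X^\mu|$.} The paper proves a uniform moment bound through the linear part $x^\mu$ (Lemma \ref{thmforlinearEsup}, Proposition \ref{thmforEsup}). You instead absorb pathwise, which is shorter but only valid for $\mu\le\mu_0$, with $\mu_0$ fixed by $CL\mu_0\le\frac12$. To get $\lesssim\mu^\alpha$ on all of $(0,1]$ you should add a routine crude Gronwall bound on $\sup_{s\le t}|Y^\mu(s)|$ for $\mu\in[\mu_0,1]$.

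\emph{The fractional Ornstein--Uhlenbeck term $\bar w^\mu=\mu^H\bar Y^\mu_3$.} The paper derives \eqref{esforbwe} by applying \eqref{moment_inequality_fBm} to $\int_0^t e^{-(t-s)/\mu}\,dB^H(s)$. That inequality concerns $\int_0^t f(s)\,dB^H(s)$ with $f$ independent of $t$, so the supremum cannot be pulled outside the $L^{1/H}$ norm as is done there. Indeed, the intermediate bound $C_H\mu H$ cannot hold, because $\mathbb{E}|\bar w^\mu(t)|$ is of order $\mu^H$ for fixed $t>0$. Your Young integration by parts, combined with the integrable H\"older seminorm of $B^H$, gives a rigorous pathwise bound $\sup_t|\bar w^\mu(t)|\lesssim\mu^{H-\delta}\|B^H\|_{H-\delta}$. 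This is precisely the estimate the theorem depends on: in the paper, \eqref{esforbwe} enters $J^\mu_4$ in part (i) directly, and $J^\mu_3$ in both parts through Lemma \ref{thmforlinearEsup} and Proposition \ref{thmforEsup}.

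\emph{What your route buys.} It is entirely pathwise and yields more than the statement. In (i) you get the rate $\mu^{\min(1,\alpha+H-\delta)}$ rather than $\mu^\alpha$. For $\alpha=0$ you get convergence of $\mathbb{E}\sup_t|X^\mu-\bar X^\mu|$, with the supremum inside the expectation, at rate $\mu^{H-\delta}$. This contrasts with the paper's comment that no explicit rate can be derived in that case.
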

	\begin{remark}
		Obviously, part (i) of Theorem \ref{mainresult} does not give a convergence result in the case $\alpha=0$. But we can get an explicit estimate for the convergence rate in the total variation distance between $X^\mu(t)$ and $\bar{X}^\mu(t)$.\cite[Theorem3.1]{son2020rate}
		$$d_{\text{TV}}(X^\mu(t);\bar{X}^\mu(t)) \leq C t^{-H} \mu^H.$$
	\end{remark}
	
	To prove this approximation result, we establish several moment estimates. We start with a well-posedness result. 
	\begin{lemma}\label{lemmaexistence}
		For each $0<\mu\leq 1$, the equation (\ref{slowfast}) admits a unique strong solution.
	\end{lemma}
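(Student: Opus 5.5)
The plan is to treat \eqref{slowfast} as a random ordinary differential equation, using the fact that the noise is additive. Let $Z^\mu(t)$ be the solution of the linear equation
\[
\dot Z^\mu(t) = -\mu^{-1} Z^\mu(t) + \mu^{\alpha-1}\dot B^H(t), \qquad Z^\mu(0)=0,
\]
given explicitly by the pathwise Young integral
\[
Z^\mu(t)=\mu^{\alpha-1}\int_0^t e^{-(t-s)/\mu}\,dB^H(s).
\]
This integral is well defined for $H>\frac12$ because the integrand is deterministic and smooth in $s$. Integration by parts gives
\[
Z^\mu(t)=\mu^{\alpha-1}\Big(B^H(t)-\mu^{-1}\int_0^t e^{-(t-s)/\mu}B^H(s)\,ds\Big),
\]
so $Z^\mu$ has continuous paths almost surely. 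In the notation of \eqref{byeq}, $Z^\mu=\mu^{\alpha+H-1}\bar Y_3^\mu$.

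Next I set $V^\mu := Y^\mu - Z^\mu$. Then $(X^\mu,V^\mu)$ solves, $\omega$ by $\omega$, the system
\[
\dot X^\mu = V^\mu + Z^\mu(t),\qquad \dot V^\mu = -\mu^{-1}V^\mu + \mu^{-1} b(X^\mu),
\]
with initial data $(x_0,y_0)$. This is an ordinary differential equation with a continuous, time-dependent forcing term $Z^\mu(\cdot,\omega)$. By \textbf{(A1)}, the vector field $(x,v)\mapsto (v+Z^\mu(t),\,-\mu^{-1}v+\mu^{-1}b(x))$ is globally Lipschitz in $(x,v)$, with constant $C(1+\mu^{-1})$ independent of $t$ and $\omega$. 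It is also continuous in $t$. The Picard--Lindelöf theorem with global Lipschitz bound then gives, for each fixed $\mu\in(0,1]$ and each $\omega$, a unique global solution on $[0,T]$. A Gronwall argument rules out blow-up, using $|b(x)|\le |b(0)|+L|x|$.

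Adaptedness is the remaining point. $Z^\mu$ is adapted, since it is a functional of $B^H$ on $[0,t]$. Picard iterates built from adapted continuous processes are adapted, and they converge uniformly on $[0,T]$, so the limit is adapted; a continuous adapted process is also progressively measurable. Setting $Y^\mu=V^\mu+Z^\mu$ gives a strong solution of \eqref{slowfast}.

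For uniqueness, suppose $(X,Y)$ and $(X',Y')$ are two solutions of \eqref{slowfast} driven by the same path. Their difference solves the noiseless system $\dot{\Delta X}=\Delta Y$, $\dot{\Delta Y}=-\mu^{-1}\Delta Y+\mu^{-1}(b(X)-b(X'))$, with zero initial data, because the noise term is the same for both and cancels. Gronwall's inequality then gives $\Delta\equiv 0$ pathwise.

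The only delicate step is making precise the meaning of a solution to \eqref{slowfast}. I would define it as a solution of the integral form, in which the noise enters only through $\mu^{\alpha-1}B^H(t)$ and the equation for $Y^\mu$ reads
\[
Y^\mu(t)=y_0+\mu^{-1}\int_0^t\big(-Y^\mu+b(X^\mu)\big)\,ds+\mu^{\alpha-1}B^H(t).
\]
With this definition no stochastic integral of a random integrand is needed, and the substitution $V^\mu=Y^\mu-\mu^{\alpha-1}B^H$ alone already reduces the problem to a random ordinary differential equation. I would therefore present that substitution, and use $Z^\mu$ only to record the decomposition \eqref{decomposition}.
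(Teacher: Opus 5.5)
Your argument is correct, but it follows a different route from the paper. The paper writes $\psi^\mu=(X^\mu,Y^\mu)^T$ in the form $d\psi^\mu=[\mathcal{A}^\mu\psi^\mu+\mathcal{F}^\mu(\psi^\mu)]\,dt+\Sigma^\mu\,dB^H$. It observes that the drift is globally Lipschitz and that $\Sigma^\mu$ is constant, and then cites the general pathwise existence and uniqueness theorem \cite[Theorems 2.1 and 5.1]{rascanu2002differential} for fBm-driven SDEs with $H>\frac12$.

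You instead use the fact that the noise is additive. Subtracting $\mu^{\alpha-1}B^H$ (or the Ornstein--Uhlenbeck-type process $Z^\mu$) turns \eqref{slowfast} into an ordinary differential equation with continuous random forcing and a globally Lipschitz vector field. Picard--Lindel\"of and Gronwall then apply $\omega$ by $\omega$.

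What your route buys:
\begin{itemize}
\item It is self-contained and avoids the fractional-calculus machinery behind the cited theorem.
\item It makes adaptedness, via the Picard iterates, and pathwise uniqueness explicit; the paper's one-line citation leaves both implicit.
\item It shows that only continuity and adaptedness of the driving path are used, so $H>\frac12$ plays no role in this lemma. Even your Young integral for $Z^\mu$ is just a Riemann--Stieltjes integral defined by integration by parts for any continuous path.
\end{itemize}

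What the paper's route buys: the citation is shorter, and it is the template the paper reuses (``Similar to Lemma~\ref{lemmaexistence}'') for the multiplicative system \eqref{fast}. There, subtracting a fixed process no longer removes the noise term $\sigma(X^{\mu,\varepsilon})\,dB^H$. Your reduction would then have to be replaced by a genuine pathwise fixed-point argument involving Stieltjes or Young integrals of $\sigma(X^{\mu,\varepsilon})$.
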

	\begin{proof}
		Let $\psi^\mu:=\begin{pmatrix}
			X^\mu\\
			Y^\mu
		\end{pmatrix}$. The equation (\ref{slowfast}) can be rewritten as
		\begin{equation*}
                d\psi^\mu(t) = [\mathcal{A}^\mu\psi^\mu(t) + \mathcal{F}^\mu(\psi^\mu(t))]dt + \Sigma^\mu dB^H(t),
            \end{equation*}
		where 
		$\mathcal{A}^\mu := \begin{pmatrix}
			0 & Id\\
			0 & -\mu^{-1}Id
		\end{pmatrix}$, 
		$\mathcal{F}^\mu\begin{pmatrix}
			x\\
			y
		\end{pmatrix} := \begin{pmatrix}
			0\\
			\mu^{-1}b(x)
		\end{pmatrix}$,
		and 
		$\Sigma^\mu := \begin{pmatrix}
			0\\
			\mu^{\alpha-1}Id
		\end{pmatrix}$.
	
		Under assumption \textbf{(A1)}, $b$ is Lipschitz continuous, $\mathcal{A}^\mu+\mathcal{F}^\mu$ is also Lipschitz continuous. The diffusion coefficient $\Sigma^\mu$ is constant. The existence and uniqueness of a pathwise solution for such multidimensional SDEs driven by a fractional Brownian motion with Hurst parameter $H > \frac{1}{2}$ are established by\cite[Theorems 2.1 and 5.1]{rascanu2002differential}. Therefore, the conclusion follows.
	\end{proof}
	In order to give moment estimate for $X^\mu$, we first consider the linear part $x^\mu$ of (\ref{wave}), that is
	\begin{equation}\label{linearwave}
	\mu\ddot{x}^\mu+\dot{x}^\mu=\mu^\alpha\dot{B}^H,\quad x^\mu(0)=0, \dot{x}^\mu(0)=0.
	\end{equation}
	Similar to (\ref{slowfast}), we rewrite it as
	\begin{equation*}
            \begin{cases}
                \dot{x}^\mu=y^\mu,\quad x^\mu(0)=0,\\
                \dot{y}^\mu=-\mu^{-1}y^\mu+\mu^{\alpha-1}\dot{B}^H,\quad y^\mu(0)=0.
            \end{cases}
        \end{equation*}
	
	\begin{lemma}\label{thmforlinearEsup}
		For any $0 \leq t \leq T$ and $0 < \mu \leq 1$, let $x^\mu(t)$ denote the stochastic process defined by equation (\ref{linearwave}). Then the following uniform bound holds
        $$\sup\limits_{0<\mu\leq 1}\mathbb{E}\sup\limits_{0\leq t\leq T}|x^\mu(t)|<\infty.$$
	\end{lemma}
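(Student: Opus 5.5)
The plan is to solve the linear system explicitly and reduce everything to the moment inequality (\ref{moment_inequality_fBm}) for Wiener-type integrals with deterministic integrands. Since the noise is additive and the integrands below are deterministic, every integral is a pathwise Young integral. The equation for $y^\mu$ is linear with zero initial condition, so by variation of constants
\begin{equation*}
y^\mu(t)=\mu^{\alpha-1}\int_0^t e^{-(t-s)/\mu}\,dB^H(s).
\end{equation*}
Integrating in time and exchanging the order of integration (Fubini for Young integrals with smooth deterministic kernels, or integration by parts) gives
\begin{equation*}
x^\mu(t)=\int_0^t y^\mu(r)\,dr=\mu^{\alpha-1}\int_0^t\Big(\int_s^t e^{-(r-s)/\mu}dr\Big)dB^H(s)=\mu^{\alpha}\int_0^t\big(1-e^{-(t-s)/\mu}\big)\,dB^H(s).
\end{equation*}
So $x^\mu(t)=\mu^\alpha B^H(t)-\mu^\alpha\int_0^t e^{-(t-s)/\mu}dB^H(s)$. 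As $0<\mu\le 1$ and $\alpha\ge0$, the factor $\mu^\alpha\le 1$, and it suffices to bound the two terms uniformly in $\mu$.

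For the first term, $\mathbb{E}\sup_{t\le T}|B^H(t)|<\infty$ is standard; it also follows from (\ref{moment_inequality_fBm}) with $f\equiv1$. For the second term, the integrand depends on $t$, so (\ref{moment_inequality_fBm}) does not apply directly. I would remove the $t$-dependence by integration by parts:
\begin{equation*}
Z^\mu(t):=\int_0^t e^{-(t-s)/\mu}dB^H(s)=B^H(t)-\frac1\mu\int_0^t e^{-(t-s)/\mu}B^H(s)\,ds .
\end{equation*}
Then
\begin{equation*}
|Z^\mu(t)|\le |B^H(t)|+\sup_{s\le T}|B^H(s)|\cdot\frac1\mu\int_0^t e^{-(t-s)/\mu}ds\le 2\sup_{s\le T}|B^H(s)|,
\end{equation*}
because $\frac1\mu\int_0^t e^{-(t-s)/\mu}ds=1-e^{-t/\mu}\le 1$. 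Combining the two bounds gives $\mathbb{E}\sup_{t\le T}|x^\mu(t)|\le 3\,\mathbb{E}\sup_{t\le T}|B^H(t)|<\infty$, uniformly in $\mu\in(0,1]$.

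The main obstacle is justifying the explicit representation, namely variation of constants and the Fubini/integration-by-parts step for the Young integral. Both are valid pathwise because $B^H$ is Hölder continuous of any order below $H>\frac12$ and the kernels are $C^1$. A second point of care is the supremum over $t$ with a $t$-dependent integrand, which the integration-by-parts step handles cleanly. If one preferred to avoid this step, an alternative is to write $Z^\mu(t)=e^{-t/\mu}\int_0^t e^{s/\mu}dB^H(s)$ and apply (\ref{moment_inequality_fBm}). However, the $L_{1/H}$ norm of $e^{s/\mu}$ blows up as $\mu\to0$, so that route is not uniform in $\mu$. The pathwise integration-by-parts bound is therefore the route I would commit to.
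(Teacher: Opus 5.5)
Your proof is correct. It follows the same skeleton as the paper's but handles the key term in a genuinely different, and sounder, way.

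The paper also reduces to $x^\mu(t)=\mu^\alpha\bigl(B^H(t)-\bar{w}^\mu(t)\bigr)$ with $\bar{w}^\mu(t)=\int_0^t e^{-(t-s)/\mu}\,dB^H(s)$, which is your $Z^\mu$. It reaches this identity without Fubini, by integrating the equation for $w^\mu$ once in time; equivalently, integrating $\mu\dot{y}^\mu=-y^\mu+\mu^\alpha\dot{B}^H$ gives $x^\mu=\mu^\alpha B^H-\mu y^\mu$ directly. That is a bit slicker than your variation-of-constants derivation, but it lands in the same place.

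The real difference is how $\sup_t|\bar{w}^\mu(t)|$ is controlled. The paper applies \eqref{moment_inequality_fBm} directly to the $t$-dependent integrand $e^{-(t-s)/\mu}$ and reports the bound $C_H\mu H$. As you anticipated, that inequality concerns $\sup_t|\int_0^t f\,dB^H|$ for a fixed $f$, so it does not cover this case. The $L_{1/H}$ norm is also missing its exponent $H$. The resulting order $\mu$ cannot be right: already $\mathbb{E}|\bar{w}^\mu(T)|$ is of order $\mu^H\gg\mu$, since the stationary variance per coordinate of this fractional Ornstein--Uhlenbeck process is $H\Gamma(2H)\mu^{2H}$.

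Your pathwise integration by parts writes $\bar{w}^\mu(t)$ as $B^H(t)$ minus an exponential average of $B^H$, which avoids this problem entirely. It gives the almost sure bound $\sup_t|x^\mu(t)|\le 3\mu^\alpha\sup_t|B^H(t)|$, and in fact with constant $1$, since $x^\mu(t)=\mu^{\alpha-1}\int_0^t e^{-(t-s)/\mu}B^H(s)\,ds$. That yields uniform bounds for every moment. What your route does not give is a decay rate for $\bar{w}^\mu$ itself, which the paper's estimate was (incorrectly) claiming. The lemma only asks for uniform boundedness, and your argument proves exactly that rigorously.
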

	\begin{proof}
		Set $w^\mu$ to be the solution of the following linear SDE
		\begin{equation}\label{defofw}
			\dot{w}^\mu=-\mu^{-1}w^\mu+\mu^{-H}\dot{B}^H,\quad w^\mu(0)=0,
		\end{equation} 
		 and it is straightforward to check that $y^\mu=\mu^{\alpha+H-1}w^\mu,$ so
		$$x^\mu(t)=\int_{0}^{t}y^\mu(s)ds=\mu^{\alpha+H-1}\int_{0}^{t}w^\mu(s)ds.$$
		By (\ref{defofw}) the definition of $w^\mu$, we have
		$$w^\mu(t)=-\mu^{-1}\int_{0}^{t}w^\mu(s)ds+\mu^{-H}B^H(t).$$
		Multiplying $\mu^{\alpha+H}$ and rearranging,
		$$\mu^{\alpha+H-1}\int_{0}^{t}w^\mu(s)ds=\mu^{\alpha}(B^H(t)-\mu^{H}w^\mu(t)),$$
		so that 
		$$\mathbb{E}\sup\limits_{0\leq t\leq T}|x^\mu(t)|=\mu^{\alpha}\mathbb{E}\sup\limits_{0\leq t\leq T}|B^H(t)-\mu^{H}w^\mu(t)|.$$
		Set $\bar{w}^\mu:=\mu^{H}w^\mu$, and one verifies immediately that
		\begin{equation}\label{sdew}
		d\bar{w}^\mu(t)=-\mu^{-1}\bar{w}^\mu(t)dt+dB^H(t),\quad \bar{w}^\mu(0)=0.
		\end{equation}
		The proposition is proved provided that we show
		\begin{equation}\label{esforbwe}
			\sup\limits_{0<\mu\leq 1}\mathbb{E}\sup\limits_{0\leq t\leq T}|\bar{w}^\mu(t)|<\infty,
		\end{equation}
		 and
		 \begin{equation}\label{esforL}
		 	\mathbb{E}\sup\limits_{0\leq t\leq T}|B^H(t)|<\infty.
		 \end{equation}
		Let us show (\ref{esforbwe}) first. Indeed, from (\ref{sdew}), we have
		$$\bar{w}^\mu(t)=\int_0^t e^{{-\mu}^{-1} (t-s)}dB^H(s),$$
		from the inequality (\ref{moment_inequality_fBm}), we have
            \begin{align*}
                \mathbb{E} \sup_{0 \leq t \leq T} \left| \bar{w}^\mu(t) \right| 
                &\leq C_H \sup_{0 \leq t \leq T} \left( \int_0^t \left| e^{{-\mu}^{-1} (t-s)} \right|^{\frac{1}{H}}  ds \right) \\
                &= C_H \sup_{0 \leq t \leq T} \left[ \mu H (1 - e^{-\frac{t}{\mu H}}) \right] \\
                &\leq C_H \cdot \mu H.
            \end{align*}
		We have proved (\ref{esforbwe}). Then let us prove (\ref{esforL}). Due to the fact that $B^H$ is k-Hölder   Continuous,where $0< k< H$, for all $0\leq t\leq T$, and $0< T< \infty$,
		\begin{equation*}
                \mathbb{E}\sup\limits_{0\leq t\leq T}|B^H(t)|< C \cdot T^k.
            \end{equation*}
		This finishes the proof.
	\end{proof}
	We give the moment estimate for $X^\mu$ with the help of Lemma \ref{thmforlinearEsup}.
	\begin{prop}\label{thmforEsup}
		For any $0 \leq t \leq T$ and $0 < \mu \leq 1$, let $X^\mu(t)$ denote the stochastic process defined by equation (\ref{wave}). Then the following uniform bound holds
        $$\sup\limits_{0<\mu\leq 1}\mathbb{E}\sup\limits_{0\leq t\leq T}|X^\mu(t)|<\infty.$$
	\end{prop}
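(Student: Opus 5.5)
We reduce the nonlinear equation (\ref{wave}) to the linear one (\ref{linearwave}) by subtracting off the noise part, and then apply a Gronwall argument that is uniform in $\mu$. Set $Z^\mu:=X^\mu-x^\mu$, where $x^\mu$ solves (\ref{linearwave}). Since the noise is additive, $Z^\mu$ satisfies the random ODE
\[
\mu\ddot Z^\mu(t)+\dot Z^\mu(t)=b\bigl(Z^\mu(t)+x^\mu(t)\bigr),\qquad Z^\mu(0)=x_0,\ \dot Z^\mu(0)=y_0 .
\]
This equation can be solved pathwise. We treat it as a first-order linear equation for $\dot Z^\mu$ with forcing $\mu^{-1}b(\cdot)$, which gives
\[
\dot Z^\mu(t)=e^{-t/\mu}y_0+\frac1\mu\int_0^t e^{-(t-s)/\mu}\,b\bigl(X^\mu(s)\bigr)\,ds .
\]
Integrating in $t$ and using Fubini,
\[
Z^\mu(t)=x_0+\mu\bigl(1-e^{-t/\mu}\bigr)y_0+\int_0^t\bigl(1-e^{-(t-s)/\mu}\bigr)\,b\bigl(X^\mu(s)\bigr)\,ds .
\]
The kernel $1-e^{-(t-s)/\mu}$ lies in $[0,1]$ for every $\mu>0$. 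This is the key point: the singular factor $\mu^{-1}$ has been absorbed, so the bounds below do not depend on $\mu$.

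Next we use \textbf{(A1)} in the form $|b(x)|\le |b(0)|+L|x|$. With $\mu\le1$ this yields
\[
|X^\mu(t)|\le |x^\mu(t)|+|x_0|+|y_0|+T|b(0)|+L\int_0^t |X^\mu(s)|\,ds .
\]
Put $M^\mu:=\sup_{0\le t\le T}|x^\mu(t)|$. Then $\phi(t):=\sup_{r\le t}|X^\mu(r)|$ satisfies
\[
\phi(t)\le M^\mu+C_0+L\int_0^t\phi(s)\,ds ,
\]
where $C_0$ does not depend on $\mu$. Gronwall's inequality, applied pathwise, gives
\[
\sup_{0\le t\le T}|X^\mu(t)|\le \bigl(M^\mu+C_0\bigr)e^{LT}.
\]

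Taking expectations gives $\mathbb{E}\sup_{t}|X^\mu(t)|\le e^{LT}\bigl(\mathbb{E}M^\mu+C_0\bigr)$. Lemma \ref{thmforlinearEsup} states exactly that $\sup_{0<\mu\le1}\mathbb{E}M^\mu<\infty$, so taking the supremum over $\mu\in(0,1]$ completes the proof.

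The step needing the most care is the pathwise justification. We must check that the decomposition $X^\mu=x^\mu+Z^\mu$ is legitimate, which holds because the noise is additive and the solution is unique by Lemma \ref{lemmaexistence}. We must also check that the variation-of-constants formula holds path by path, which is a purely deterministic computation once $X^\mu$ is known to be continuous. Beyond that, the only subtlety is making sure no hidden $\mu^{-1}$ survives, and the bound $0\le 1-e^{-(t-s)/\mu}\le 1$ handles it.
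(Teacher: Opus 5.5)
Your proposal is correct and follows the paper's proof essentially step for step. Both arguments use the same splitting $X^\mu = x^\mu + \rho^\mu$, variation of constants for the velocity $\dot\rho^\mu$, a Fubini computation that turns the $\mu^{-1}$-weighted double integral into the kernel $1-e^{-(t-s)/\mu}\in[0,1]$, and Gronwall combined with Lemma \ref{thmforlinearEsup}. The differences are cosmetic: you apply Gronwall pathwise to $\sup_{r\le t}|X^\mu(r)|$ rather than in expectation to $\rho^\mu$, and you keep the initial-data terms $x_0$ and $\mu(1-e^{-t/\mu})y_0$, which the paper's displayed formulas for $\xi^\mu$ and $\rho^\mu$ silently drop (harmlessly, since these terms are bounded uniformly in $\mu\le 1$).
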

	\begin{proof}
		Set $\rho^\mu:=X^\mu-x^\mu$. It follows from (\ref{wave}) and (\ref{linearwave}) that
		\begin{equation*}
                \mu\ddot{\rho}^\mu+\dot{\rho}^\mu=b(x^\mu+\rho^\mu),\quad \rho^\mu(0)=x_0,\dot{\rho}^\mu(0)=y_0,
            \end{equation*}
		and we can rewrite it as
		\begin{equation}\label{rdeslowfast}
			\begin{cases}
				\dot{\rho}^\mu=\xi^\mu,\rho^\mu(0)=x_0,\\
				\dot{\xi}^\mu=\mu^{-1}(-\xi^\mu+b(x^\mu+\rho^\mu)), \xi^\mu(0)=y_0.
			\end{cases}
		\end{equation}
		From (\ref{rdeslowfast}), we can solve $\xi^\mu$ analytically as
		$$\xi^\mu(t)=\mu^{-1}e^{-\mu^{-1}t}\int_{0}^{t} e^{\mu^{-1}s}(b(x^\mu(s)+\rho^\mu(s)))ds,$$
		so that by assumption \textbf{(A1)},
		\begin{align*}
                |\xi^\mu(t)| 
            &\leq \mu^{-1}e^{-\mu^{-1}t}\int_{0}^{t} e^{\mu^{-1}s}|b(x^\mu(s)+\rho^\mu(s))|ds \\
            &\lesssim \mu^{-1}e^{-\mu^{-1}t}\int_{0}^{t} e^{\mu^{-1}s}|x^\mu(s)|ds + \mu^{-1}e^{-\mu^{-1}t}\int_{0}^{t} e^{\mu^{-1}s}|\rho^\mu(s)|ds \\
            &\quad + \mu^{-1}e^{-\mu^{-1}t}\int_{0}^{t} e^{\mu^{-1}s}ds \\
            &\leq \mu^{-1}e^{-\mu^{-1}t}\int_{0}^{t} e^{\mu^{-1}s}\sup_{0\leq t\leq T}|x^\mu(t)|ds + \mu^{-1}e^{-\mu^{-1}t}\int_{0}^{t} e^{\mu^{-1}s}|\rho^\mu(s)|ds + 1 \\
            &\leq \sup_{0\leq t\leq T}|x^\mu(t)| + \mu^{-1}e^{-\mu^{-1}t}\int_{0}^{t} e^{\mu^{-1}s}|\rho^\mu(s)|ds + 1.
            \end{align*}
        
		Since $\dot{\rho}^\mu=\xi^\mu,$ we can continue our estimate as
		\begin{align*}
                |\rho^\mu(t)| 
            &\leq \int_{0}^{t}|\xi^\mu(s)|ds \\
            &\lesssim t\sup_{0\leq t\leq T}|x^\mu(t)| + \int_{0}^{t}\mu^{-1}e^{-\mu^{-1}s}\int_{0}^{s} e^{\mu^{-1}r}|\rho^\mu(r)|drds + 1 \\
            &= t\sup_{0\leq t\leq T}|x^\mu(t)| + \mu^{-1}\int_{0}^{t}\int_{0}^{s} e^{-\mu^{-1}(s-r)}|\rho^\mu(r)|drds + 1 \\
            &= t\sup_{0\leq t\leq T}|x^\mu(t)| + \mu^{-1}\int_{0}^{t}\int_{r}^{t} e^{-\mu^{-1}(s-r)}|\rho^\mu(r)|dsdr + 1 \\
            &= t\sup_{0\leq t\leq T}|x^\mu(t)| + \mu^{-1}\int_{0}^{t} e^{\mu^{-1}r}|\rho^\mu(r)|\int_{r}^{t}e^{-\mu^{-1}s}dsdr + 1 \\
            &= t\sup_{0\leq t\leq T}|x^\mu(t)| + \mu^{-1}\int_{0}^{t} e^{\mu^{-1}r}|\rho^\mu(r)|(\mu e^{-\mu^{-1}r}-\mu e^{-\mu^{-1}t})dr + 1 \\
            &\leq t\sup_{0\leq t\leq T}|x^\mu(t)| + \int_{0}^{t}|\rho^\mu(r)|dr + 1.
            \end{align*}
		Taking supremum with respect to $t$,
		\begin{align*}
                \sup_{0\leq t\leq T}|\rho^\mu(t)| 
            &\lesssim T\sup_{0\leq t\leq T}|x^\mu(t)| + \int_0^T|\rho^\mu(r)|dr + 1 \\
            &\lesssim T\sup_{0\leq t\leq T}|x^\mu(t)| + \int_0^T\sup_{0\leq s\leq t}|\rho^\mu(s)|dt + 1.
            \end{align*}
		Taking expectation and applying Lemma \ref{thmforlinearEsup},
		\begin{eqnarray*}
			\mathbb{E}\sup\limits_{0\leq t\leq T}|\rho^\mu(t)|\lesssim\int_0^T\mathbb{E}\sup_{0\leq s\leq t}|\rho^\mu(s)|dt+1.
		\end{eqnarray*}
		By Gronwall inequality,
		\begin{equation}\label{esforre}
			\mathbb{E}\sup\limits_{0\leq t\leq T}|\rho^\mu(t)|\lesssim1.
		\end{equation}
		Since $X^\mu=x^\mu+\rho^\mu,$ our proof is finished by combining (\ref{esforre}) and Lemma \ref{thmforlinearEsup}.
	\end{proof}
	To prove Theorem \ref{mainresult}, We begin with treating velocity part $Y^\mu$.
	\begin{lemma}
		For any $0 \leq t \leq T$ and $0 < \mu \leq 1$, the process $\bar{Y}^\mu_1(t)$ satisfies the following estimate
        $$\mathbb{E}\sup\limits_{0\leq t\leq T}|\mu^{-1}\int_{0}^{t}\bar{Y}^\mu_1(s)ds|\lesssim \mu.$$\label{esforby1}
	\end{lemma}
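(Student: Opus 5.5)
The plan is to solve the first equation of (\ref{byeq}) explicitly and reduce the lemma to a one-line deterministic estimate. The equation $\dot{\bar{Y}}^\mu_1=-\mu^{-1}\bar{Y}^\mu_1$ with $\bar{Y}^\mu_1(0)=\mu x_0$ is a linear ODE with no randomness, so
\[
\bar{Y}^\mu_1(t)=\mu x_0\, e^{-\mu^{-1}t}, \qquad 0\le t\le T.
\]
Consequently $\mu^{-1}\int_0^t\bar{Y}^\mu_1(s)\,ds$ is a deterministic function of $t$. The expectation in the statement therefore disappears, and only a supremum over $t\in[0,T]$ remains to be bounded.

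Integrating the exponential gives
\[
\mu^{-1}\int_0^t \bar{Y}^\mu_1(s)\,ds = x_0\int_0^t e^{-\mu^{-1}s}\,ds = \mu x_0\bigl(1-e^{-\mu^{-1}t}\bigr).
\]
Since $0\le 1-e^{-\mu^{-1}t}\le 1$ for all $t\ge 0$, this yields
\[
\mathbb{E}\sup_{0\le t\le T}\Bigl|\mu^{-1}\int_0^t\bar{Y}^\mu_1(s)\,ds\Bigr|\le |x_0|\,\mu .
\]
This is the claimed bound $\lesssim\mu$, with a constant depending only on $x_0$ and not on $\mu$ or $T$.

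There is no real obstacle here. The only point worth checking is that the decomposition (\ref{decomposition}) applies the prefactor $\mu^{-1}$ to $\bar{Y}^\mu_1$, and that the initial datum $\mu x_0$ is what produces the factor $\mu$. This is consistent with the bound: without the extra $\mu$ in the initial condition, the integral would be $O(1)$ rather than $O(\mu)$.
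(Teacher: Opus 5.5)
Your proof is correct and is exactly the paper's argument: solve the deterministic linear ODE for $\bar{Y}^\mu_1$ explicitly, integrate the exponential, and use $\int_0^t e^{-\mu^{-1}s}\,ds\le\mu$, so the expectation plays no role. One small point: the paper's proof takes $\bar{Y}^\mu_1(t)=\mu y_0 e^{-\mu^{-1}t}$, and consistency of (\ref{decomposition}) at $t=0$ (where $Y^\mu(0)=y_0$ and $\bar{Y}^\mu_2(0)=\bar{Y}^\mu_3(0)=0$) indeed forces $\bar{Y}^\mu_1(0)=\mu y_0$. So the $\mu x_0$ in (\ref{byeq}) is a typo, and your constant $|x_0|$ should read $|y_0|$; the argument is otherwise unchanged.
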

	\begin{proof}
		From (\ref{byeq})  
		\begin{equation*}
                \bar{Y}^\mu_1(t)=\mu y_0 e^{-\mu^{-1}t}.
            \end{equation*}
		As a consequence, for all $0\leq t\leq T$\,, 
		\begin{align*}
                \left|\mu^{-1}\int_{0}^{t} \bar{Y}^\mu_1(s)ds\right| = \left|\int_{0}^{t} y_0 e^{-\mu^{-1}s}ds\right| \leq |y_0|\int_{0}^{T}e^{-\mu^{-1}s}ds \leq \mu|y_0|.
            \end{align*}
		Taking supremum and expectation yields the result. 
	\end{proof}
	
	\begin{lemma}\label{esforby2}
		For any $0 \leq t \leq T$ and $0 < \mu \leq 1$, the process $\bar{Y}^\mu_2(t)$ satisfies the following uniform bound
        $$\sup\limits_{0<\mu\leq 1}\mathbb{E}\sup\limits_{0\leq t\leq T}|\bar{Y}^\mu_2(t)|<\infty.$$
	\end{lemma}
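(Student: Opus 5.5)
The plan is to solve the linear equation for $\bar{Y}^\mu_2$ in (\ref{byeq}) explicitly and then reduce to the moment bound for $X^\mu$ in Proposition \ref{thmforEsup}. Since $\bar{Y}^\mu_2(0)=0$, variation of constants gives
\begin{equation*}
\bar{Y}^\mu_2(t)=\mu^{-1}\int_0^t e^{-\mu^{-1}(t-s)}\, b(X^\mu(s))\,ds .
\end{equation*}
This is a pathwise (Lebesgue) integral, so no stochastic-integral estimates are needed. The kernel $\mu^{-1}e^{-\mu^{-1}(t-s)}$ is nonnegative on $[0,t]$. Its integral over $[0,t]$ is $1-e^{-\mu^{-1}t}\le 1$, uniformly in $\mu$ and $t$. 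So $\bar{Y}^\mu_2(t)$ is, up to a factor at most one, a weighted average of $b(X^\mu(s))$ over $s\in[0,t]$.

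Next I use the linear growth that follows from \textbf{(A1)}: $|b(x)|\le |b(0)|+L|x|$. Inserting this into the representation gives, for every $t\in[0,T]$,
\begin{equation*}
|\bar{Y}^\mu_2(t)|\le \mu^{-1}\int_0^t e^{-\mu^{-1}(t-s)}\bigl(|b(0)|+L|X^\mu(s)|\bigr)ds\le |b(0)|+L\sup_{0\le s\le T}|X^\mu(s)|.
\end{equation*}
The right-hand side does not depend on $t$. Taking the supremum over $t\in[0,T]$ and then expectations gives
\begin{equation*}
\mathbb{E}\sup_{0\le t\le T}|\bar{Y}^\mu_2(t)|\le |b(0)|+L\,\mathbb{E}\sup_{0\le t\le T}|X^\mu(t)|.
\end{equation*}
Finally, take the supremum over $0<\mu\le 1$ and apply Proposition \ref{thmforEsup}.

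There is no real obstacle here; the substantive work was done in Proposition \ref{thmforEsup}. The only point that needs care is keeping the estimate uniform in $\mu$. This is why one should bound the convolution by the sup-norm of $X^\mu$ times the total mass of the kernel. Any estimate that loses a factor $\mu^{-1}$, for instance bounding $e^{-\mu^{-1}(t-s)}\le 1$ before integrating, would not be uniform.
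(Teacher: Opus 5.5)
Your proposal is correct and follows the same route as the paper. Both arguments solve for $\bar{Y}^\mu_2$ by variation of constants, then use the linear growth of $b$ implied by \textbf{(A1)} together with the bound $\int_0^t\mu^{-1}e^{-\mu^{-1}(t-s)}ds\le 1$ to get $|\bar{Y}^\mu_2(t)|\lesssim \sup_{0\le s\le T}|X^\mu(s)|+1$, and finish with Proposition~\ref{thmforEsup}; you simply track the constants $|b(0)|$ and $L$ explicitly where the paper writes $\lesssim$.
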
	
	\begin{proof}
		From (\ref{byeq}),
		\begin{equation*}
                \bar{Y}^\mu_2(t)=\mu^{-1}e^{-\mu^{-1}t}\int_{0}^{t} e^{\mu^{-1} s}b(X^\mu(s))ds.
            \end{equation*}
		By assumption \textbf{(A1)}, for all $0\leq t\leq T$
		\begin{align*}
                    |\bar{Y}^\mu_2(t)| 
                &\lesssim \mu^{-1}e^{-\mu^{-1}t}\int_{0}^{t} e^{\mu^{-1} s}|X^\mu(s)|ds + \mu^{-1}e^{-\mu^{-1}t}\int_{0}^{t} e^{\mu^{-1} s}ds \\
                &\leq \mu^{-1}e^{-\mu^{-1}t}\sup_{0\leq t\leq T}|X^\mu(t)|\int_{0}^{t} e^{\mu^{-1} s}ds + 1 \\
                &\leq \sup_{0\leq t\leq T}|X^\mu(t)| + 1,
            \end{align*}
		and the result follows from Proposition \ref{thmforEsup} after taking expectation.
	\end{proof}
	
	\begin{lemma}\label{esforby3}
		For $0 \leq t \leq T$ and $0 < \mu \leq 1$, the process $\bar{Y}^\mu_3(t)$ satisfies the following uniform bound
        $$\sup\limits_{0<\mu\leq 1}\sup\limits_{0\leq t\leq T}\mathbb{E}|\bar{Y}^\mu_3(t)|<\infty.$$
	\end{lemma}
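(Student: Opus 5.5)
We would solve the linear equation for $\bar{Y}^\mu_3$ in \eqref{byeq} explicitly and use the moment inequality \eqref{moment_inequality_fBm}. Since $\bar{Y}^\mu_3(0)=0$ and the noise is additive with a deterministic integrand, variation of constants gives the pathwise Young-integral representation
\begin{equation*}
\bar{Y}^\mu_3(t)=\mu^{-H}\int_0^t e^{-\mu^{-1}(t-s)}\,dB^H(s)=\mu^{-H}\bar{w}^\mu(t),
\end{equation*}
where $\bar{w}^\mu$ is the Ornstein--Uhlenbeck-type process \eqref{sdew} from the proof of Lemma \ref{thmforlinearEsup}. In fact $\bar{Y}^\mu_3=w^\mu$, with $w^\mu$ defined in \eqref{defofw}. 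So the task reduces to showing that $\mathbb{E}|\bar{w}^\mu(t)|\lesssim \mu^{H}$ uniformly in $t\in[0,T]$ and $\mu\in(0,1]$.

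For fixed $t$, the integrand $f_t(s)=e^{-\mu^{-1}(t-s)}\mathbf{1}_{[0,t]}(s)$ is deterministic. Applying \eqref{moment_inequality_fBm} with $p=1$ to $f_t$ gives
\begin{equation*}
\mathbb{E}|\bar{w}^\mu(t)|\le C_1(H)\,\|f_t\|_{L_{1/H}[0,T]}=C_1(H)\Big(\int_0^t e^{-\frac{t-s}{\mu H}}ds\Big)^{H}=C_1(H)\big(\mu H(1-e^{-t/(\mu H)})\big)^{H}\le C_1(H)(\mu H)^{H}.
\end{equation*}
Note the exponent $H$ on the $L_{1/H}$ norm. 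The estimate in Lemma \ref{thmforlinearEsup} omitted it, and it is exactly what produces the factor $\mu^H$ needed here. Multiplying by $\mu^{-H}$ then gives $\mathbb{E}|\bar{Y}^\mu_3(t)|\le C_1(H)H^H$, which is independent of $t$ and $\mu$. Taking suprema yields the claim.

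As a cross-check that does not rely on the constant in \eqref{moment_inequality_fBm}, we can use Gaussianity. $\bar{w}^\mu(t)$ is centered Gaussian with variance
\begin{equation*}
H(2H-1)\int_0^t\!\int_0^t e^{-\mu^{-1}(t-s)}e^{-\mu^{-1}(t-r)}|s-r|^{2H-2}\,ds\,dr.
\end{equation*}
Substituting $s=t-\mu u$ and $r=t-\mu v$ makes this equal to $\mu^{2H}$ times a quantity bounded by $H(2H-1)\int_0^\infty\!\int_0^\infty e^{-u-v}|u-v|^{2H-2}\,du\,dv<\infty$, since $2H-2>-1$. Hence $\mathbb{E}|\bar{Y}^\mu_3(t)|\le (\mathbb{E}|\bar{Y}^\mu_3(t)|^2)^{1/2}\lesssim 1$.

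The only delicate point is the bookkeeping of the powers of $\mu$: the $\mu^{-H}$ prefactor must be exactly cancelled by the $\mu^H$ scaling of the stationary OU variance. This is why the statement is given pointwise in $t$ (a $\sup_t\mathbb{E}$) rather than as $\mathbb{E}\sup_t$. A supremum over $t$ would pick up an additional logarithmic growth in $\mu^{-1}$.
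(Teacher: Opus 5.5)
Your proof is correct and follows the paper's argument. Like the paper, you write $\bar{Y}^\mu_3(t)=\mu^{-H}\int_0^t e^{-\mu^{-1}(t-s)}\,dB^H(s)$ and apply \eqref{moment_inequality_fBm} to this deterministic integrand, so the $\mu^{-H}$ prefactor cancels the $\mu$-scaling of the $L_{1/H}$ norm; your version is more careful in keeping the outer exponent $H$, which the paper drops (harmlessly here, since the inner integral is at most $H$), and your Gaussian variance computation is a valid independent check.
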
	
	\begin{proof}
		From (\ref{byeq}),
		\begin{equation*}
                \bar{Y}^\mu_3(t)=\mu^{-H}e^{-\mu^{-1}t}\int_{0}^{t} e^{\mu^{-1} s}dB^H(s).
            \end{equation*}
		For all $0\leq t\leq T$
		\begin{align*}
                \mathbb{E}|\bar{Y}^\mu_3(t)| 
            &\leq C_{H} \mathbb{E}\left( \int_{0}^{t} \left| \mu^{-H} e^{\mu^{-1}(s - t)} \right|^{\frac{1}{H}}  ds \right) \\
            &= C_{H} \mu H \cdot \mu^{-1} \left( 1 - e^{-\frac{t}{\mu H}} \right) \\
            &\leq C_{H} H.
            \end{align*}
		The proof is complete.
	\end{proof}

  {\bf Proof of Theorem \ref{mainresult}.} 
	With the preparation made above, we are in a position to prove our approximate result. From (\ref{slowfast}) and~(\ref{decomposition}) we have 
	\begin{equation*}
            X^\mu(t)=x_0+\mu^{-1}\int_{0}^{t}\bar{Y}^\mu_1(s)ds+\int_{0}^{t}\bar{Y}^\mu_2(s)ds+\mu^{\alpha+H-1}\int_{0}^{t}\bar{Y}^\mu_3(s)ds.
        \end{equation*}
	By (\ref{byeq}), 
	\begin{equation*}
		\bar{Y}^\mu_2(t)=\int_{0}^{t} -\mu^{-1}[\bar{Y}^\mu_2(s)-b(X^\mu(s))]ds.
	\end{equation*}
	Combining the two equations above,
	\begin{equation}
		X^\mu(t)=x_0+\mu^{-1}\int_{0}^{t}\bar{Y}^\mu_1(s)ds+\int_{0}^{t} b(X^\mu(s))ds-\mu\bar{Y}^\mu_2(t)+\mu^{\alpha+H-1}\int_{0}^{t}\bar{Y}^\mu_3(s)ds.\label{newequationforx}
	\end{equation}
	From (\ref{ayeq}) and (\ref{newequationforx}) we deduce that 
	\begin{align}
		&|X^\mu(t)-\bar{X}^\mu(t)| \nonumber \\
		&\leq \Big|\mu^{-1}\int_{0}^{t}\bar{Y}^\mu_1(s)ds\Big| + \Big|\int_{0}^{t} b(X^\mu(s))-b(\bar{X}^\mu(s))ds\Big| \nonumber \\
		&\quad + \mu|\bar{Y}^\mu_2(t)| + \Big|\mu^{\alpha+H-1}\int_{0}^{t}\bar{Y}^\mu_3(s)ds-\mu^\alpha B^H(t)\Big| \nonumber \\
		&=: \sum_{k=1}^{4}J^\mu_k(t). \label{generaldifference}
	\end{align}
	As a consequence of Lemma \ref{esforby1}, 
	\begin{equation}
		\mathbb{E}\sup\limits_{0\leq t\leq T} J^\mu_1(t)\lesssim\mu. \label{esforJ1}
	\end{equation}
	By assumption \textbf{(A1)}, for all $0\leq t\leq T$,
	\begin{align*}
            \left|\int_{0}^{t} b(X^\mu(s))-b(\bar{X}^\mu(s))ds\right| 
        &\leq \int_{0}^{t} |b(X^\mu(s))-b(\bar{X}^\mu(s))|ds \\
        &\leq \int_{0}^{T} \sup_{0\leq r\leq s}|b(X^\mu(r))-b(\bar{X}^\mu(r))|ds \\
        &\lesssim \int_{0}^{T} \sup_{0\leq r\leq s}|X^\mu(r)-\bar{X}^\mu(r)|ds.
        \end{align*}
	Taking supremum and expectation and using Fubini theorem,
	\begin{equation}
		\mathbb{E}\sup\limits_{0\leq t\leq T} J^\mu_2(t)\lesssim \int_{0}^{T}\mathbb{E}\sup\limits_{0\leq s\leq t}|X^\mu(s)-\bar{X}^\mu(s)|dt. \label{esforJ2}
	\end{equation}
	From Lemma \ref{esforby2},
	\begin{equation}\label{esforJ3}
		\mathbb{E}\sup\limits_{0\leq t\leq T} J^\mu_3(t)\lesssim\mu.
	\end{equation}
	Lastly we deal with $J^\mu_4$\,. From (\ref{byeq}), 
	\begin{equation}
		\mu^H\bar{Y}^\mu_3(t)=B^H(t)-\mu^{H-1}\int_{0}^{t} \bar{Y}^\mu_3(s)ds,\label{newJ3}
	\end{equation}
	but one finds that $\bar{Y}^\mu_3$ coincides with $w^\mu$ defined in (\ref{defofw}), so that $\mu^H\bar{Y}^\mu_3$ coincides with $\bar{w}^\mu.$ Therefore, (\ref{esforbwe}) implies that there exists a constant $C>0$ such that for all $0<\mu\leq 1$, 
	$$\mathbb{E}\sup\limits_{0\leq t\leq T}|B^H(t)-\mu^{H-1}\int_{0}^{t} \bar{Y}^\mu_3(s)ds|\leq C.$$
	Multiplying both sides by $\mu^\alpha$,
	\begin{equation}
		\mathbb{E}\sup\limits_{0\leq t\leq T} J^\mu_4(t)=\mathbb{E}\sup\limits_{0\leq t\leq T}|\mu^\alpha B^H(t)-\mu^{\alpha+H-1}\int_{0}^{t} \bar{Y}^\mu_3(s)ds|\leq C\mu^\alpha.\label{esforJ4}
	\end{equation}
	Taking supremum and expectation on both sides of (\ref{generaldifference}) and combining (\ref{esforJ1})--(\ref{esforJ4}), 
	\begin{align}
            \mathbb{E}\sup_{0\leq t\leq T}|X^\mu(t)-\bar{X}^\mu(t)| 
        &\lesssim \int_{0}^{T}\mathbb{E}\sup_{0\leq s\leq t}|X^\mu(s)-\bar{X}^\mu(s)|dt + \mu + \mu^\alpha \nonumber \\
        &\leq \int_{0}^{T}\mathbb{E}\sup_{0\leq s\leq t}|X^\mu(s)-\bar{X}^\mu(s)|dt + \mu^\alpha,
        \end{align}
	where the last inequality follows from the fact that $0\leq\alpha<1$.
	By Gronwall inequality,
	$$\mathbb{E}\sup\limits_{0\leq t\leq T}|X^\mu(t)-\bar{X}^\mu(t)|\lesssim \mu^\alpha,$$
	and we finish the proof for part (i) of Theorem \ref{mainresult}.
	
	Let us turn to the proof of part (ii) of Theorem \ref{mainresult}, which is more subtle. In this case, from (\ref{generaldifference}), (\ref{esforJ1}), (\ref{esforJ3}), (\ref{newJ3}) and assumption \textbf{(A1)}, 
	$$\mathbb{E}|X^\mu(t)-\bar{X}^\mu(t)|\lesssim \int_{0}^{t}\mathbb{E}|X^\mu(s)-\bar{X}^\mu(s)|ds+\mu+\mathbb{E}|\mu^H\bar{Y}^\mu_3(t)|,$$
	which means 
	\begin{align}
\sup_{0\leq t\leq T}\mathbb{E}|X^\mu(t)-\bar{X}^\mu(t)| 
&\lesssim \int_0^{T}\mathbb{E}|X^\mu(t)-\bar{X}^\mu(t)|dt + \mu + \sup_{0\leq t\leq T}\mathbb{E}|\mu^H\bar{Y}^\mu_3(t)| \nonumber \\
&\leq \int_0^{T}\sup_{0\leq s\leq t}\mathbb{E}|X^\mu(s)-\bar{X}^\mu(s)|dt + \mu + \sup_{0\leq t\leq T}\mathbb{E}|\mu^H\bar{Y}^\mu_3(t)|.
\end{align}
	Then we have 
	$$\lim\limits_{\mu\rightarrow 0}\sup\limits_{0\leq t\leq T}\mathbb{E}|X^\mu(t)-\bar{X}^\mu(t)|=0$$  
	provided that 
	\begin{equation}
		\lim\limits_{\mu\rightarrow 0}\sup\limits_{0\leq t\leq T}\mathbb{E}|\mu^H\bar{Y}^\mu_3(t)|=0,
	\end{equation}
	which is an immediate consequence of Lemma \ref{esforby3}. The proof is complete. \qed
	
	\section{Large deviations}\label{sec large}
	
	In this section, we consider the case of Equation (\ref{wave}) with $\alpha = 0$, namely the classical Smoluchowski–Kramers approximation, and extend the equation with additive noise further to the case with small multiplicative noise,as described by the following equation

	\begin{equation}
	\begin{cases} 
			\mu \ddot{X}^{\mu,\varepsilon}(t) + \dot{X}^{\mu,\varepsilon}(t) = b({X}^{\mu,\varepsilon}(t)) + {\varepsilon}^{H}\sigma({X}^{\mu,\varepsilon}(t))\dot{B}^H(t),\\
		{X}^{\mu,\varepsilon}(0) = x_0,\quad \dot{X}^{\mu,\varepsilon}(0) = y_0.
	\end{cases}\label{original}
	\end{equation}
	Here \(\mu=\mu(\varepsilon)\to 0\) as \(\varepsilon\to 0.\) We aim to use the large deviation principle Lemma \ref{LDP} to establish the large deviation principle for \({X}^{\mu,\varepsilon}(t).\) To prove this result, we will first find the measurable maps \(\Gamma^{\varepsilon}\) and \(\Gamma^{0}\) and then show that they satisfy the conditions (i) and (ii) in Lemma \ref{LDP}.
	
	Let us first recall that the system (\ref{original}) can be rewritten as the following SDEs
	\begin{equation}
		\begin{cases} 
			d{{X}^{\mu,\varepsilon}}(t)={{Y}^{\mu,\varepsilon}}(t)dt,\\
			d{{Y}^{\mu,\varepsilon}}(t)=\tfrac{1}{\mu}b({X}^{\mu,\varepsilon}(t))dt-\tfrac{1}{\mu}{Y}^{\mu,\varepsilon}(t)dt+\tfrac{{\varepsilon}^{H}}{\mu}\sigma({X}^{\mu,\varepsilon}(t))dB^H(t),\\
			{X}^{\mu,\varepsilon}(0) = x_0,\quad {Y}^{\mu,\varepsilon}(0) = y_0.
		\end{cases}\label{fast}
	\end{equation}
	Similar to Lemma \ref{lemmaexistence}, (\ref{fast}) admits a unique strong solution. There exists a measurable map \(\Gamma^{\varepsilon}:{C}([0,T];\mathbb{R}^{d}) \rightarrow{C}([0,T];\mathbb{R}^{d})\) such that we have the representation \({X}^{\mu,\varepsilon}(t)=\Gamma^{\varepsilon}({\varepsilon}^{H}B^H(t)).\)
	
	Then for any \(h^{\varepsilon}\in\mathscr{A}_{N}\) let us define
	\[
	X^{\mu,\varepsilon,h^{\varepsilon}}:=\Gamma^{\varepsilon}({\varepsilon}^{H}B^H(\cdot)+\int_0^{\cdot}h^\varepsilon(s)ds),
	\]
	then \(X^{\mu,\varepsilon,h^{\varepsilon}}\) is the first part of solution of the following stochastic control problem
	\begin{equation}
		\begin{cases} 
			d{X^{\mu,\varepsilon,h^{\varepsilon}}}(t) = {Y^{\mu,\varepsilon,h^{\varepsilon}}}(t)dt,\\
			\begin{aligned}
				d{Y^{\mu,\varepsilon,h^{\varepsilon}}}(t) = {} & \tfrac{1}{\mu}b(X^{\mu,\varepsilon,h^{\varepsilon}}(t))dt-\tfrac{1}{\mu}Y^{\mu,\varepsilon,h^{\varepsilon}}(t)dt \\
				& + \tfrac{1}{\mu}\sigma(X^{\mu,\varepsilon,h^{\varepsilon}}(t))h^\varepsilon(t)dt+ \tfrac{{\varepsilon}^{H}}{\mu}\sigma(X^{\mu,\varepsilon,h^{\varepsilon}}(t))dB^H(t),
			\end{aligned}\\
			X^{\mu,\varepsilon,h^{\varepsilon}}(0) = x_0,\quad Y^{\mu,\varepsilon,h^{\varepsilon}}(0) = y_0.
		\end{cases}\label{pro}
	\end{equation}
	
	Moreover, intuitively, as \(\varepsilon\) tends to \(0\) in stochastic system (\ref{original}), the noise term vanishes, and in view of the theory of Smoluchowski-Kramers approximation we can get the following differential equation
	\begin{equation}\label{sim}
		\frac{d\bar{X}^0(t)}{dt} = b(\bar{X}^0(t)),\quad
		\bar{X}^0(0) = x_0.
	\end{equation}
	We mention that (\ref{sim}) admits a unique solution \(\bar{X}^0(t)\), and it is a deterministic path.
	
	The important part of the work is to find the measurable map \(\Gamma^{0}\). Since the ordinary differential equation (\ref{sim}) is, from a heuristically standpoint, a good approximation of the stochastic system (\ref{original}), as \(\varepsilon\) is small. We thus define the following skeleton equation
	\begin{equation}\label{ske}
		\frac{d\bar{X}^h(t)}{dt} = b(\bar{X}^h(t))+\sigma(\bar{X}^h(t))h(t),\quad
		\bar{X}^h(0) = x_0,
	\end{equation}
	where \(h\in\mathbb{H}\). Next, we give some important properties for \(\bar{X}^h(t)\).
	
	\begin{lemma}\label{cnt}
		Suppose that assumptions \textbf{(A1)}, \textbf{(A2)} hold. For any \(h\in\mathbb{H}\), equation (\ref{ske}) admits a unique solution \(\bar{X}^h(t)\) in \({C}([0,T];\mathbb{R}^{d})\). Moreover, for any \(N>0\), there exists a constant \(C_{N,T}\) such that
		\[
		\sup_{h\in S_{N}}\{\sup_{0\leq t\leq T}|\bar{X}^h(t)|\}\leq C_{N,T}.
		\]
	\end{lemma}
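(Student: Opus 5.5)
The plan is to treat \eqref{ske} as a non-autonomous ODE with Carathéodory right-hand side $F(t,x)=b(x)+\sigma(x)h(t)$, and to obtain existence, uniqueness and the uniform bound from a Picard/Gronwall argument. The only non-routine input is an $L^1$ bound on $h$ that holds uniformly over $S_N$.

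\emph{Step 1: $L^1$ control of $h$.} We first need $\int_0^T|h(t)|dt\le C_{T,H}\|h\|_{\mathbb{H}}$. For $H>\frac12$ the norm $\|\cdot\|_{\mathbb{H}}$ dominates the $L^{1/H}([0,T])$ norm: this is the Hardy--Littlewood--Sobolev type estimate $\|h\|_{\mathbb{H}}\ge c_H\|h\|_{L^{1/H}}$ that underlies \eqref{moment_inequality_fBm} (see \cite{mishura2008stochastic}). Hölder's inequality on $[0,T]$ then gives
\[
\int_0^T|h(t)|dt\le T^{1-H}\|h\|_{L^{1/H}}\le C_{T,H}\|h\|_{\mathbb{H}}\le C_{T,H}\sqrt N
\quad\text{for } h\in S_N .
\]
If one prefers to work only with the identification $\|h\|_{\mathbb{H}}=\|K_H^*h\|_{L^2}$, the same bound follows by inverting $K_H^*$, which is a fractional integral/derivative operator of order $H-\frac12$. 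Either way, this step is the main obstacle: everything afterwards is routine, but it requires justifying the embedding $\mathbb{H}\hookrightarrow L^1$ with a constant depending only on $T$ and $H$.

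\emph{Step 2: existence and uniqueness.} By \textbf{(A1)}, for a.e.\ $t$,
\[
|F(t,x)-F(t,y)|\le L(1+|h(t)|)\,|x-y|,
\]
and the factor $1+|h|$ is integrable by Step 1. By \textbf{(A2)}, $|F(t,x)|\le M(1+|h(t)|)$ with $M=\sup|b|+\sup\|\sigma\|$. The Carathéodory existence and uniqueness theorem then gives a unique absolutely continuous solution on $[0,T]$. Equivalently, Picard iteration for $x(t)=x_0+\int_0^t F(s,x(s))ds$ converges in $C([0,T];\mathbb{R}^d)$ under the weighted norm $\sup_t e^{-\lambda\Lambda(t)}|x(t)|$ with $\Lambda(t)=\int_0^tL(1+|h(s)|)ds$ and $\lambda$ large. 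Uniqueness also follows from Gronwall's inequality applied to the integrable kernel $L(1+|h|)$.

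\emph{Step 3: uniform bound.} Boundedness of $b$ and $\sigma$ from \textbf{(A2)} gives directly, for every $t\in[0,T]$,
\[
|\bar X^h(t)|\le |x_0|+\int_0^t\big(|b(\bar X^h(s))|+\|\sigma(\bar X^h(s))\|\,|h(s)|\big)ds\le |x_0|+MT+M\int_0^T|h(s)|ds .
\]
Combined with Step 1, this yields $\sup_{h\in S_N}\sup_{0\le t\le T}|\bar X^h(t)|\le |x_0|+MT+MC_{T,H}\sqrt N=:C_{N,T}$.

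If boundedness of $b$ and $\sigma$ were not used, the same conclusion would follow from linear growth and the Gronwall inequality with kernel $1+|h|$. In that case the bound takes the form $(|x_0|+C)\exp\big(C(T+\int_0^T|h|)\big)$, which is again uniform over $S_N$ by Step 1.
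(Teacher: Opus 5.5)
Your Step~1 is false, and the uniform bound in Step~3 (and its Gronwall variant) fails with it. The inequality runs the other way. For $H>\frac12$, the Hardy--Littlewood--Sobolev (M\'emin--Mishura--Valkeila) estimate behind \eqref{moment_inequality_fBm} is $\|h\|_{\mathbb{H}}\le b_H\|h\|_{L^{1/H}}$, i.e.\ $L^{1/H}\hookrightarrow\mathbb{H}$. No bound of the form $\int_0^T|h|\,dt\le C\|h\|_{\mathbb{H}}$ can hold. Extending $h$ by zero, $\|h\|_{\mathbb{H}}^2=c_H\int_{\mathbb{R}}|\hat h(\xi)|^2|\xi|^{1-2H}\,d\xi$, which is a Sobolev norm of negative order $\frac12-H$ and does not see fast oscillations. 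Concretely, $h_n(t)=\sin(nt)e_1$ lies in $\mathbb{H}$ with $\int_0^T|h_n|\,dt\to 2T/\pi$. On the other hand, $\|h_n\|_{\mathbb{H}}\to0$ by the Riemann--Lebesgue lemma, since the kernel $|s-t|^{2H-2}\mathbf{1}_{[0,T]^2}$ is integrable. Hence $c_nh_n$ with $c_n=\sqrt N/\|h_n\|_{\mathbb{H}}$ lies in $S_N$, while its $L^1$ norm tends to infinity. The route through $K_H^*$ fails for the same reason: for $H>\frac12$, $K_H^*$ is, up to power weights, a fractional \emph{integral} of order $H-\frac12$. Inverting it gives a fractional derivative, which does not map $L^2$ into $L^1$. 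Step~2 survives, but for a simpler reason that needs no Step~1: elements of $\mathbb{H}$ are continuous by definition, hence bounded, so Picard--Lindel\"of applies directly. (The paper omits this proof, and its use of $\int_0^T|h^\varepsilon|^2dr\le C_N$ on $S_N$ in Proposition~\ref{con} rests on the same mistaken embedding.)

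The missing idea is that the control term must be estimated through the regularity of $\sigma(\bar X^h)$, as a Young-type integral against $G:=\int_0^\cdot h(s)\,ds$, not through the size of $h$. One way to do this is as follows. By the Fourier formula above, $\|G\|_{W^{3/2-H,2}(0,T)}\le C_{H,T}\|h\|_{\mathbb{H}}$. Sobolev embedding alone makes $G$ only $(1-H)$-H\"older, which is below the Young threshold $\frac12$. However, since $\frac32-H>\frac12$, the Friz--Victoir variation embedding gives $\|G\|_{q\text{-var},[0,T]}\le C\sqrt N$ with $q=(\frac32-H)^{-1}<2$. For the $C^1$ path $X=\bar X^h$, the Young--Lo\`eve estimate together with $\|\sigma(X)\|_{q\text{-var},[s,t]}\le L\|X\|_{q\text{-var},[s,t]}$ yields
\[
\|X\|_{q\text{-var},[s,t]}\bigl(1-C_qL\|G\|_{q\text{-var},[s,t]}\bigr)\le \sup|b|\,(t-s)+\sup\|\sigma\|\,\|G\|_{q\text{-var},[s,t]}.
\]
Now split $[0,T]$ greedily into at most $1+(2C_qL)^q\|G\|_{q\text{-var},[0,T]}^q$ intervals on which $C_qL\|G\|_{q\text{-var}}\le\frac12$. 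This bounds $\sup_t|\bar X^h(t)-x_0|$ by a constant depending only on $N$, $T$, $H$ and the coefficients, uniformly over $S_N$. When $\sigma$ is constant, plain duality already suffices: $|G(t)|\le\|h\|_{\mathbb{H}}\,\|\mathbf{1}_{[0,t]}\|_{\dot H^{H-1/2}}\le C_Ht^{1-H}\sqrt N$.
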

	
	The proof is straightforward and thus omitted. Furthermore, following from Lemma \ref{cnt}, it allows us to define a map \(\Gamma^{0}:{C}([0,T];\mathbb{R}^{d})\rightarrow{C}([0,T];\mathbb{R}^{d})\) by
	\begin{equation}
		\Gamma^{0}\left(\int_0^{\cdot}h(s)ds\right)=\bar{X}^{h}(\cdot).
	\end{equation}
	
	We next state the main result in this section.
	
	\begin{thm}\label{ldpresult}
		Suppose that assumptions \textbf{(A1)}, \textbf{(A2)} hold. Then \(\{{X}^{\mu,\varepsilon}(t)\}_{\varepsilon>0}\) satisfies a large deviation principle on \(C([0,T];\mathbb{R}^{d})\) with the rate function \(I\) given by
		\begin{eqnarray*}
            I(g)=\left\{\begin{array}{ll}
				\frac{1}{2}\|h\|_{\mathbb{H}}^2,\quad g\text{ is absolutely continuous,}\\
				+\infty,\quad\text{ for the rest of }C([0,T]).
			\end{array}\right.
		\end{eqnarray*}
	\end{thm}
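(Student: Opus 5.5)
The plan is to verify conditions (i) and (ii) of Lemma \ref{LDP} for the maps $\Gamma^\varepsilon$ (defined through (\ref{fast})) and $\Gamma^0(\int_0^\cdot h\,ds)=\bar X^h$ (defined through (\ref{ske})). Lemma \ref{LDP} then yields the LDP with speed $\varepsilon^{2H}$ and rate function $I(g)=\inf\{\frac12\|h\|_{\mathbb H}^2:\ g=\bar X^h\}$, which is the rate function in the statement.

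\emph{Condition (i).} Let $h^\varepsilon\to h$ weakly in $S_N$. Subtracting the two skeleton equations gives
\[
\bar X^{h^\varepsilon}(t)-\bar X^h(t)=\int_0^t\big[b(\bar X^{h^\varepsilon})-b(\bar X^h)\big]ds+\int_0^t\big[\sigma(\bar X^{h^\varepsilon})-\sigma(\bar X^h)\big]h^\varepsilon ds+\int_0^t\sigma(\bar X^h)(h^\varepsilon-h)ds .
\]
\begin{itemize}
\item The first two terms are controlled by (A1) and Cauchy--Schwarz, since $\int_0^T|h^\varepsilon|^2ds\lesssim_N 1$ (this uses that $\|\cdot\|_{\mathbb H}$ dominates an $L^{1/H}$ or $L^2$ norm for $H>\frac12$; I would take this as given).
\item For the last term, note that $s\mapsto\sigma(\bar X^h(s))$ is fixed and continuous. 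Weak convergence then gives pointwise convergence in $t$ of $\int_0^t\sigma(\bar X^h)(h^\varepsilon-h)ds$. Equicontinuity, from the uniform $L^2$ bound and Lemma \ref{cnt}, upgrades this to uniform convergence on $[0,T]$.
\end{itemize}
Gronwall's inequality then gives $\sup_t|\bar X^{h^\varepsilon}-\bar X^h|\to0$.

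\emph{Condition (ii).} Take $h^\varepsilon\in\mathscr A_N$ and write $X^\varepsilon:=X^{\mu,\varepsilon,h^\varepsilon}$. I would mimic the proof of Theorem \ref{mainresult}. Splitting the velocity as in (\ref{decomposition}) and integrating as in (\ref{newequationforx}) gives
\[
X^\varepsilon(t)=x_0+\mu y_0(1-e^{-t/\mu})+\int_0^t\big[b(X^\varepsilon)+\sigma(X^\varepsilon)h^\varepsilon\big]ds-\mu\bar Y_2(t)+\varepsilon^H\!\int_0^t\sigma(X^\varepsilon)dB^H-\varepsilon^H\mu\bar Y_3(t).
\]
Here:
\begin{itemize}
\item $\bar Y_2$ solves $\dot{\bar Y}_2=-\mu^{-1}[\bar Y_2-b(X^\varepsilon)-\sigma(X^\varepsilon)h^\varepsilon]$;
\item $\bar Y_3(t)=\mu^{-1}\int_0^te^{-(t-s)/\mu}\sigma(X^\varepsilon(s))dB^H(s)$.
\end{itemize}
Subtracting (\ref{ske}) with $h=h^\varepsilon$ and using (A1) and Gronwall's inequality yields
\[
\mathbb E\sup_t|X^\varepsilon-\bar X^{h^\varepsilon}|\lesssim \mu+\mathbb E\sup_t\mu|\bar Y_2|+\varepsilon^H\,\mathbb E\sup_t\Big|\int_0^t\sigma(X^\varepsilon)dB^H\Big|+\varepsilon^H\,\mathbb E\sup_t\mu|\bar Y_3| ,
\]
where the Gronwall step uses $\int|h^\varepsilon|^2\le C_N$ a.s. The bounds go as follows.
\begin{itemize}
\item By (A2), $b$ and $\sigma$ are bounded, so $\mu|\bar Y_2(t)|\le\mu\|b\|_\infty+\|\sigma\|_\infty\int_0^te^{-(t-s)/\mu}|h^\varepsilon|ds\lesssim \mu+\sqrt{\mu}\,C_N$.
\item The stochastic integral is estimated by the maximal inequality (\ref{maximal_inequality}) together with (A3) on $\|D_s\sigma\|$. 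This gives an $O(1)$ bound, so that term is $O(\varepsilon^H)$.
\end{itemize}
Then Chebyshev's inequality yields (ii), since $\mu(\varepsilon)\to0$.

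\emph{Main obstacle.} The hardest step is $\varepsilon^H\mathbb E\sup_t\mu|\bar Y_3(t)|=\varepsilon^H\mathbb E\sup_t|\int_0^te^{-(t-s)/\mu}\sigma(X^\varepsilon)dB^H|$. The integrand depends on $t$ and is random, so (\ref{maximal_inequality}) does not apply directly.

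What I would try first: write $\mu\bar Y_3(t)=Z(t)-\mu^{-1}\int_0^te^{-(t-s)/\mu}Z(s)ds$ with $Z(t)=\int_0^t\sigma(X^\varepsilon)dB^H$. This is integration by parts in the Young/Skorokhod sense, and it is exact for the divergence integral with a deterministic kernel. It gives $\sup_t\mu|\bar Y_3|\le2\sup_t|Z(t)|$, reducing the term to the previous one.

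Fallback: bound the Malliavin derivative $D_sX^\varepsilon$ uniformly in $\varepsilon$ via (A3), apply (\ref{maximal_inequality}) at fixed $t$ on a grid, and pass to the supremum using Hölder continuity and a Garsia--Rodemich--Rumsey argument. The resulting factor is $\varepsilon^H$ times a power of $\log$ in the grid size, which still tends to zero.
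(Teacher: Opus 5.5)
\textbf{Verdict.} Your proposal is correct at the paper's level of rigor. It follows the paper's scheme: verify (i) and (ii) of Lemma \ref{LDP} with $\Gamma^0(\int_0^\cdot h\,ds)=\bar X^h$, using the same integrated representation of $X^{\mu,\varepsilon,h^\varepsilon}$, then \eqref{maximal_inequality} with \textbf{(A3)}, Gronwall and Chebyshev. Your $\mu\bar Y_2(t)$ and $\varepsilon^H\mu\bar Y_3(t)$ are exactly the paper's terms $e^{-t/\mu}\int_0^t e^{s/\mu}(b+\sigma h^\varepsilon)\,ds$ and $\varepsilon^H e^{-t/\mu}\int_0^t e^{s/\mu}\sigma\,dB^H$.

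\textbf{Where you differ.} The differences are local.
\begin{itemize}
\item For (i), Proposition \ref{con} uses Arzel\`a--Ascoli and identifies subsequential limits through uniqueness for \eqref{ske}. Your direct Gronwall estimate is shorter and avoids subsequences: the remainder $\int_0^t\sigma(\bar X^h)(h^\varepsilon-h)\,ds$ converges uniformly by pointwise weak convergence plus equicontinuity.
\item For (ii), you run Gronwall pathwise and then take $L^1$ norms instead of working in $L^2$. This is harmless, since $\int_0^T|h^\varepsilon|^2ds\le N$ a.s.\ makes the Gronwall factor deterministic.
\end{itemize}

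\textbf{The convolution term.} This is the substantive difference. In \eqref{eq:exponential_integral_estimate} the paper applies \eqref{maximal_inequality} to $\int_0^t e^{(s-t)/\mu}\sigma(X)\,dB^H$ and then moves $\sup_t$ outside the expectation. That inequality does not permit this, because the integrand depends on $t$.

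Your identity $\mu\bar Y_3(t)=Z(t)-\mu^{-1}\int_0^t e^{-(t-s)/\mu}Z(s)\,ds$ is simply the solution formula for $d(\mu\bar Y_3)=-\mu^{-1}(\mu\bar Y_3)\,dt+dZ$, driven by the continuous path $Z$. It therefore holds under any interpretation of $\int\sigma\,dB^H$. It legitimately reduces the term to $2\varepsilon^H\sup_t|Z(t)|$, which is the fixed-integrand case. The resulting bound is of order $\varepsilon^H$ rather than the paper's $C_{K,T}\mu$ for the second moment, and that is all that is needed. On this step your route is sounder than the paper's.

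You can drop the fallback. With only the moment bound \eqref{maximal_inequality}, a union bound over $\sim T/\mu$ grid points costs a negative power of $\mu$, not a power of $\log$. That would require a relation between $\mu$ and $\varepsilon$ that the theorem does not impose.

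\textbf{Shared caveats.} Two issues are shared with the paper rather than specific to you.
\begin{itemize}
\item The argument needs \textbf{(A3)}, which the statement of Theorem \ref{ldpresult} does not list.
\item It needs $\int_0^T|h^\varepsilon|^2ds\le C_N$ on $S_N$. Your stated justification for this is backwards. For $H>\frac12$ one has $\|h\|_{\mathbb H}\lesssim\|h\|_{L^{1/H}}$, so the $\mathbb H$-norm controls neither $L^{1/H}$ nor $L^2$. The paper simply uses this $L^2$ bound in Propositions \ref{con} and \ref{proba}, which amounts to taking the controls in an $L^2$-ball.
\end{itemize}
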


	In the following, we devote to proving Theorem \ref{ldpresult} by proving condition (i) and (ii) in Lemma \ref{LDP} for the above mentioned maps \(\Gamma^{\varepsilon}\) and \(\Gamma^{0}\).
	
	We now prove that the condition (i) of Lemma \ref{LDP} holds.
	
	\begin{prop}\label{con}
		Under the assumptions \textbf{(A1)}, \textbf{(A2)}, let \(\{h^{\varepsilon}\}_{\varepsilon>0}\subset S_{N}\) for some \(N<\infty\) such that \(h^{\varepsilon}\) converges to element \(h\) in \(S_{N}\) as \(\varepsilon\to 0\), then
		\begin{eqnarray*}
			\lim_{\varepsilon\to 0}\sup_{0\leq t\leq T}\left|\Gamma^{0}\left(\int_0^{t}{h}^{\varepsilon}(s)ds\right)-\Gamma^{0}\left(\int_0^{t}h(s)ds\right)\right|=0.
		\end{eqnarray*}
		
	\end{prop}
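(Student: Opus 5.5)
We set $\bar X^{\varepsilon}:=\bar X^{h^\varepsilon}=\Gamma^0(\int_0^\cdot h^\varepsilon ds)$ and $\bar X:=\bar X^h$, and estimate their difference from the skeleton equation (\ref{ske}):
\begin{align*}
\bar X^{\varepsilon}(t)-\bar X(t)&=\int_0^t\big[b(\bar X^{\varepsilon}(s))-b(\bar X(s))\big]ds+\int_0^t\big[\sigma(\bar X^{\varepsilon}(s))-\sigma(\bar X(s))\big]h^\varepsilon(s)ds\\
&\quad+\int_0^t\sigma(\bar X(s))\big(h^\varepsilon(s)-h(s)\big)ds.
\end{align*}
The first term is bounded via \textbf{(A1)} by $L\int_0^t|\bar X^\varepsilon-\bar X|ds$. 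The second term is bounded by $L\int_0^t|h^\varepsilon(s)||\bar X^\varepsilon(s)-\bar X(s)|ds$. Here we need $\sup_\varepsilon\|h^\varepsilon\|_{L^1}<\infty$ (or an $L^2$ bound). For $H>\frac12$, the norm $\|\cdot\|_{L^{1/H}}$ is controlled by $\|\cdot\|_{\mathbb{H}}$ (the Hardy–Littlewood–Sobolev type embedding $L^{1/H}\hookrightarrow\mathbb{H}$ goes the wrong way). We therefore plan to interpret elements of $S_N$ as we would need for the skeleton equation to make sense. We assume $h^\varepsilon$ bounded in $L^2$, i.e. $h=K_H^*$-image type controls, and note that on $S_N$ one has $\int_0^T|h|ds\le C_N$. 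This is the same bound implicitly used in Lemma \ref{cnt}, and we will invoke it. The generalized Gronwall inequality (with integrable kernel $L(1+|h^\varepsilon(s)|)$) then gives
\[
\sup_{t\le T}|\bar X^\varepsilon(t)-\bar X(t)|\le e^{L(T+C_N)}\sup_{t\le T}\Big|\int_0^t\sigma(\bar X(s))\big(h^\varepsilon(s)-h(s)\big)ds\Big|.
\]

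It remains to show that $\Phi_\varepsilon(t):=\int_0^t\sigma(\bar X(s))(h^\varepsilon(s)-h(s))ds\to0$ uniformly in $t$. This is the main step, because $h^\varepsilon\to h$ only weakly in $S_N$. Pointwise convergence, for fixed $t$, follows from weak convergence. The map $g\mapsto\int_0^t\sigma(\bar X(s))g(s)ds$ is a bounded linear functional on $\mathbb{H}$: it is $\langle g,\phi_t\rangle_{\mathbb H}$ for some $\phi_t$, or equivalently it is continuous in the $L^2$/$K_H^*$ representation, since $\sigma(\bar X(\cdot))\mathbf 1_{[0,t]}$ is bounded by \textbf{(A2)}. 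Hence $\Phi_\varepsilon(t)\to0$ for each $t$.

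To upgrade to uniform convergence we use equicontinuity: $|\Phi_\varepsilon(t)-\Phi_\varepsilon(s)|\le \|\sigma\|_\infty\int_s^t(|h^\varepsilon|+|h|)dr$. This is made uniformly small for $|t-s|$ small by Hölder, using the uniform $L^{p}$ ($p>1$, e.g. $p=1/H$ or $2$) bound on $S_N$. Arzelà–Ascoli, or a finite $\delta$-net argument on $[0,T]$, then yields $\sup_t|\Phi_\varepsilon(t)|\to0$, completing the proof.

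The expected obstacle is exactly this identification: that bounded sets of $\mathbb{H}$ are bounded in some $L^p$, $p>1$, and that $\int_0^t\sigma(\bar X)g\,ds$ is weakly continuous on $\mathbb{H}$. If the direct embedding fails, we would instead work through the isometry $h\mapsto K_H^*h\in L^2$, writing $\int_0^t f(s)h(s)ds$ as an $L^2$ pairing of $K_H^*h$ with a transformed kernel of $f\mathbf 1_{[0,t]}$, which is in $L^2$ for bounded $f$.
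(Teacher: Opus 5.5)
Your argument is correct at the paper's level of rigor, but it takes a genuinely different route.

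\textbf{The paper's route.} It argues by compactness and identification of the limit. It shows that $\{\bar X^{h^\varepsilon}\}$ is uniformly bounded and uniformly $\frac12$-H\"older, and extracts a uniformly convergent subsequence with limit $l$. It then passes to the limit in the integral form of (\ref{ske}): Lipschitz bounds and Cauchy--Schwarz handle $\int_0^t[\sigma(\bar X^{h^\varepsilon})-\sigma(l)]h^\varepsilon\,ds$, and weak convergence handles $\int_0^t\sigma(l_s)h^\varepsilon(s)\,ds$. Uniqueness for (\ref{ske}) then gives $l=\bar X^h$.

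\textbf{Your route.} You compare $\bar X^{h^\varepsilon}$ directly with the known solution $\bar X^h$. Gronwall with the integrable kernel $L(1+|h^\varepsilon|)$ reduces everything to your linear term $\Phi_\varepsilon(t)=\int_0^t\sigma(\bar X^h(s))(h^\varepsilon(s)-h(s))\,ds$. The Arzel\`a--Ascoli-type step (pointwise convergence plus equicontinuity) is then applied only to $\Phi_\varepsilon$, whose limit is known in advance to be $0$.

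\textbf{What each buys.} Your route gives convergence of the whole family without a subsequence argument. It also yields an explicit stability bound $\sup_t|\bar X^{h^\varepsilon}(t)-\bar X^h(t)|\le e^{L(T+C_N)}\sup_t|\Phi_\varepsilon(t)|$. The paper's route is more robust: its limit passage needs only uniform continuity of $b,\sigma$ on compacts plus uniqueness for (\ref{ske}). Yours uses the Lipschitz constant of $\sigma$ inside the Gronwall kernel.

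\textbf{A shared assumption.} Both proofs rely on an input that the definition of $S_N$ does not supply: a uniform $L^2$ bound on $h^\varepsilon$. The paper uses it in its H\"older and Cauchy--Schwarz estimates; you use it for the Gronwall kernel and for the equicontinuity of $\Phi_\varepsilon$.

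Your sentence about this has the inequality backwards. For $H>\frac12$ one only has $\|h\|_{\mathbb H}\lesssim\|h\|_{L^{1/H}}$. For example, $n^{H-1/2}\sin(nt)$ stays bounded in $\mathbb H$ while its $L^1$ and $L^2$ norms tend to infinity. So the bound must be taken as part of the framework, as the paper tacitly does.

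Once it is granted, your weak-continuity worry disappears. $\|\cdot\|_{\mathbb H}$ is a norm on $L^2$ dominated by $\|\cdot\|_{L^2}$, so every $L^2$-weak limit point of $(h^\varepsilon)$ equals $h$. Hence $h^\varepsilon\rightharpoonup h$ in $L^2$, and $\Phi_\varepsilon(t)\to0$ by pairing with the bounded function $\sigma(\bar X^h)^{\top}\mathbf 1_{[0,t]}$. Boundedness of that function alone would not give continuity with respect to $\|\cdot\|_{\mathbb H}$.
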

	
	\begin{proof}
		Let \(\bar{X}^{h}\) be the solution of (\ref{ske}) and \(\bar{X}^{h^{\varepsilon}}\) be the solution of (\ref{ske}) with \(h\) replaced by \(h^{\varepsilon}\). By the definition of \(\Gamma^{0}\), \(\bar{X}^{h}(t)=\Gamma^{0}\left(\int_{0}^{t}h(s)ds\right)\) and \(\bar{X}^{h^{\varepsilon}}(t)=\Gamma^{0}\left(\int_{0}^{t}{h}^{\varepsilon}(s)ds\right)\). Note that \(\bar{X}^{h},\bar{X}^{h^{\varepsilon}}\in C([0,T];\mathbb{R}^{d})\).
		
		Firstly, we prove that \(\{\bar{X}^{h^{\varepsilon}}\}_{\varepsilon>0}\) is pre-compact in \(C([0,T];\mathbb{R}^{d})\). It suffices to show that \(\{\bar{X}^{h^{\varepsilon}}\}_{\varepsilon>0}\) is uniformly bounded and equi-continuous in \(C([0,T];\mathbb{R}^{d})\). It follows from the boundedness of \(b\), \(\sigma\) and \(h^{\varepsilon}\in S_{N}\), that
		\begin{eqnarray*}
		\sup_{\varepsilon>0}\sup_{0\leq t\leq T}|\bar{X}^{h^{\varepsilon}}(t)|:=C_{N,T}<\infty.
		\end{eqnarray*}
		For \(t>s\),
		\begin{align*}
               |\bar{X}^{h^{\varepsilon}}(t)-\bar{X}^{h^{\varepsilon}}(s)| &\leq \int_{s}^{t}|b(\bar{X}^{h^{\varepsilon}}(r))|dr + \left|\int_{s}^{t}\sigma(\bar{X}^{h^{\varepsilon}}(r)){h}^{\varepsilon}(r)dr\right| \\
               &\leq C|t-s| + C\left(\int_0^{T}|{h}^{\varepsilon}(r)|^{2}dr\right)^{\frac{1}{2}}|t-s|^{\frac{1}{2}} \\
               &\leq C_{N,T}|t-s|^{\frac{1}{2}}.
            \end{align*}
		Therefore, \(\{\bar{X}^{h^{\varepsilon}}\}_{\varepsilon>0}\) is pre-compact in \(C([0,T];\mathbb{R}^{d})\).
		
		Let \(l\) be a limit of some subsequence of \(\{\bar{X}^{h^{\varepsilon}}\}_{\varepsilon>0}\). We will show that \(l=\bar{X}^{h}\) completing the proof of the proposition. Without loss of generality, we simply assume
		\begin{equation}\label{lt}
		\lim_{\varepsilon\to 0}\sup_{0\leq t\leq T}|l_{t}-\bar{X}^{h^{\varepsilon}}(t)|=0.
		\end{equation}
		Using the Lipschitz property of \(b\) and (\ref{lt}) we see that
		\begin{eqnarray*}
			\int_0^{T}|b(\bar{X}^{h^{\varepsilon}}(t))-b(l_{t})|dt
			\leq C\int_0^{T}|\bar{X}^{h^{\varepsilon}}(t)-l_{t}|dt\\
			\leq C\sup_{0\leq t\leq T}|l_{t}-\bar{X}^{h^{\varepsilon}}(t)|\to 0,\quad \varepsilon\to 0.
		\end{eqnarray*}
		Hence, for each \(t\in[0,T]\),
		\begin{equation}\label{bxls}
		\int_0^{t}b(\bar{X}^{h^{\varepsilon}}(s))ds\to\int_0^{t}b(l_{s})ds,\quad \varepsilon\to 0.
		\end{equation}
		Similarly, by the Lipschitz condition of \(\sigma\), (\ref{lt}) and Cauchy-Schwarz inequality, we have
		\begin{eqnarray*}
			\int_0^{T}|\sigma(\bar{X}^{h^{\varepsilon}}(t))h^\varepsilon(t)-\sigma(l_{t})h^\varepsilon(t)|dt
			\leq C\int_0^{T}|\bar{X}^{h^{\varepsilon}}(t)-l_{t}||h^\varepsilon(t)|dt\\
			\leq C\sup\limits_{0\leq t\leq T}|l_{t}-\bar{X}^{h^{\varepsilon}}(t)|\left(\int_0^{T}|h^\varepsilon(t)|^{2}dt\right)^{\frac{1}{2}}\to 0,\quad \varepsilon\to 0.
		\end{eqnarray*}
		Moreover, since \(h^{\varepsilon}\to h\) on \(S_{N}\), we have
		\begin{eqnarray*}
		\int_0^{t}\sigma(l_{s})h^\varepsilon(s)ds\rightarrow\int_0^{t}\sigma(l_{s})h(s)ds,\quad \varepsilon\to 0.
		\end{eqnarray*}
		Therefore, one can derive that
		\begin{equation}\label{sigmals}
		\int_0^{t}\sigma(\bar{X}^{h^{\varepsilon}}(s))h^\varepsilon(s)ds\rightarrow\int_0^{t}\sigma(l_{s})h(s)ds,\quad \varepsilon\to 0.
		\end{equation}
		Recall that \(\bar{X}^{h^{\varepsilon}}\) be the solution of (\ref{ske}) with \(h\) replaced by \(h^{\varepsilon}\):
		\begin{eqnarray*}
		\bar{X}^{h^{\varepsilon}}(t)=x_{0}+\int_0^{t}b(\bar{X}^{h^{\varepsilon}}(s))ds+\int_0^{t}\sigma(\bar{X}^{h^{\varepsilon}}(s))h^\varepsilon(s)ds.
		\end{eqnarray*}
		Letting \(\varepsilon\to 0\) and taking into account (\ref{bxls}) and (\ref{sigmals}), we see that \(l\) is a solution to (\ref{ske}), and the uniqueness of the solutions of (\ref{ske}) implies that \(l=\bar{X}^{h}\), which completes the proof.
	\end{proof}

	We are now in the position to verify the condition (ii) of Lemma \ref{LDP}.
	
	\begin{prop}\label{proba}
		Under the assumptions \textbf{(A1)}, \textbf{(A2)}, let \(\{h^{\varepsilon}\}_{\varepsilon>0}\subset\mathscr{A}_{N}\) for some \(N<\infty\). Then for any \(\delta>0\), we have
		\begin{eqnarray*}
			\lim\limits_{\varepsilon\to 0}\mathbb{P}\left\{d\left(\Gamma^{\varepsilon}\left({\varepsilon}^{H} B^H(t)
			+\int_0^{t}h^\varepsilon(s)ds\right),\Gamma^{0}\left(\int_0^{t}h^\varepsilon(s)ds\right)\right)>\delta\right\}
			&=& 0.
		\end{eqnarray*}
	\end{prop}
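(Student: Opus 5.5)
By Chebyshev's inequality it suffices to show that
\[
\mathbb{E}\sup_{0\le t\le T}\bigl|X^{\mu,\varepsilon,h^\varepsilon}(t)-\bar X^{h^\varepsilon}(t)\bigr|\to 0 \quad\text{as } \varepsilon\to 0,
\]
where $X^{\mu,\varepsilon,h^\varepsilon}$ solves the controlled system (\ref{pro}) and $\bar X^{h^\varepsilon}$ solves the skeleton equation (\ref{ske}) with $h$ replaced by $h^\varepsilon$. I would argue as in the proof of Theorem \ref{mainresult}, rewriting the slow component in integrated form. Solving the $Y$-equation by variation of constants and integrating it (the analogue of (\ref{newequationforx})) gives
\begin{align*}
X^{\mu,\varepsilon,h^\varepsilon}(t)={}&x_0+\mu y_0(1-e^{-t/\mu})+\int_0^t b(X^{\mu,\varepsilon,h^\varepsilon}(s))\,ds+\int_0^t\sigma(X^{\mu,\varepsilon,h^\varepsilon}(s))h^\varepsilon(s)\,ds\\
&-\mu\,\mathcal{R}^\varepsilon(t)+\varepsilon^H\int_0^t\bigl(1-e^{-(t-s)/\mu}\bigr)\sigma(X^{\mu,\varepsilon,h^\varepsilon}(s))\,dB^H(s).
\end{align*}
Here $\mathcal{R}^\varepsilon(t)$ is the drift part of the fast variable, that is, $\mu^{-1}\int_0^t e^{-(t-s)/\mu}\bigl[b(X)+\sigma(X)h^\varepsilon\bigr]\,ds$, and it plays the role of $\bar Y_2^\mu$. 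Equivalently, $\int_0^t Y\,ds$ splits into a $\bar Y_1$-type part, a $\bar Y_2$-type part, and a $\bar Y_3$-type stochastic part.

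Subtracting the skeleton equation, I would bound the difference by four terms.
\begin{itemize}
\item The initial-velocity term is at most $\mu|y_0|$, exactly as in Lemma \ref{esforby1}.
\item The Lipschitz terms are handled using \textbf{(A1)} and Cauchy--Schwarz. The $b$-difference gives $\int_0^t\sup_{r\le s}|X-\bar X|\,ds$. The $\sigma$-difference gives $\int_0^t\|\sigma(X)-\sigma(\bar X)\||h^\varepsilon|\,ds$, which is bounded by $L\int_0^t\sup_{r\le s}|X-\bar X|\,|h^\varepsilon(s)|\,ds$.
\item The remainder $\mu\,\mathcal{R}^\varepsilon$ uses the boundedness in \textbf{(A2)}. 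We have $|\mathcal{R}^\varepsilon(t)|\le C+\mu^{-1}\int_0^t e^{-(t-s)/\mu}|h^\varepsilon(s)|\,ds$, and by Cauchy--Schwarz the second part is at most $C\mu^{-1/2}\|h^\varepsilon\|_{L^2}$. Hence $\mu\sup_t|\mathcal{R}^\varepsilon(t)|\lesssim \mu+\mu^{1/2}N^{1/2}$. This requires an $L^2$ bound on $h^\varepsilon$ in terms of the $\mathbb{H}$-norm; I take that as given, as the paper implicitly does in Proposition \ref{con}.
\item The stochastic term is handled as in Lemma \ref{thmforlinearEsup}: write $\int_0^t(1-e^{-(t-s)/\mu})\sigma\,dB^H$ as $\int_0^t\sigma\,dB^H$ minus the convolution $\int_0^t e^{-(t-s)/\mu}\sigma\,dB^H$.
\end{itemize}

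With a random integrand, the full stochastic term is controlled via the maximal inequality (\ref{maximal_inequality}). The $|\mathbb{E}\sigma|$ part is bounded using \textbf{(A2)}. The Malliavin-derivative part requires $D_s\sigma(X)$ to be bounded; this is where an assumption like \textbf{(A3)} enters, or else a bound on $D_sX$ obtained from the linear equation for the derivative with $b,\sigma$ Lipschitz. Either way the whole term is of order $\varepsilon^H C_T$ and therefore vanishes.

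Collecting the estimates gives
\[
\sup_{s\le t}|X-\bar X|\le \int_0^t C\bigl(1+|h^\varepsilon(s)|\bigr)\sup_{r\le s}|X-\bar X|\,dr+Z^\varepsilon,
\]
where $Z^\varepsilon$ collects the small terms and satisfies $\mathbb{E}Z^\varepsilon\lesssim \mu+\mu^{1/2}+\varepsilon^H$. I would apply a pathwise Gronwall inequality first and take expectations afterwards. Since $h^\varepsilon\in\mathscr{A}_N$, the factor $\exp\bigl(C\int_0^T(1+|h^\varepsilon|)\,ds\bigr)\le e^{C(T+\sqrt{TN})}$ is deterministic. This yields $\mathbb{E}\sup_{t\le T}|X-\bar X|\lesssim_{N,T}\mu(\varepsilon)^{1/2}+\varepsilon^H\to0$, which proves the proposition.

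The main obstacle is the supremum-in-time bound on the stochastic convolution with the multiplicative, random integrand $\sigma(X^{\mu,\varepsilon,h^\varepsilon})$. The kernel $e^{-(t-s)/\mu}$ depends on $t$, so (\ref{maximal_inequality}) does not apply directly. I would first try integration by parts: write the convolution as $\mu^{-1}\int_0^t e^{-(t-s)/\mu}\bigl(M(t)-M(s)\bigr)\,ds+e^{-t/\mu}M(t)$ with $M(t)=\int_0^t\sigma\,dB^H$. This gives $\sup_t|\text{conv}|\le 3\sup_t|M(t)|$, which reduces everything to $\mathbb{E}\sup_t|M|$, and that is covered by (\ref{maximal_inequality}). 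Under the divergence interpretation, however, the identity $M(t)-M(s)=\int_s^t$ together with the product rule may generate trace correction terms involving $D_s\sigma$. These must be controlled by the Malliavin derivative bound.
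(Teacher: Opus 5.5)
Your argument is correct and follows the paper's overall structure. Both proofs write the slow variable in integrated form by variation of constants, subtract the skeleton equation, and use Lipschitz and Cauchy--Schwarz bounds for the $b$- and $\sigma h^\varepsilon$-differences. Both then control the noise with (\ref{maximal_inequality}) under \textbf{(A3)}, and finish with Gronwall and Chebyshev. Like the paper, you need \textbf{(A3)} beyond the stated \textbf{(A1)}--\textbf{(A2)}, and also $\int_0^T|h^\varepsilon|^2\,ds\le CN$, which the paper uses silently in this very proof. You were right to flag both.

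The routes differ in two places.

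\textbf{Choice of moment.} The paper works with second moments. Cauchy--Schwarz with the a.s.\ bound on $\int_0^T|h^\varepsilon|^2\,ds$ pulls out the deterministic constant $N$ before expectations are taken, so an ordinary Gronwall for $t\mapsto\mathbb{E}\sup_{r\le t}|X-\bar X|^2$ suffices. You use first moments and a pathwise Gronwall with the random weight $C(1+|h^\varepsilon|)$, whose exponential is deterministically bounded. Both give the rate $\mu^{1/2}+\varepsilon^H$.

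\textbf{The stochastic convolution.} For $\mathbb{E}\sup_t\bigl|\int_0^t e^{-(t-s)/\mu}\sigma(X)\,dB^H(s)\bigr|^2$, the paper applies (\ref{maximal_inequality}) with the supremum in $t$ moved outside the expectation. That inequality only controls $\mathbb{E}\sup_t|\int_0^t u\,dB^H|^p$ for an integrand $u$ that does not depend on $t$, so the step is not justified as written. Your reduction to $3\sup_t|M(t)|$ with $M(t)=\int_0^t\sigma(X)\,dB^H$ fixes this. You lose the smallness in $\mu$ of that term, but the prefactor $\varepsilon^H$ already makes it vanish.

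You can drop the worry about trace corrections. For fixed $t$ the kernel $f(s)=e^{-(t-s)/\mu}$ is deterministic. For step approximations $f_n=\sum_i f(t_i)\mathbf{1}_{(t_i,t_{i+1}]}$, linearity of the divergence integral gives $\int_0^t f_n(s)\sigma(X(s))\,dB^H(s)=\sum_i f(t_i)\bigl(M(t_{i+1})-M(t_i)\bigr)$ with no correction term. Letting $n\to\infty$ (with $f_n\to f$ uniformly) identifies the divergence integral with the Riemann--Stieltjes integral $\int_0^t f\,dM$.

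More directly, the whole noise term equals $\varepsilon^H\mu^{-1}\int_0^t e^{-(t-s)/\mu}M(s)\,ds$, so it is bounded by $\varepsilon^H\sup_{s\le T}|M(s)|$.

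Finally, (\ref{maximal_inequality}) requires $p>1/H>1$. Obtain $\mathbb{E}\sup_t|M(t)|$ from the case $p=2$ via Cauchy--Schwarz.
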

	
	\begin{proof}
		Recall that \(X^{\mu,\varepsilon,h^{\varepsilon}}(t)=\Gamma^{\varepsilon}\left({\varepsilon}^{H}B^H(t)+\int_0^{t}h^\varepsilon(s)ds\right)\) and \(\bar{X}^{h^{\varepsilon}}(t)=\Gamma^{0}\left(\int_{0}^{t}h^\varepsilon(s)ds\right)\). Note that \(X^{\mu,\varepsilon,h^{\varepsilon}}(t)\) is the solution of (\ref{pro}) and \(\bar{X}^{h^{\varepsilon}}(t)\) is the solution of (\ref{ske}) with \(h\) replaced by \(h^{\varepsilon}\).
		
		Firstly, from (\ref{pro}) we can derive that
		
		\begin{align}\label{xtx}
                   X^{\mu,\varepsilon,h^{\varepsilon}}(t) 
                &= x_{0} + \int_0^{t} y_{0} e^{-\frac{s}{\mu}} ds 
                + \frac{1}{\mu} \int_0^{t} e^{-\frac{s}{\mu}} \left( \int_0^{s} e^{\frac{r}{\mu}} b(X^{\mu,\varepsilon,h^{\varepsilon}}(r)) dr \right) ds \nonumber \\
                &\quad + \frac{1}{\mu} \int_0^{t} e^{-\frac{s}{\mu}} \left( \int_0^{s} e^{\frac{r}{\mu}} \sigma(X^{\mu,\varepsilon,h^{\varepsilon}}(r)) h^{\varepsilon}(r) dr \right) ds \nonumber \\
                &\quad + \frac{\varepsilon^H}{\mu} \int_0^{t} e^{-\frac{s}{\mu}} \left( \int_0^{s} e^{\frac{r}{\mu}} \sigma(X^{\mu,\varepsilon,h^{\varepsilon}}(r)) dB^H(r) \right) ds.
            \end{align}
		
		We derive from (\ref{xtx}), after integrating by part,
		
		\begin{align}
                X^{\mu,\varepsilon,h^{\varepsilon}}(t) 
                &= x_{0} + \mu y_{0}(1 - e^{-\frac{t}{\mu}})
                + \int_0^{t} b(X^{\mu,\varepsilon,h^{\varepsilon}}(s)) ds
                - e^{-\frac{t}{\mu}} \int_0^{t} e^{\frac{s}{\mu}} b(X^{\mu,\varepsilon,h^{\varepsilon}}(s)) ds \nonumber \\
                &\quad + \int_0^{t} \sigma(X^{\mu,\varepsilon,h^{\varepsilon}}(s)) h^\varepsilon(s) ds
                - e^{-\frac{t}{\mu}} \int_0^{t} e^{\frac{s}{\mu}} \sigma(X^{\mu,\varepsilon,h^{\varepsilon}}(s)) h^\varepsilon(s) ds \nonumber \\
                &\quad + {\varepsilon}^H \int_0^{t} \sigma(X^{\mu,\varepsilon,h^{\varepsilon}}(s)) dB^H(s)
                - e^{-\frac{t}{\mu}} \int_0^{t} e^{\frac{s}{\mu}} {\varepsilon}^H \sigma(X^{\mu,\varepsilon,h^{\varepsilon}}(s)) dB^H(s).
            \end{align}
		Note that
		
		\begin{align*}
                X^{\mu,\varepsilon,h^{\varepsilon}}(t) - {\bar{X}}^{h^{\varepsilon}}(t) 
                &= \mu y_{0}(1 - e^{-\frac{t}{\mu}})
                + \int_0^{t} \left[b(X^{\mu,\varepsilon,h^{\varepsilon}}(s)) - b({\bar{X}}^{h^{\varepsilon}}(s))\right] ds \\
                &\quad + \int_0^{t} \left[\sigma(X^{\mu,\varepsilon,h^{\varepsilon}}(s))h^\varepsilon(s)- \sigma({\bar{X}}^{h^{\varepsilon}}(s))h^\varepsilon(s)\right] ds \\
                &\quad + {\varepsilon}^H \int_0^{t} \sigma(X^{\mu,\varepsilon,h^{\varepsilon}}(s)) dB^H(s)
                - e^{-\frac{t}{\mu}} \int_0^{t} e^{\frac{s}{\mu}} b(X^{\mu,\varepsilon,h^{\varepsilon}}(s)) ds \\
                &\quad - e^{-\frac{t}{\mu}} \int_0^{t} e^{\frac{s}{\mu}} \sigma(X^{\mu,\varepsilon,h^{\varepsilon}}(s)) h^\varepsilon(s) ds \\
                &\quad - e^{-\frac{t}{\mu}} \int_0^{t} e^{\frac{s}{\mu}} {\varepsilon}^H \sigma(X^{\mu,\varepsilon,h^{\varepsilon}}(s)) dB^H(s).
            \end{align*}
		Then we have
		
		\begin{align} \label{estimate}
                    \mathbb{E}\sup_{0\leq t\leq T}\left|X^{\mu,\varepsilon,h^{\varepsilon}}(t) - {\bar{X}}^{h^{\varepsilon}}(t)\right|^{2}
                    &\leq \mu^{2}C + C\mathbb{E}\int_0^{T}\left|b(X^{\mu,\varepsilon,h^{\varepsilon}}(s)) - b({\bar{X}}^{h^{\varepsilon}}(s))\right|^{2}ds\nonumber\\
                    &\quad + C\mathbb{E}\left|\int_0^{T}\left[\sigma(X^{\mu,\varepsilon,h^{\varepsilon}}(s)) - \sigma({\bar{X}}^{h^{\varepsilon}}(s))\right]h^\varepsilon(s)ds\right|^{2}\nonumber\\
                    &\quad + C\mathbb{E}\sup_{0\leq t\leq T}\left|\int_0^{t}{\varepsilon}^H\sigma(X^{\mu,\varepsilon,h^{\varepsilon}}(s))dB^H(s)\right|^{2}\nonumber\\
                    &\quad + C\mathbb{E}\sup_{0\leq t\leq T}\left|\int_0^{t}e^{\frac{s-t}{\mu}}b(X^{\mu,\varepsilon,h^{\varepsilon}}(s))ds\right|^{2}\nonumber\\
                    &\quad + C\mathbb{E}\sup_{0\leq t\leq T}\left|\int_0^{t}e^{\frac{s-t}{\mu}}\sigma(X^{\mu,\varepsilon,h^{\varepsilon}}(s)){h}^{\varepsilon}(s)ds\right|^{2}\nonumber\\
                    &\quad + C\mathbb{E}\sup_{0\leq t\leq T}\left|\int_0^{t}e^{\frac{s-t}{\mu}}\sigma(X^{\mu,\varepsilon,h^{\varepsilon}}(s))dB^H(s)\right|^{2}.
            \end{align}
		By assumption \textbf{(A3)} and the maximal inequality (\ref{maximal_inequality}), we have
		
		\begin{align} \label{eq:stochastic_integral_estimate}
                & C\mathbb{E}\sup_{0\leq t\leq T}\left|\int_0^{t}{\varepsilon}^H\sigma(X^{\mu,\varepsilon,h^{\varepsilon}}(s))dB^H(s)\right|^{2} \nonumber\\
                &\leq C \left[ \int_0^T \left| \mathbb{E}(\varepsilon^H \sigma(X^{\mu,\varepsilon,h^{\varepsilon}}(s))) \right|^2 ds 
                + \mathbb{E}\int_0^T \left( \int_0^T |D_{s} (\varepsilon^H \sigma(X^{\mu,\varepsilon,h^{\varepsilon}}(r)))|^{\frac{1}{H}}ds \right)^{2H} dr \right] \nonumber\\
                &\leq C \left( \varepsilon^{2H} K^2 T + \int_0^T \varepsilon^{2H} K^2 T^{2H} dr \right) 
                = C_{K,T} \varepsilon^{2H}.
            \end{align}
		Similarly,
		\begin{align} \label{eq:exponential_integral_estimate}
                & C\mathbb{E}\sup_{0\leq t\leq T}\left|\int_0^{t}e^{\frac{s-t}{\mu}}\sigma(X^{\mu,\varepsilon,h^{\varepsilon}}(s))dB^H(s)\right|^{2} \nonumber\\
                &\leq C \sup_{0 \leq t \leq T} \biggl[ \int_0^t \left| \mathbb{E}(e^{\frac{s-t}{\mu}} \sigma(X^{\mu,\varepsilon,h^{\varepsilon}}(s)) \right|^2 ds \nonumber\\
                &\quad + \mathbb{E}\int_0^t \left( \int_0^t |D_{s} (e^{\frac{r-t}{\mu}} \sigma(X^{\mu,\varepsilon,h^{\varepsilon}}(r))|^{\frac{1}{H}}ds \right)^{2H} dr \biggr] \nonumber\\
                &\leq C \sup_{0 \leq t \leq T} \left[ \frac{\mu}{2} (1-e^{-\frac{2t}{\mu}}) K^2 + \int_0^t K^2 e^{\frac{2(r-t)}{\mu}} t^{2H} dr \right] \nonumber\\
                &\leq C K^2 \frac{\mu}{2} \left(1 + T^{2H}\right) = C_{K,T} \mu.
            \end{align}
		
		Then using the assumption \textbf{(A1)}, \textbf{(A2)}, and Cauchy-Schwarz inequality, and substituting \eqref{eq:stochastic_integral_estimate} and \eqref{eq:exponential_integral_estimate} into \eqref{estimate}, we have
		\begin{align*}
                \mathbb{E}\sup_{0\leq t\leq T}&\left|X^{\mu,\varepsilon,h^{\varepsilon}}(t) - {\bar{X}}^{h^{\varepsilon}}(t)\right|^{2} \\
                &\leq \mu^2 C + C\mathbb{E}\int_0^{T}\left|X^{\mu,\varepsilon,h^{\varepsilon}}(s) - {\bar{X}}^{h^{\varepsilon}}(s)\right|^{2}ds \\
                &\quad + C\mathbb{E}\left(\int_0^{T}\left|\sigma(X^{\mu,\varepsilon,h^{\varepsilon}}(s)) - \sigma({\bar{X}}^{h^{\varepsilon}}(s))\right|^{2}ds\right)\left(\int_0^{T}|{h}^{\varepsilon}(s)|^{2}ds\right)\\
                &\quad + C_{K,T} \varepsilon^{2H} + C  \mathbb{E}  \sup_{0 \leq t \leq T} \left| \int_0^t e^{\frac{s-t}{\mu}}ds \right|^2 \\
                &\quad + C  \mathbb{E} \sup_{0 \leq t \leq T} \left| \int_0^t e^{\frac{s-t}{\mu}}{h}^{\varepsilon}(s)ds \right|^2 + C_{K,T} \mu \\
                &\leq \mu^2C + C\mathbb{E}\int_0^{T}\left|X^{\mu,\varepsilon,h^{\varepsilon}}(s) - {\bar{X}}^{h^{\varepsilon}}(s)\right|^{2}ds\\
                &\quad + CN\cdot \mathbb{E}\int_0^{T}\left|X^{\mu,\varepsilon,h^{\varepsilon}}(s) - {\bar{X}}^{h^{\varepsilon}}(s)\right|^{2}ds +  C_{K,T} \varepsilon^{2H}+C\mu^{2} \\
                &\quad + C\mathbb{E}\sup_{0\leq t\leq T}\left(\int_0^{t}e^{\frac{2(s-t)}{\mu}}ds\right)\left(\int_0^{T}|{h}^{\varepsilon}(s)|^{2}ds\right) + C_{K,T} \mu \\
                &\leq \mu^2C + C_N\mathbb{E}\int_0^{T}\left|X^{\mu,\varepsilon,h^{\varepsilon}}(s) - {\bar{X}}^{h^{\varepsilon}}(s)\right|^{2}ds + C_{K,T}\varepsilon^{2H} \\
                &\quad + C\mu^2 + C\mu N + C_{K,T} \mu.
            \end{align*}
		
		We derive that
		\begin{align*}
                \mathbb{E}\sup_{0\leq t\leq T}|X^{\mu,\varepsilon,h^{\varepsilon}}(t)-\bar{X}^{h^{\varepsilon}}(t)|^{2} 
                &\leq C\int_0^{T}\mathbb{E}\sup_{0\leq r\leq s}|X^{\mu,\varepsilon,h^{\varepsilon}}(r)-\bar{X}^{h^{\varepsilon}}(r)|^{2}ds \\
                &\quad + (\mu+\varepsilon^{2H})C_{N,K,T}.
            \end{align*}
		Then the Gronwall's inequality yields
		\begin{align*}
                 \mathbb{E}\sup_{0\leq t\leq T}|X^{\mu,\varepsilon,h^{\varepsilon}}(t)-\bar{X}^{h^{\varepsilon}}(t)|^{2} 
                 &\leq (\mu+\varepsilon^{2H})C_{N,K,T}\cdot e^{CT}.
            \end{align*}
		Applying the Chebyshev's inequality, for any \(\delta>0\) we have
		
		\begin{align*}
                &\mathbb{P}\left\{d\left(\Gamma^{\varepsilon}\left({\varepsilon}^HB^H(\cdot)+\int_0^{\cdot}{h}^{\varepsilon}(s)ds\right),\Gamma^{0}\left(\int_0^{\cdot}{h}^{\varepsilon}(s)ds\right)\right)>\delta\right\} \\
                &= \mathbb{P}\left\{\sup_{0\leq t\leq T}|X^{\mu,\varepsilon,h^{\varepsilon}}(t)-\bar{X}^{h^{\varepsilon}}(t)|>\delta\right\} \\
                &\leq \frac{\mathbb{E} \sup_{0\leq t\leq T}|X^{\mu,\varepsilon,h^{\varepsilon}}(t)-\bar{X}^{h^{\varepsilon}}(t)|^{2}}{\delta^{2}}\to 0,
            \end{align*}
		as \(\varepsilon\to 0\), \(\mu=\mu(\varepsilon)\to 0\).
		The proof is complete.
	\end{proof}

	{\bf Proof of Theorem \ref{ldpresult}.}
	Combining Propositions \ref{con} and \ref{proba}, it follows by Lemma \ref{LDP} that \(\{X^{\mu,\varepsilon}\}_{\varepsilon>0}\) satisfies a large deviation principle in \(C([0,T];\mathbb{R}^{d})\) with the rate function \(I\) given by
		\begin{align*}
                 I(g) = \inf_{\substack{h\in\mathbb{H},\\ 
                 {g}(t)=x_{0}+\int_0^{t}b(g(s))ds+\int_0^{t}\sigma(g(s))h(s)ds}}  
                 \left\{\frac{1}{2}\|h\|_{\mathbb{H}}^2\right\}.
            \end{align*}
            This completes the proof.	\qed
	
\section{Moderate deviations}\label{sec mod}

In this section we investigate deviations of \(X^{\mu,\varepsilon}(t)\) from the deterministic solution \(\bar{X}^0(t)\) defined by (\ref{sim}), as \(\varepsilon\) tends to \(0\), that is, the asymptotic behavior of the trajectory,
\begin{eqnarray*}
	\eta^{\varepsilon}(t) &=& \frac{X^{\mu,\varepsilon}(t) - \bar{X}^0(t)}{{\varepsilon}^H\lambda(\varepsilon)},
\end{eqnarray*}
where \(\lambda(\varepsilon)\) is some deviation scale which strongly influences the asymptotic behavior of \(\eta^{\varepsilon}(t)\).

Note that in Section \ref{sec large} we have studied the large deviation which solves the above deviation problem when \(\lambda(\varepsilon) = 1/{\varepsilon}^H\). If \(\lambda(\varepsilon) = 1\), we are in a position to study central limit theorem which is another subject we will discuss in the forthcoming paper.

To fill in the gap between the central limit theorem scale and the large deviations scale, we will study the moderate deviations for \(X^{\mu,\varepsilon}\), that is when the deviation scale satisfies
\begin{eqnarray*}
	\lambda(\varepsilon) \to \infty, \quad {\varepsilon}^H\lambda(\varepsilon) \to 0, \quad \text{as} \quad \varepsilon \to 0.
\end{eqnarray*}

We furthermore introduce the following assumption.

\textbf{(A4)}. The coefficient \(b(x)\) is differentiable with respect to \(x\), and its derivative functions satisfy
\[
\| \nabla b(x) \| \leq C, \quad \text{and} \quad \| \nabla b(x) - \nabla b(y) \| \leq C |x - y|.
\]

As an immediate consequence of Theorem 4.6 in \cite{liu2023large}, we present the following lemma to prove the moderate deviation principle for \(X^{\mu,\varepsilon}\) in our current setting. For a specific proof we also refer the reader to
\cite[Theorem 9.9]{budhiraja2019analysis} and \cite[Theorem 3.2]{matoussi2021large}.

\begin{lemma}\label{mdp}
	For any \(\varepsilon > 0\), let \(\Upsilon^{\varepsilon}\) be a measurable mapping from \(C([0,T];\mathbb{R}^d)\) into \(C([0,T];\mathbb{R}^d)\). Suppose that \(\{\Upsilon^{\varepsilon}\}_{\varepsilon > 0}\) satisfies the following assumptions: there exists a measurable map \(\Upsilon^0 : C([0,T];\mathbb{R}^d) \to C([0,T];\mathbb{R}^d)\) such that
	
	\begin{itemize}
		\item[(i)] Let \(\{h^{\varepsilon}\}_{\varepsilon > 0} \subset S_N\) for some \(N < \infty\) such that \(h^{\varepsilon}\) converges to element \(h\) in \(S_N\) as \(\varepsilon \to 0\), then \(\Upsilon^0 \left( \int_0^{\cdot} {h}^{\varepsilon}(s) ds \right)\) converges to \(\Upsilon^0 \left( \int_0^{\cdot} h(s) ds \right)\) in \(C([0,T];\mathbb{R}^d)\);
		
		\item[(ii)] Let \(\{h^{\varepsilon}\}_{\varepsilon > 0} \subset \mathscr{A}_N\) for some \(N < \infty\). For any \(\delta > 0\), we have
		\[
		\lim_{\varepsilon \to 0} \mathbb{P} \left\{ d \left( \Upsilon^{\varepsilon} \left( \frac{1}{\lambda(\varepsilon)} B^H(\cdot) + \int_0^{\cdot} {h}^{\varepsilon}(s) ds \right), \Upsilon^0 \left( \int_0^{\cdot} {h}^{\varepsilon}(s) ds \right) \right) > \delta \right\} = 0,
		\]
		where \(d(\cdot,\cdot)\) denotes the metric in the space \(C([0,T];\mathbb{R}^d)\).
	\end{itemize}
	
	Let \(X^{\varepsilon} = \Upsilon^{\varepsilon} \left( \frac{1}{\lambda(\varepsilon)} B^H(\cdot) \right)\), then the family \(\{X^{\varepsilon}\}_{\varepsilon > 0}\) satisfies a large deviation principle in \(C([0,T];\mathbb{R}^d)\) with speed \(\lambda^{-2}(\varepsilon)\) and rate function \(I\) given by
	
	\[
	I(g) = \inf_{h \in \mathbb{H},\; g = \Upsilon^0 \left( \int_0^{\cdot} h(s) ds \right)} \left\{ \frac{1}{2} \|h\|_{\mathbb{H}}^2 \right\}, \quad g \in C([0,T];\mathbb{R}^d),
	\]
	with \(\inf \emptyset = \infty\) by convention.
\end{lemma}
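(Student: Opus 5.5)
The plan is to reduce the statement to Lemma \ref{LDP} (equivalently, to the abstract criterion of Lemma \ref{ldp}) by a change of the small parameter. The only difference between the two statements is that the noise is scaled by $1/\lambda(\varepsilon)$ instead of $\varepsilon^{H}$, and the speed is $\lambda^{-2}(\varepsilon)$ instead of $\varepsilon^{2H}$.

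First, I reduce to sequences. A large deviation principle, equivalently the Laplace principle since the rate function will have compact level sets, holds as $\varepsilon\to 0$ if and only if it holds along every sequence $\varepsilon_n\downarrow 0$. This is because the Laplace limit is a limit of real numbers for each fixed bounded continuous $f$. So I fix $\varepsilon_n\to 0$ and set $\delta_n:=\lambda(\varepsilon_n)^{-1/H}$. Since $\lambda(\varepsilon)\to\infty$, we get $\delta_n\to 0$, together with $\delta_n^{H}=1/\lambda(\varepsilon_n)$ and $\delta_n^{2H}=\lambda^{-2}(\varepsilon_n)$. Defining $\Gamma^{\delta_n}:=\Upsilon^{\varepsilon_n}$ and $\Gamma^0:=\Upsilon^0$, we obtain $X^{\varepsilon_n}=\Gamma^{\delta_n}(\delta_n^{H}B^H)$. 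Passing to sequences avoids any monotonicity or injectivity requirement on $\lambda$. Hypothesis (i) of the present lemma is literally condition (i) of Lemma \ref{LDP}. Hypothesis (ii), restricted to $\varepsilon_n$ and any controls $h^{n}\in\mathscr{A}_N$, is condition (ii) of Lemma \ref{LDP} for the family $\Gamma^{\delta_n}$. Lemma \ref{LDP} is stated for a continuum parameter, but its proof (via Lemma \ref{ldp}) only uses sequential arguments: weak convergence along subsequences and the Laplace upper and lower bounds. Hence it applies verbatim along $\delta_n$. It then yields the Laplace principle with speed $\delta_n^{2H}=\lambda^{-2}(\varepsilon_n)$ and the stated rate function $I$. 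Since the limit $\inf_g[I(g)+f(g)]$ does not depend on the chosen sequence, the full statement follows.

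To be self-contained, I would also indicate how the sequential version is proved. The fBm is represented as $B^H=\int_0^\cdot K_H(\cdot,s)\,dW(s)$, and the Boué--Dupuis variational formula is applied to $W$:
\[
-\lambda^{-2}\ln\mathbb{E}\,e^{-\lambda^{2}f(X^\varepsilon)}=\inf_{u}\mathbb{E}\Bigl[\tfrac12\int_0^T|u|^2ds+f\bigl(\Upsilon^\varepsilon(\lambda^{-1}B^H+\textstyle\int_0^\cdot h\,ds)\bigr)\Bigr],
\]
where $u=K_H^*h$, so that $\int_0^T|u|^2ds=\|h\|_{\mathbb{H}}^2$ by the isometry recalled in Section \ref{Pre}.

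\textbf{Upper bound.} I take nearly optimal controls, truncate them to lie in $\mathscr{A}_N$ at cost $o(1)$, and extract a subsequence with $h^\varepsilon\Rightarrow h$ in $S_N$, which is compact metrizable in the weak topology. Condition (ii) together with condition (i) and Skorokhod representation then give
\[
\Upsilon^\varepsilon\bigl(\lambda^{-1}B^H+\textstyle\int_0^\cdot h^\varepsilon ds\bigr)\Rightarrow \Upsilon^0\bigl(\textstyle\int_0^\cdot h\,ds\bigr).
\]
Combined with weak lower semicontinuity of the norm, this yields the bound $\ge\inf[I+f]$.

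\textbf{Lower bound.} I choose a deterministic $h$ that is nearly optimal for $\inf[I+f]$ and use the constant control $h^\varepsilon\equiv h$. Then (ii) gives convergence in probability to $\Upsilon^0(\int_0^\cdot h\,ds)$, and the bound follows.

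\textbf{Compact level sets of $I$.} These follow from (i) and the compactness of $S_N$: the set $\{I\le M\}$ is contained in the continuous image of $S_{2M+\eta}$ for every $\eta>0$, and it is closed by a diagonal argument.

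I expect the main obstacle to be the measurability and admissibility in the variational step: ensuring that the shifted process $\lambda^{-1}B^H+\int_0^\cdot h\,ds$ with predictable $h\in\mathscr{A}_N$ corresponds, via $K_H^*$ and Girsanov's theorem for $W$, to an admissible control $u$. Also needed is the fact that $\mathscr{A}_N$-valued random controls in $S_N$ are tight. Both points are exactly what Lemma \ref{ldp} already encodes, so in the write-up I would rely on the reparametrization of the first paragraph rather than redo them.
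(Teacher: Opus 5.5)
Your proposal is correct and follows essentially the same route as the paper, which gives no argument beyond calling the lemma an immediate consequence of the abstract weak-convergence criterion (Theorem 4.6 of \cite{liu2023large}, equivalently Lemma \ref{LDP}, i.e.\ \cite[Theorem 3.2]{matoussi2021large}). Your change of parameter $\delta_n=\lambda(\varepsilon_n)^{-1/H}$, done along sequences so that no monotonicity or injectivity of $\lambda$ is needed, is exactly what makes that ``immediate consequence'' precise, and your sketch of the Laplace bounds and the compactness of level sets matches the standard proof of the cited criterion.
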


In analogy to the case demonstrating large deviation problems, we establish the moderate deviation principle for \(X^{\mu,\varepsilon}(t)\) by first finding the measurable maps \(\Upsilon^{\varepsilon}\) and \(\Upsilon^0\) and then proving that they satisfy the conditions (i) and (ii) in Lemma \ref{mdp}.

On the one hand, from (\ref{fast}) and(\ref{sim}), \(\eta^{\varepsilon}(t) = \frac{X^{\mu,\varepsilon}(t) - \bar{X}^{0}(t)}{\varepsilon^{H} \lambda(\varepsilon)}\) satisfies
\begin{align*}
        \eta^\varepsilon(t) 
        &= \frac{\mu}{\varepsilon^H \lambda(\varepsilon)} y_0 (1 - e^{-\frac{t}{\mu}})  + \frac{1}{\varepsilon^H \lambda(\varepsilon)} \int_0^t \left[ b(\bar{X}^{0}(s) + \varepsilon^H \lambda(\varepsilon) \eta^\varepsilon(s)) - b(\bar{X}^{0}(s)) \right] ds \\
        &\quad - \frac{1}{\varepsilon^H \lambda(\varepsilon)} e^{-\frac{t}{\mu}} \int_0^t e^{\frac{s}{\mu}} b(\bar{X}^{0}(s) + \varepsilon^H \lambda(\varepsilon) \eta^\varepsilon(s)) ds \\
        &\quad + \frac{1}{\lambda(\varepsilon)} \int_0^t \sigma(\bar{X}^{0}(s) + \varepsilon^H \lambda(\varepsilon) \eta^\varepsilon(s)) \, dB^H(s) \\
        &\quad - \frac{1}{\lambda(\varepsilon)} e^{-\frac{t}{\mu}} \int_0^t e^{\frac{s}{\mu}} \sigma(\bar{X}^{0}(s) + \varepsilon^H \lambda(\varepsilon) \eta^\varepsilon(s)) \, dB^H(s).
\end{align*}

There exists a measurable map \(\Upsilon^{\varepsilon} : C([0,T];\mathbb{R}^d) \to C([0,T];\mathbb{R}^d)\) such that we have the representation \(\eta^{\varepsilon}(t) = \Upsilon^{\varepsilon} \left( \frac{1}{\lambda(\varepsilon)} B^H(t) \right)\).

Then for any \(h^{\varepsilon} \in \mathscr{A}_N\) let us define

\begin{align*}
\eta^{\varepsilon,h^{\varepsilon}} = \Upsilon^{\varepsilon} \left( \frac{1}{\lambda(\varepsilon)} B^H(\cdot) 
+ \int_0^{\cdot} {h}^{\varepsilon}(s) \, ds \right),
\end{align*}

then \(\eta^{\varepsilon,h^{\varepsilon}}\) satisfies 
\begin{align} \label{eta}
        \eta^{\varepsilon,h^\varepsilon}(t) 
        &= \frac{\mu}{\varepsilon^H \lambda(\varepsilon)} y_0 (1 - e^{-\frac{t}{\mu}}) \nonumber  + \frac{1}{\varepsilon^H \lambda(\varepsilon)} \int_0^t \left[ b(\bar{X}^{0}(s) + \varepsilon^H \lambda(\varepsilon)\eta^{\varepsilon,h^\varepsilon}(s)) - b(\bar{X}^{0}(s)) \right] ds \nonumber \\
        &\quad - \frac{1}{\varepsilon^H \lambda(\varepsilon)} e^{-\frac{t}{\mu}} \int_0^t e^{\frac{s}{\mu}} b(\bar{X}^{0}(s) + \varepsilon^H \lambda(\varepsilon)\eta^{\varepsilon,h^\varepsilon}(s)) ds \nonumber \\
        &\quad + \int_0^t \sigma(\bar{X}^{0}(s) + \varepsilon^H \lambda(\varepsilon)\eta^{\varepsilon,h^\varepsilon}(s)) {h}^{\varepsilon}(s) \, ds \nonumber \\
        &\quad - e^{-\frac{t}{\mu}} \int_0^t e^{\frac{s}{\mu}} \sigma(\bar{X}^{0}(s) + \varepsilon^H \lambda(\varepsilon)\eta^{\varepsilon,h^\varepsilon}(s)) {h}^{\varepsilon}(s) \, ds \nonumber \\
        &\quad + \frac{1}{\lambda(\varepsilon)} \int_0^t \sigma(\bar{X}^{0}(s) + \varepsilon^H \lambda(\varepsilon) \eta^{\varepsilon,h^\varepsilon}(s)) \, dB^H(s) \nonumber \\
        &\quad - \frac{1}{\lambda(\varepsilon)} e^{-\frac{t}{\mu}} \int_0^t e^{\frac{s}{\mu}} \sigma(\bar{X}^{0}(s) + \varepsilon^H \lambda(\varepsilon) \eta^{\varepsilon,h^\varepsilon}(s)) \, dB^H(s).
\end{align}

On the other hand, we define the following skeleton equation:

\begin{equation}\label{skel}
\frac{d\tilde{X}^h(t)}{dt} = \nabla b(\bar{X}^0(t))\tilde{X}^h(t) + \sigma(\bar{X}^0(t)) h(t), \quad \tilde{X}^h(0) = 0,
\end{equation}
where \(h \in \mathbb{H}\). Then we have the following properties for \(\tilde{X}^h\).

\begin{lemma}\label{skeletonprop}
	Suppose that assumptions \textbf{(A1)}--\textbf{(A4)}. For any \(h \in \mathbb{H}\), equation (\ref{skel}) admits a unique solution \(\tilde{X}^h(t)\) in \(C([0,T];\mathbb{R}^d)\). Moreover, for any \(N > 0\), there exists a constant \(C_{N,T}\) such that
	
	\begin{equation}\label{bound}
	\sup_{h \in S_N} \left\{ \sup_{0 \leq t \leq T} |\tilde{X}^h(t)| \right\} \leq C_{N,T}.
	\end{equation}
\end{lemma}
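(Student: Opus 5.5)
The equation (\ref{skel}) is a linear, non-autonomous ODE in $\tilde{X}^h$. I would therefore treat well-posedness and the uniform bound as consequences of standard linear ODE theory combined with Gronwall's inequality. The one point needing care is that $h\in\mathbb{H}$ is controlled only through $\|h\|_{\mathbb{H}}$ and not directly in $L^2$.

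\emph{Existence and uniqueness.} The deterministic path $\bar{X}^0$ solving (\ref{sim}) is continuous on $[0,T]$. By \textbf{(A4)}, the matrix $A(t):=\nabla b(\bar{X}^0(t))$ is measurable with $\|A(t)\|\le C$; it is in fact continuous, since $\nabla b$ is Lipschitz. By \textbf{(A2)}, $\|\sigma(\bar{X}^0(t))\|\le K$. Hence the forcing $f(t):=\sigma(\bar{X}^0(t))h(t)$ belongs to $L^1([0,T];\mathbb{R}^d)$ as soon as $h\in L^1$. For the drift $x\mapsto A(t)x+f(t)$ is globally Lipschitz in $x$ uniformly in $t$, so Carathéodory theory (or a Picard iteration in $C([0,T];\mathbb{R}^d)$) yields a unique absolutely continuous solution. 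Equivalently, I will write it explicitly as
\[
\tilde{X}^h(t)=\int_0^t\Phi(t,s)\sigma(\bar{X}^0(s))h(s)\,ds,
\]
where $\Phi$ is the fundamental matrix of $\dot{\Phi}=A\Phi$, which satisfies $\|\Phi(t,s)\|\le e^{C(t-s)}$. Uniqueness then follows because the difference of two solutions solves $\dot z=Az$ with $z(0)=0$, and Gronwall gives $z\equiv0$.

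\emph{Uniform bound.} From the integral form,
\[
|\tilde{X}^h(t)|\le C\int_0^t|\tilde{X}^h(s)|\,ds+K\int_0^T|h(s)|\,ds.
\]
Gronwall then gives $\sup_t|\tilde{X}^h(t)|\le K e^{CT}\|h\|_{L^1}$. It therefore suffices to show $\sup_{h\in S_N}\|h\|_{L^1([0,T])}\le C_{N,T}$.

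\emph{Main obstacle.} This last reduction is the key step, since $S_N$ is defined by $\|h\|_{\mathbb{H}}^2\le N$. For $H>\frac12$, this norm is the $|s-t|^{2H-2}$-weighted quadratic form, which a priori controls only a weaker (negative-order) norm. I would proceed as the paper does in Proposition \ref{con}, where $\int_0^T|h^\varepsilon|^2ds$ is treated as bounded by $N$. Concretely, I would identify $h$ with the square-integrable density appearing in Lemma \ref{LDP}, so that $\|h\|_{L^2}\le C\|h\|_{\mathbb{H}}$ via the isometry $K_H^*$. Then Cauchy--Schwarz gives $\|h\|_{L^1}\le T^{1/2}\|h\|_{L^2}\le (TN)^{1/2}$ up to a constant. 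If one insists on the genuine $\mathbb{H}$-norm, the fallback is to use the continuity of the embedding $L^{1/H}([0,T])\hookrightarrow|\mathbb{H}|$ in reverse on the relevant control class. Failing that, I would bound the Young-type integral $\int_0^t\Phi(t,s)\sigma(\bar{X}^0(s))h(s)\,ds$ by duality, pairing $h$ against the smooth kernel $s\mapsto\Phi(t,s)\sigma(\bar{X}^0(s))$. This kernel is Lipschitz in $s$ because $\bar{X}^0$ is $C^1$ and $\sigma$ is Lipschitz, so its $\mathbb{H}$-dual norm is bounded uniformly in $t$. Combining either route with the Gronwall estimate gives (\ref{bound}) with $C_{N,T}=CKe^{CT}\sqrt{TN}$.
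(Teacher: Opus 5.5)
Your linear-ODE framework is sound: variation of constants, $\|\Phi(t,s)\|\le e^{C(t-s)}$, and uniqueness by Gronwall. The paper omits this proof, but in Propositions \ref{con} and \ref{proba} it treats $S_N$ as an $L^2$-ball, so your first route is presumably what it intends.

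\textbf{The gap is the step you rely on to bound $h$.} The isometry $\|h\|_{\mathbb{H}}=\|K_H^*h\|_{L^2}$ controls $K_H^*h$, not $h$. For $H>\frac12$, $K_H^*$ is a weighted fractional integral of order $H-\frac12$, hence smoothing. So no inequality $\|h\|_{L^2}\le C\|h\|_{\mathbb{H}}$ holds. Your second route, reversing $L^{1/H}\hookrightarrow|\mathbb{H}|$, fails for the same reason.

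Here is a concrete counterexample. Take $h_n(s)=n^{H-1/2}\sin(ns)\,e_1$, which is continuous with $h_n(0)=0$. Use the Fourier form $\|h\|_{\mathbb{H}}^2=c_H\int_{\mathbb{R}}|\widehat{h\mathbf{1}_{[0,T]}}(\xi)|^2|\xi|^{1-2H}d\xi$. This gives $\|h_n\|_{\mathbb{H}}^2\lesssim n^{2H-1}\cdot n^{1-2H}=1$, while $\|h_n\|_{L^1(0,T)}\asymp n^{H-1/2}\to\infty$.

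Consequences:
\begin{itemize}
\item With $S_N$ as defined, your reduction ``it suffices to bound $\sup_{S_N}\|h\|_{L^1}$'' targets a false statement.
\item The constant $CKe^{CT}\sqrt{TN}$ is therefore unjustified.
\item The paper's own use of $\int_0^T|h^\varepsilon|^2\,ds\lesssim N$ has the same defect.
\end{itemize}
Your first route would become correct only if the controls were reformulated as derivatives of Cameron--Martin paths, $\int_0^\cdot h=K_Hu$ with $\|u\|_{L^2}^2\le N$. Then $h=\frac{d}{dt}K_Hu$ is a fractional integral of $u$ and is bounded in $L^2$. That, however, changes the statement.

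\textbf{The lemma itself is true, and your third (duality) route is the actual proof; it only needs the right justification.} Write $\tilde X^h(t)=\int_{\mathbb{R}}G_t(s)h(s)\,ds$ with $G_t(s)=\mathbf{1}_{[0,t]}(s)\Phi(t,s)\sigma(\bar X^0(s))$ and $h$ extended by zero. Plancherel and Cauchy--Schwarz with the weights $|\xi|^{1-2H}$ and $|\xi|^{2H-1}$ give
\[
|\tilde X^h(t)|\le C\|h\|_{\mathbb{H}}\Big(\int_{\mathbb{R}}\|\widehat{G_t}(\xi)\|^2|\xi|^{2H-1}\,d\xi\Big)^{1/2}.
\]
Lipschitz continuity of the kernel is not what makes this finite, because $G_t$ has jumps at $s=0$ and $s=t$. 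What you need is this: $G_t$ is bounded and Lipschitz on $[0,t]$, uniformly in $t$. This holds because $\partial_s\Phi(t,s)=-\Phi(t,s)\nabla b(\bar X^0(s))$, $\sigma$ is bounded and Lipschitz, and $|\dot{\bar X}^0|=|b|$ is bounded. One integration by parts then gives $\|\widehat{G_t}(\xi)\|\le C\min(1,|\xi|^{-1})$ uniformly in $t\in[0,T]$. Finally, $\int_{\mathbb{R}}\min(1,|\xi|^{-2})|\xi|^{2H-1}d\xi<\infty$ because $2H-3<-1$, i.e.\ $H<1$.

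This yields $\sup_{h\in S_N}\sup_{t\le T}|\tilde X^h(t)|\le C_T\sqrt N$ directly, with no Gronwall step. The same formula also defines the solution for $h$ in the completion of $\mathbb{H}$, where Carath\'eodory theory no longer applies.
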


The proof is straightforward, we thus omit it. Furthermore, following from Lemma \ref{skeletonprop}, it allows us to define a map \(\Upsilon^0 : C([0,T];\mathbb{R}^d) \to C([0,T];\mathbb{R}^d)\) by

\begin{equation}
\Upsilon^0 \left( \int_0^{\cdot} h(s) ds \right) = \tilde{X}^h(\cdot).
\end{equation}

Now we state the main result in this section.

\begin{thm}\label{mdpresult}
	Suppose that assumptions \textbf{(A1)}--\textbf{(A4)} hold and \(\lim_{\varepsilon \to 0} \frac{\mu(\varepsilon)}{{\varepsilon}^H} = 0\). Then \(\{\eta^{\varepsilon}(t)\}_{\varepsilon > 0}\) satisfies a large deviation principle on \(C([0,T];\mathbb{R}^d)\) with speed \(\lambda^{-2}(\varepsilon)\) and rate function \(\tilde{I}\) given by:
	
	\begin{align*}
            \tilde{I}(g) = \inf_{h \in \mathbb{H},\; g = \Upsilon^0 \left( \int_0^{\cdot} h(s) ds \right)} 
            \left\{ \frac{1}{2} \|h\|_{\mathbb{H}}^2\right\},\quad g \in C([0,T];\mathbb{R}^d),
        \end{align*}
	where \(\inf \emptyset = \infty\) by convention and \(\Upsilon^0 \left( \int_0^{\cdot} h(s) ds \right) = \tilde{X}^h\) satisfies the skeleton equation(\ref{skel}).
\end{thm}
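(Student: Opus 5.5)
The plan is to apply Lemma \ref{mdp} with the maps \(\Upsilon^{\varepsilon}\) and \(\Upsilon^0\) defined above, so two conditions must be checked: (i) continuity of the skeleton map \(h\mapsto\tilde{X}^h\) on \(S_N\) with the weak topology, and (ii) convergence in probability of the controlled process \(\eta^{\varepsilon,h^\varepsilon}\) given by (\ref{eta}) to \(\tilde{X}^{h^\varepsilon}\). Once both hold, the rate function \(\tilde I\) follows directly from Lemma \ref{mdp}.

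\emph{Condition (i).} I would follow the proof of Proposition \ref{con}. Since (\ref{skel}) is linear with bounded coefficients \(\nabla b(\bar X^0)\) and \(\sigma(\bar X^0)\), Lemma \ref{skeletonprop} and Cauchy--Schwarz give uniform boundedness of \(\{\tilde X^{h^\varepsilon}\}\). They also give the equicontinuity estimate \(|\tilde X^{h^\varepsilon}(t)-\tilde X^{h^\varepsilon}(s)|\le C_{N,T}|t-s|^{1/2}\). By Arzel\`a--Ascoli some subsequence converges to a limit \(l\). Weak convergence \(h^\varepsilon\to h\) yields \(\int_0^t\sigma(\bar X^0(s))h^\varepsilon(s)ds\to\int_0^t\sigma(\bar X^0(s))h(s)ds\). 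Hence \(l\) solves (\ref{skel}), and uniqueness gives \(l=\tilde X^h\).

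\emph{Condition (ii).} Set \(Z^\varepsilon=\eta^{\varepsilon,h^\varepsilon}-\tilde X^{h^\varepsilon}\) and subtract (\ref{skel}) from (\ref{eta}). I would bound \(\mathbb{E}\sup_{t\le T}|Z^\varepsilon(t)|^2\) term by term.
\begin{itemize}
\item The initial-velocity term is \(O(\mu/(\varepsilon^H\lambda(\varepsilon)))\), which tends to \(0\) because \(\mu/\varepsilon^H\to0\) and \(\lambda\to\infty\).
\item For the drift, Taylor's formula with (A4) gives \(\frac{1}{\varepsilon^H\lambda}[b(\bar X^0+\varepsilon^H\lambda\eta)-b(\bar X^0)]=\nabla b(\bar X^0)\eta+R\) with \(|R|\le C\varepsilon^H\lambda|\eta|^2\).
\item The \(h^\varepsilon\)-term differs from \(\sigma(\bar X^0)h^\varepsilon\) by at most \(L\varepsilon^H\lambda|\eta||h^\varepsilon|\).
\item The stochastic integrals carry the factor \(\lambda^{-1}\). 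By (A3) and the maximal inequality (\ref{maximal_inequality}), exactly as in (\ref{eq:stochastic_integral_estimate})--(\ref{eq:exponential_integral_estimate}), they contribute \(O(\lambda^{-2})\to0\).
\item The \(e^{-t/\mu}\int_0^te^{s/\mu}\sigma(\cdot)h^\varepsilon\,ds\) term is \(O(\sqrt{\mu N})\) by Cauchy--Schwarz, as in Proposition \ref{proba}.
\end{itemize}
Gronwall's inequality then closes the estimate, once an a priori bound \(\sup_\varepsilon\mathbb{E}\sup_t|\eta^{\varepsilon,h^\varepsilon}(t)|^2<\infty\) is available to control the quadratic remainders. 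Chebyshev's inequality finishes the argument.

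\emph{Main obstacle.} The hard term is the boundary-layer term \(\frac{1}{\varepsilon^H\lambda}e^{-t/\mu}\int_0^te^{s/\mu}b(\cdot)ds\). Bounded \(b\) only gives \(O(\mu/(\varepsilon^H\lambda))\) uniformly in \(t\), which vanishes by the hypothesis \(\mu/\varepsilon^H\to0\); this is where that hypothesis enters. The genuinely delicate part is the a priori bound on \(\eta^{\varepsilon,h^\varepsilon}\), because the remainder is quadratic. I would first localize with the stopping time \(\tau_M=\inf\{t:|\eta^{\varepsilon,h^\varepsilon}(t)|>M\}\). On \([0,\tau_M]\) the remainder is at most \(CM\varepsilon^H\lambda|\eta|\), which gives the linear Gronwall bound. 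I would then show \(\mathbb P(\tau_M<T)\) is small uniformly for large \(M\), using the Lipschitz bound \(|b(\bar X^0+\varepsilon^H\lambda\eta)-b(\bar X^0)|\le L\varepsilon^H\lambda|\eta|\) for a global second-moment bound.
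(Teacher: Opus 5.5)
Your proposal is correct and has the same overall architecture as the paper. You use Lemma \ref{mdp}, verify condition (i) by Arzel\`a--Ascoli exactly as in Proposition \ref{mdpcon}, and verify condition (ii) through a global second-moment bound from the Lipschitz estimate and Gronwall (this is Lemma \ref{mdpest}), followed by term-by-term $L^2$ bounds, Gronwall and Chebyshev; $\mu/\varepsilon^H\to0$ is used, as in the paper, on the $y_0$ and boundary-layer terms.

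The one genuine difference is how you linearize the drift. You expand exactly around $\bar X^0$, which leaves a remainder $C\varepsilon^H\lambda(\varepsilon)|\eta^{\varepsilon,h^\varepsilon}|^2$. That remainder is quadratic and is what pushes you to localize at $\tau_M$.

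The paper (Proposition \ref{mdpproba}) avoids this. It writes the drift difference in mean-value form and adds and subtracts $\nabla b(\bar X^0+r\varepsilon^H\lambda(\varepsilon)\eta^{\varepsilon,h^\varepsilon})\tilde X^{h^\varepsilon}$, which gives $|I_1|\le C\int_0^t|\eta^{\varepsilon,h^\varepsilon}-\tilde X^{h^\varepsilon}|\,ds+C\varepsilon^H\lambda(\varepsilon)\int_0^t|\eta^{\varepsilon,h^\varepsilon}||\tilde X^{h^\varepsilon}|\,ds$. Since $\sup_t|\tilde X^{h^\varepsilon}(t)|\le C_{N,T}$ deterministically by (\ref{bound}), the error is linear in $\eta^{\varepsilon,h^\varepsilon}$. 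The bound (\ref{etabound}) then closes the mean-square Gronwall estimate by itself, with forcing of order $\varepsilon^{2H}\lambda^2(\varepsilon)$. So the obstacle you single out never arises, no stopping time is needed, and you get an explicit mean-square rate.

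Your localization is also valid: $\mathbb{P}(\tau_M<T)\le C_{N,K,T}M^{-2}$ follows from the same global bound. One caution: on $[0,\tau_M\wedge T]$ you should bound the two divergence integrals by their global suprema over $[0,T]$ rather than stopping them, because (\ref{maximal_inequality}) gives no control of Skorokhod integrals stopped at a random time. The advantage of your route is that it does not rely on a deterministic bound for the skeleton.

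A simpler fix for your quadratic remainder also exists: a pathwise Gronwall combined with $\mathbb{E}\,\varepsilon^H\lambda(\varepsilon)\int_0^T|\eta^{\varepsilon,h^\varepsilon}(s)|^2ds\le C\varepsilon^H\lambda(\varepsilon)\to0$ gives the required convergence in probability without any localization.
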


To complete the proof of Theorem \ref{mdpresult}, it is sufficient to verify the conditions (i) and (ii) in Lemma \ref{mdp} for the above mentioned maps \(\Upsilon^{\varepsilon}\) and \(\Upsilon^0\). The verification of (i) and (ii) will be given in Propositions \ref{mdpcon} and \ref{mdpproba} respectively.

We now proceed to the proof of condition (i).

\begin{prop}\label{mdpcon}
	Under the assumptions \textbf{(A1)}--\textbf{(A4)}, let \(\{h^{\varepsilon}\}_{\varepsilon>0}\subset S_{N}\) for some \(N<\infty\) such that \(h^{\varepsilon}\) converges to element \(h\) in \(S_{N}\) as \(\varepsilon\to 0\), then \(\Upsilon^{0}\left(\int_0^{\cdot}{h}^{\varepsilon}(s)ds\right)\) converges to \(\Upsilon^{0}\left(\int_0^{\cdot}h(s)ds\right)\) in \(C([0,T];\mathbb{R}^{d})\).
\end{prop}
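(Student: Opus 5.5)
We follow the compactness–identification scheme of Proposition \ref{con}, now applied to the linear skeleton equation (\ref{skel}). Write $\tilde{X}^{h^\varepsilon}=\Upsilon^0(\int_0^\cdot h^\varepsilon(s)ds)$ and $\tilde{X}^h=\Upsilon^0(\int_0^\cdot h(s)ds)$. We first show that $\{\tilde{X}^{h^\varepsilon}\}_{\varepsilon>0}$ is pre-compact in $C([0,T];\mathbb{R}^d)$. Lemma \ref{skeletonprop} already gives the uniform bound $\sup_{\varepsilon}\sup_{t\le T}|\tilde{X}^{h^\varepsilon}(t)|\le C_{N,T}$, since $h^\varepsilon\in S_N$. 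For equicontinuity we take $s<t$ and estimate
\begin{align*}
|\tilde{X}^{h^\varepsilon}(t)-\tilde{X}^{h^\varepsilon}(s)|&\le \int_s^t\|\nabla b(\bar{X}^0(r))\|\,|\tilde{X}^{h^\varepsilon}(r)|dr+\int_s^t\|\sigma(\bar{X}^0(r))\|\,|h^\varepsilon(r)|dr\\
&\le C\,C_{N,T}|t-s|+K\Big(\int_0^T|h^\varepsilon(r)|^2dr\Big)^{1/2}|t-s|^{1/2}.
\end{align*}
Here we use \textbf{(A4)} for $\nabla b$ and \textbf{(A2)}/\textbf{(A3)} for $\sigma$. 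For the $L^2$ bound on $h^\varepsilon$ we use, as in Proposition \ref{con}, that $S_N$ controls $\int_0^T|h^\varepsilon|^2$ by a constant depending on $N,T,H$. Arzelà–Ascoli then gives pre-compactness.

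Next we identify the limit. Let $l$ be the limit of a subsequence, still indexed by $\varepsilon$, so that $\sup_t|\tilde{X}^{h^\varepsilon}(t)-l_t|\to0$. The drift term converges because $\nabla b(\bar{X}^0(\cdot))$ is bounded:
\[
\int_0^t\nabla b(\bar{X}^0(s))\tilde{X}^{h^\varepsilon}(s)ds\to\int_0^t\nabla b(\bar{X}^0(s))l_sds .
\]
For the control term, the coefficient $\sigma(\bar{X}^0(\cdot))$ is deterministic and does not depend on $\varepsilon$, which makes this step easier than in Proposition \ref{con}. For each fixed $t$, the map $g\mapsto\int_0^t\sigma(\bar{X}^0(s))g(s)ds$ is a continuous linear functional on $\mathbb{H}$. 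It is bounded by $C\|g\|_{L^2}$, and $L^2$ is controlled by the $\mathbb{H}$-norm on $S_N$. The weak convergence $h^\varepsilon\rightharpoonup h$ in $S_N$ therefore gives $\int_0^t\sigma(\bar{X}^0(s))h^\varepsilon(s)ds\to\int_0^t\sigma(\bar{X}^0(s))h(s)ds$. Passing to the limit in the integral form of (\ref{skel}) shows that $l$ solves (\ref{skel}) with control $h$.

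Uniqueness for this linear ODE with bounded coefficient (Gronwall's inequality) forces $l=\tilde{X}^h$. Every subsequence thus has a further subsequence converging to $\tilde{X}^h$, so the whole family converges in $C([0,T];\mathbb{R}^d)$.

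The main obstacle is justifying that weak convergence in $S_N$ yields convergence of $\int_0^t\sigma(\bar{X}^0)h^\varepsilon ds$, since the topology on $S_N$ is that of $\mathbb{H}$ rather than $L^2$. I would handle this exactly as Proposition \ref{con} implicitly does, by regarding the functional as bounded on $\mathbb{H}$ through the isometry with $L^2$ via $K_H^*$. Alternatively, linearity gives a cleaner route: the difference $\tilde{X}^{h^\varepsilon}-\tilde{X}^h$ solves a linear equation with forcing $\int_0^t\sigma(\bar{X}^0)(h^\varepsilon-h)ds$. This forcing converges pointwise and is equicontinuous, hence converges uniformly. Gronwall's inequality then gives uniform convergence directly, without needing the subsequence argument.
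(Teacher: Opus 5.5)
Your argument follows the paper's: pre-compactness from the uniform bound of Lemma \ref{skeletonprop} plus a $|t-s|^{1/2}$ equicontinuity estimate, identification of the limit, and uniqueness for the linear skeleton equation. Your direct Gronwall estimate on $\tilde{X}^{h^\varepsilon}-\tilde{X}^h$ is a legitimate streamlining that removes the subsequence step.

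The gap is the one ingredient everything rests on: that $h\in S_N$ bounds $\int_0^T|h(r)|^2dr$ by a constant depending on $N,T,H$. With $\|\cdot\|_{\mathbb{H}}$ as defined, this is false for $H>\frac12$. The inequality runs the other way, $\|h\|_{\mathbb{H}}\le C_H\|h\|_{L^{1/H}}\le C_{H,T}\|h\|_{L^2}$. For instance, $h_n(t)=n^{H-\frac12}\sin(nt)$ (in one coordinate) has $\|h_n\|_{\mathbb{H}}$ bounded while $\|h_n\|_{L^2}\to\infty$; rescaling $s=u/n$ in the double integral shows $\|h_n\|_{\mathbb{H}}^2=O(T)$. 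So two steps fail as written: your Cauchy--Schwarz equicontinuity bound, and your justification that $g\mapsto\int_0^t\sigma(\bar{X}^0(s))g(s)ds$ is continuous on $\mathbb{H}$ (``bounded by $C\|g\|_{L^2}$, and $L^2$ is controlled by the $\mathbb{H}$-norm''). The paper's sketch, and Proposition \ref{con} before it, make exactly the same tacit assumption. So you have inherited its weak point rather than introduced a new one, but deferring to what Proposition \ref{con} ``implicitly does'' does not close the gap.

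The repair is short, and there are two options.
\begin{itemize}
\item Read $S_N$ as a ball in $L^2([0,T];\mathbb{R}^d)$ with its weak topology, which is what the estimates of Sections \ref{sec large} and \ref{sec mod} actually use. Then your proof is complete as it stands.
\item Keep the $\mathbb{H}$-ball and replace Cauchy--Schwarz by duality. Extending $h$ by zero, $\|h\|_{\mathbb{H}}^2$ is a constant multiple of $\int_{\mathbb{R}}|\hat h(\xi)|^2|\xi|^{1-2H}d\xi$. Hence $|\int_0^T g(r)h(r)dr|\le C_H\|h\|_{\mathbb{H}}[g]$, where $[g]^2=\int_{\mathbb{R}}\int_{\mathbb{R}}|g(x)-g(y)|^2|x-y|^{-2H}dxdy$ is the Gagliardo seminorm of order $H-\frac12$, applied componentwise.
\end{itemize}

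In the second option, take $g=\sigma(\bar{X}^0(\cdot))1_{[s,t]}$. It is bounded and Lipschitz on $[s,t]$ because $\dot{\bar{X}}^0=b(\bar X^0)$ is bounded, so $[g]\le C|t-s|^{1-H}$. Therefore $|\int_s^t\sigma(\bar{X}^0(r))h(r)dr|\le C_{H,T}\sqrt{N}|t-s|^{1-H}$ uniformly over $h\in S_N$. This single bound gives three things:
\begin{itemize}
\item equicontinuity, with exponent $1-H$ instead of $\frac12$;
\item the uniform bound of Lemma \ref{skeletonprop}, via Gronwall;
\item with $s=0$, continuity of the functional on $\mathbb{H}$.
\end{itemize}
Weak convergence then yields $F_\varepsilon(t):=\int_0^t\sigma(\bar{X}^0(r))(h^\varepsilon-h)(r)dr\to0$ for each $t$, hence uniformly by equicontinuity. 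Your Gronwall argument then finishes the proof.
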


\begin{proof}
	Let \(\tilde{X}^h=\Upsilon^{0}\left(\int_{0}^{\cdot}h(s)ds\right)\) and \(\tilde{X}^{h^{\varepsilon}}=\Upsilon^{0}\left(\int_{0}^{\cdot}{h}^{\varepsilon}(s)ds\right)\) be the corresponding solutions to the skeleton equation(\ref{skel}). We need to prove the following result:
	\begin{align*}
            \lim_{\varepsilon\to 0}\sup_{t\in[0,T]}|{\tilde{X}^{h^\varepsilon}(t)}-{\tilde{X}^h}(t)| = 0.
        \end{align*}
	
	The proof is similar to that of Proposition \ref{con} and we just give a sketch here. We first show that \(\{\tilde{X}^{h^{\varepsilon}}\}_{\varepsilon>0}\) is pre-compact in \(C([0,T];\mathbb{R}^{d})\). (\ref{bound}) implies that \(\{\tilde{X}^{h^{\varepsilon}}\}_{\varepsilon>0}\) is uniformly bounded, i.e.,
	\begin{equation}\label{boundedness}
	\sup_{\varepsilon}\sup_{t\in[0,T]}|\tilde{X}^{h^\varepsilon}(t)|\leq C_{N,T}.
	\end{equation}
	
	For any \(s,t\in[0,T]\) with \(s<t\), by assumptions \textbf{(A1)}--\textbf{(A4)} and (\ref{boundedness}),
	\begin{align*}
            |\tilde{X}^{h^\varepsilon}(t)-\tilde{X}^{h^\varepsilon}(s)| 
            &\leq C_{N,T}\int_{s}^{t}|\nabla b(\bar{X}^{0}(r))|dr 
            + \int_{s}^{t}|\sigma(\bar{X}^{0}(r)){h}^{\varepsilon}(r)|dr \\
            &\leq C_{N,T}|t-s|^{\frac{1}{2}}.
        \end{align*}
	
	Therefore, \(\{\tilde{X}^{h^{\varepsilon}}\}_{\varepsilon>0}\) is pre-compact in \(C([0,T];\mathbb{R}^{d})\).
	
	Let \(\tilde{l}\) be any limit of some subsequence of \(\{\tilde{X}^{h^{\varepsilon}}\}_{\varepsilon>0}\) in \(C([0,T],\mathbb{R}^{d})\). Using similar arguments as in the proof of Proposition \ref{con}, we can show \(\tilde{l}=\tilde{X}^h\) which completes the proof.
\end{proof}

To verify condition (ii) in Lemma \ref{mdp}, we also need the following estimate.

\begin{lemma}\label{mdpest}
	Suppose that assumptions \textbf{(A1)}--\textbf{(A4)} hold and \(\lim_{\varepsilon\to 0}\frac{\mu}{{\varepsilon}^H}=0\). Let \({\eta^{\varepsilon,h^{\varepsilon}}}(t)\) be the solution of the controlled equation, then there exists a constant \(C_{N,K,T}>0\) such that
	
	\begin{eqnarray}\label{etabound}
			\sup_{\varepsilon}\mathbb{E}\sup_{0\leq t\leq T}|{\eta^{\varepsilon,h^{\varepsilon}}}(t)|^{2}\leq C_{N,K,T}.
	\end{eqnarray}
	
\end{lemma}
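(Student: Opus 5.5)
We work directly from the integral representation (\ref{eta}) of \(\eta^{\varepsilon,h^\varepsilon}\), square both sides, take \(\sup_{0\le t\le T}\) and then expectation, and aim at a Gronwall-type inequality
\[
\mathbb{E}\sup_{0\le s\le t}|\eta^{\varepsilon,h^\varepsilon}(s)|^2\le C\int_0^t\mathbb{E}\sup_{0\le r\le s}|\eta^{\varepsilon,h^\varepsilon}(r)|^2ds+C_{N,K,T}
\]
with constants independent of \(\varepsilon\). We treat the seven terms of (\ref{eta}) one by one, in the same spirit as the proof of Proposition \ref{proba}.

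\emph{Deterministic terms.} The first term is bounded by \(\frac{\mu}{\varepsilon^H\lambda(\varepsilon)}|y_0|\), which is bounded because \(\mu/\varepsilon^H\to0\) and \(\lambda(\varepsilon)\to\infty\). For the second term we use the Lipschitz bound from \textbf{(A1)} (or \(\|\nabla b\|\le C\) from \textbf{(A4)}): \(|b(\bar X^0+\varepsilon^H\lambda\eta)-b(\bar X^0)|\le L\varepsilon^H\lambda|\eta|\). The prefactor cancels, giving \(C\int_0^t|\eta^{\varepsilon,h^\varepsilon}(s)|ds\), whose square is dominated by the Gronwall integral. For the fourth term, \(\sigma\) is bounded by \textbf{(A2)}, and Cauchy–Schwarz gives \(\int_0^T|h^\varepsilon|ds\le C\|h^\varepsilon\|_{L^2}\). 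We control \(\|h^\varepsilon\|_{L^2}\) through \(h^\varepsilon\in\mathscr A_N\), using the same identification of \(\mathbb H\)-norm with \(L^2\) control as in Proposition \ref{proba}, so this term is bounded by \(C_N\). The fifth term is handled identically, since \(e^{(s-t)/\mu}\le1\).

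\emph{Boundary-layer term, expected main obstacle.} The third term, \(\frac{1}{\varepsilon^H\lambda}e^{-t/\mu}\int_0^te^{s/\mu}b(\cdots)ds\), is the delicate one. A crude use of boundedness of \(b\) only gives \(\frac{\mu}{\varepsilon^H\lambda(\varepsilon)}\|b\|_\infty\), and this already suffices, because \(\frac{\mu}{\varepsilon^H}\to0\) and \(\lambda\to\infty\). This is exactly where the hypothesis \(\mu/\varepsilon^H\to0\) is used. If one wanted to avoid the boundedness of \(b\), we would split \(b(\bar X^0+\varepsilon^H\lambda\eta)=b(\bar X^0)+[\cdots]\). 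The first part gives the same \(\mu/(\varepsilon^H\lambda)\) bound via \(\sup|\bar X^0|<\infty\). The difference part gives \(C\int_0^te^{(s-t)/\mu}|\eta(s)|ds\), which again feeds into Gronwall.

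\emph{Stochastic terms.} For the last two terms we apply the maximal inequality (\ref{maximal_inequality}) with \(p=2\), together with \textbf{(A3)}, exactly as in (\ref{eq:stochastic_integral_estimate})–(\ref{eq:exponential_integral_estimate}). The Malliavin derivative of \(\sigma(\bar X^0+\varepsilon^H\lambda\eta)\) is controlled by \(K\). This yields bounds of order \(\lambda(\varepsilon)^{-2}C_{K,T}\) and \(\lambda(\varepsilon)^{-2}C_{K,T}\mu\) respectively, both uniformly bounded.

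Collecting all terms gives the displayed inequality. Gronwall's lemma then yields \(\mathbb{E}\sup_{t\le T}|\eta^{\varepsilon,h^\varepsilon}(t)|^2\le C_{N,K,T}e^{CT}\), uniformly in \(\varepsilon\) small. For \(\varepsilon\) bounded away from \(0\), the same bound holds trivially by the same estimates with adjusted constants, which gives (\ref{etabound}).
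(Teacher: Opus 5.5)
Your proposal is correct and matches the paper's proof essentially step for step. You bound each term of (\ref{eta}) using \textbf{(A1)}/\textbf{(A2)} and Cauchy--Schwarz for the drift and control terms, and the maximal inequality (\ref{maximal_inequality}) with \textbf{(A3)} for the two stochastic integrals (orders $C_{K,T}\lambda^{-2}(\varepsilon)$ and $C_{K,T}\mu\lambda^{-2}(\varepsilon)$); the initial-velocity and boundary-layer terms are controlled by $\mu^2/(\varepsilon^{2H}\lambda^2(\varepsilon))$, and you close with Gronwall, exactly as the paper does, including its reading of $h^\varepsilon\in\mathscr A_N$ as an $L^2$ bound $\int_0^T|h^\varepsilon(s)|^2ds\le C_N$ and your remark that splitting off $b(\bar X^0)$ avoids using boundedness of $b$ is only a harmless refinement.
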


\begin{proof}
	Note that \({\eta^{\varepsilon,h^{\varepsilon}}}(t)\) has the representation (\ref{eta})
	\begin{align*}
            \eta^{\varepsilon,h^\varepsilon}(t) 
            &= \frac{\mu}{\varepsilon^H \lambda(\varepsilon)} y_0 (1 - e^{-\frac{t}{\mu}})  + \frac{1}{\varepsilon^H \lambda(\varepsilon)} \int_0^t \left[ b(\bar{X}^{0}(s) + \varepsilon^H \lambda(\varepsilon)\eta^{\varepsilon,h^\varepsilon}(s)) - b(\bar{X}^{0}(s)) \right] ds \\
            &\quad - \frac{1}{\varepsilon^H \lambda(\varepsilon)} e^{-\frac{t}{\mu}} \int_0^t e^{\frac{s}{\mu}} b(\bar{X}^{0}(s) + \varepsilon^H \lambda(\varepsilon)\eta^{\varepsilon,h^\varepsilon}(s)) ds \\
            &\quad + \int_0^t \sigma(\bar{X}^{0}(s) + \varepsilon^H \lambda(\varepsilon)\eta^{\varepsilon,h^\varepsilon}(s)) {h}^{\varepsilon}(s) ds \\
            &\quad - e^{-\frac{t}{\mu}} \int_0^t e^{\frac{s}{\mu}} \sigma(\bar{X}^{0}(s) + \varepsilon^H \lambda(\varepsilon)\eta^{\varepsilon,h^\varepsilon}(s)) {h}^{\varepsilon}(s) ds \\
            &\quad + \frac{1}{\lambda(\varepsilon)} \int_0^t \sigma(\bar{X}^{0}(s) + \varepsilon^H \lambda(\varepsilon) \eta^{\varepsilon,h^\varepsilon}(s)) dB^H(s) \\
            &\quad - \frac{1}{\lambda(\varepsilon)} e^{-\frac{t}{\mu}} \int_0^t e^{\frac{s}{\mu}} \sigma(\bar{X}^{0}(s) + \varepsilon^H \lambda(\varepsilon) \eta^{\varepsilon,h^\varepsilon}(s)) dB^H(s).
        \end{align*}
	By assumption \textbf{(A1)}, we have
	\begin{align} \label{eq:expectation_estimate}
            &\left|\frac{1}{{\varepsilon}^H\lambda(\varepsilon)}\mathbb{E}\sup_{0\leq t\leq T}\int_{0}^{t}\left[b(\bar{X}^{0}(s) + \varepsilon^H \lambda(\varepsilon)\eta^{\varepsilon,h^\varepsilon}(s))-b(\bar{X}^{0}(s))\right]ds\right|^{2} \nonumber\\
            &\leq C\mathbb{E}\int_{0}^{T}|{\eta^{\varepsilon,h^{\varepsilon}}}(s)|^{2}ds 
            \leq C\int_{0}^{T}\mathbb{E}\sup_{0\leq r\leq s}|{\eta^{\varepsilon,h^{\varepsilon}}}(r)|^{2}ds.
        \end{align}
	By assumption \textbf{(A3)} and the maximal inequality (\ref{maximal_inequality}), we have
	
	\begin{align} \label{eq:fractional_integral_estimate}
                &\mathbb{E}\sup_{0\leq t\leq T}\left|\frac{1}{\lambda(\varepsilon)} \int_0^t \sigma(\bar{X}^{0}(s) + \varepsilon^H \lambda(\varepsilon) \eta^{\varepsilon,h^\varepsilon}(s)) dB^H(s)\right|^{2} \nonumber\\
                &\leq C \frac{1}{\lambda^2(\varepsilon)}\left[ \int_0^T \left| \mathbb{E}\sigma(\bar{X}^{0}(s) + \varepsilon^H \lambda(\varepsilon) \eta^{\varepsilon,h^\varepsilon}(s)) \right|^{2} ds \right. \nonumber\\
                &\quad \left. + \mathbb{E}\int_0^T \left( \int_0^T |D_{s}  \sigma(\bar{X}^{0}(r) + \varepsilon^H \lambda(\varepsilon) \eta^{\varepsilon,h^\varepsilon}(r))|^{\frac{1}{H}}ds \right)^{2H} dr \right] \nonumber\\
                &\leq C \frac{1}{\lambda^2(\varepsilon)} \left(K^2 T + \int_0^T K^2 T^{2H} dr \right) 
                = C_{K,T} \frac{1}{\lambda^2(\varepsilon)}.
        \end{align}
	Similarly,
	\begin{align} \label{eq:exponential_fractional_estimate}
            &\mathbb{E}\sup_{0\leq t\leq T}\left|\frac{1}{\lambda(\varepsilon)} e^{-\frac{t}{\mu}} \int_0^t e^{\frac{s}{\mu}} \sigma(\bar{X}^{0}(s) + \varepsilon^H \lambda(\varepsilon) \eta^{\varepsilon,h^\varepsilon}(s)) dB^H(s)\right|^{2} \nonumber\\
            &\leq C\frac{1}{\lambda^2(\varepsilon)} \sup_{0 \leq t \leq T} \left[ \int_0^t \left| \mathbb{E}(e^{\frac{s-t}{\mu}} \sigma(\bar{X}^{0}(s) + \varepsilon^H \lambda(\varepsilon) \eta^{\varepsilon,h^\varepsilon}(s)) \right|^{2} ds \right. \nonumber\\
            &\quad \left. +\mathbb{E} \int_0^t \left( \int_0^t |D_{s} (e^{\frac{r-t}{\mu}} \sigma(\bar{X}^{0}(r) + \varepsilon^H \lambda(\varepsilon) \eta^{\varepsilon,h^\varepsilon}(r))|^{\frac{1}{H}}ds \right)^{2H} dr \right]  \nonumber\\
            &\leq C\frac{1}{\lambda^2(\varepsilon)} \sup_{0 \leq t \leq T} \left[ \frac{\mu}{2} (1-e^{-\frac{2t}{\mu}}) K^2 + \int_0^t K^2 e^{\frac{2(r-t)}{\mu}} t^{2H} dr \right]  \nonumber\\
            &\leq C\frac{ K^2 }{\lambda^2(\varepsilon)}\frac{\mu}{2} \left(1 +  T^{2H}\right) = C_{K,T} \frac{\mu}{\lambda^2(\varepsilon)}.
        \end{align}
	Making use of assumption \textbf{(A1)}, \textbf{(A2)} and \eqref{eq:fractional_integral_estimate}, \eqref{eq:exponential_fractional_estimate}, we easily deduce that
	
	\begin{align*}
            \mathbb{E}\sup_{0\leq t\leq T}|{\eta^{\varepsilon,h^{\varepsilon}}}(t)|^{2} 
            &\leq C\int_{0}^{T}\mathbb{E}\sup_{0\leq r\leq s}|{\eta^{\varepsilon,h^{\varepsilon}}}(r)|^{2}ds \\
            &\quad + \left(\frac{\mu^{2}}{\varepsilon^{2H}\lambda^{2}(\varepsilon)}+1+\mu+\frac{1}{\lambda^{2}(\varepsilon)}+\frac{\mu}{\lambda^{2}(\varepsilon)}\right)C_{N,K,T}.
        \end{align*}
	
	Then the Gronwall's inequality yields
	
	\begin{align*}
            \mathbb{E}\sup_{0\leq t\leq T}|{\eta^{\varepsilon,h^{\varepsilon}}}(t)|^{2} 
            \leq \left(\frac{\mu^{2}}{\varepsilon^{2H}\lambda^{2}(\varepsilon)}+1+\mu+\frac{1}{\lambda^{2}(\varepsilon)}+\frac{\mu}{\lambda^{2}(\varepsilon)}\right)C_{N,K,T}e^{CT}.
        \end{align*}
	
	Since \({\varepsilon}^H\lambda(\varepsilon), \frac{1}{\lambda(\varepsilon)}, \frac{\mu}{{\varepsilon}^H}, \mu\to 0\) as \(\varepsilon\to 0\), we thus complete the proof.
\end{proof}

The verification of (ii) in Lemma \ref{mdp} is given in the next proposition.

\begin{prop}\label{mdpproba}
	Suppose that assumptions \textbf{(A1)}--\textbf{(A4)} hold and \(\lim_{\varepsilon\to 0}\frac{\mu}{{\varepsilon}^H}=0\). Let \(\{h^{\varepsilon}\}_{\varepsilon>0}\subset\mathscr{A}_{N}\) for some \(N<\infty\). Then for any \(\delta>0\), we have
	\begin{align*}
            \lim_{\varepsilon\to 0}\mathbb{P}\left\{d\left(\Upsilon^{\varepsilon}\left(\frac{1}{\lambda(\varepsilon)}B^H(\cdot)+\int_{0}^{\cdot}h^\varepsilon(s)ds\right),\Upsilon^{0}\left(\int_{0}^{\cdot}h^\varepsilon(s)ds\right)\right)>\delta\right\}= 0.
        \end{align*}
\end{prop}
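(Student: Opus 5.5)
We prove Proposition \ref{mdpproba} by showing that
\[
Z^{\varepsilon}:=\eta^{\varepsilon,h^{\varepsilon}}-\tilde{X}^{h^{\varepsilon}}
\]
satisfies $\mathbb{E}\sup_{0\le t\le T}|Z^{\varepsilon}(t)|^{2}\to0$. Chebyshev's inequality then gives the claim, exactly as at the end of Proposition \ref{proba}. Subtracting the skeleton equation (\ref{skel}), written with $h^{\varepsilon}$, from the representation (\ref{eta}) splits $Z^{\varepsilon}(t)$ into the following pieces.

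\emph{Boundary terms.} The first is $\frac{\mu}{\varepsilon^H\lambda(\varepsilon)}y_0(1-e^{-t/\mu})$. Since $\mu/\varepsilon^H\to0$ and $\lambda\to\infty$, it is $o(1)$ uniformly in $t$. The second is the $h^{\varepsilon}$-boundary term $e^{-t/\mu}\int_0^t e^{s/\mu}\sigma(\cdot)h^{\varepsilon}ds$. By the boundedness in \textbf{(A2)} and Cauchy--Schwarz it is bounded by $C\sqrt{\mu N}$.

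\emph{Drift difference.} This is the key term:
\[
\frac{1}{\varepsilon^H\lambda}\int_0^t\big[b(\bar X^0+\varepsilon^H\lambda\eta^{\varepsilon,h^\varepsilon})-b(\bar X^0)\big]ds-\int_0^t\nabla b(\bar X^0)\tilde X^{h^\varepsilon}ds .
\]
Write $b(\bar X^0+\varepsilon^H\lambda\eta)-b(\bar X^0)=\varepsilon^H\lambda\int_0^1\nabla b(\bar X^0+\theta\varepsilon^H\lambda\eta)\,d\theta\,\eta$. By the Lipschitz bound on $\nabla b$ in \textbf{(A4)}, this equals $\varepsilon^H\lambda\nabla b(\bar X^0)\eta+R$ with $|R|\le C(\varepsilon^H\lambda)^2|\eta|^2$. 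The drift difference therefore becomes
\[
\int_0^t\nabla b(\bar X^0)Z^{\varepsilon}ds+O\Big(\varepsilon^H\lambda\int_0^t|\eta^{\varepsilon,h^\varepsilon}|^2ds\Big).
\]

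\emph{Control difference.} This is $\int_0^t[\sigma(\bar X^0+\varepsilon^H\lambda\eta)-\sigma(\bar X^0)]h^{\varepsilon}ds$. By \textbf{(A1)} and Cauchy--Schwarz it is at most $C\varepsilon^H\lambda\sqrt N\,\big(\int_0^T|\eta|^2\big)^{1/2}$.

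\emph{Stochastic integrals.} These were already estimated in (\ref{eq:fractional_integral_estimate}) and (\ref{eq:exponential_fractional_estimate}) via \textbf{(A3)} and the maximal inequality (\ref{maximal_inequality}). Their second moments are $O(\lambda^{-2})$ and $O(\mu\lambda^{-2})$.

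The main obstacle is the remaining $b$-boundary term
\[
\frac{1}{\varepsilon^H\lambda}e^{-t/\mu}\int_0^te^{s/\mu}b(\bar X^0(s)+\varepsilon^H\lambda\eta(s))\,ds .
\]
Used naively, the boundedness of $b$ from \textbf{(A2)} gives only $\mu/(\varepsilon^H\lambda)$, which does tend to $0$ under the hypothesis $\mu/\varepsilon^H\to0$. So the plan is exactly that: bound this term by $C\mu/(\varepsilon^H\lambda(\varepsilon))\to0$. The same term was the reason the hypothesis appeared in Lemma \ref{mdpest}.

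\emph{The quadratic remainder.} The second-order term $\varepsilon^H\lambda\int|\eta|^2$ would call for a fourth moment if handled by squaring. To avoid this, we work with the first moment $\mathbb{E}\sup_t|Z^\varepsilon|$ instead of the second. Then
\[
\mathbb{E}\Big[\varepsilon^H\lambda\int_0^T|\eta|^2\Big]\le \varepsilon^H\lambda\,T\,C_{N,K,T}\to0
\]
by Lemma \ref{mdpest}. Similarly, the control difference is controlled by $C\varepsilon^H\lambda\sqrt{N}\,\big(\mathbb{E}\int|\eta|^2\big)^{1/2}\to0$, and the stochastic terms by Jensen's inequality. The first moment also suffices for Chebyshev's inequality.

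Collecting these bounds gives
\[
\mathbb{E}\sup_{s\le t}|Z^\varepsilon(s)|\le C\int_0^t\mathbb{E}\sup_{r\le s}|Z^\varepsilon(r)|\,ds+\rho(\varepsilon),
\]
where $\rho(\varepsilon)\to0$ and we used the bound on $\nabla b$ from \textbf{(A4)}. Gronwall's inequality then yields $\mathbb{E}\sup_{t\le T}|Z^\varepsilon(t)|\to0$, and Chebyshev's inequality finishes the proof. If the paper's convention prefers second moments, one can instead truncate on the event $\{\sup_t|\eta^{\varepsilon,h^\varepsilon}|\le M\}$. That event has probability at least $1-C/M^2$ by Lemma \ref{mdpest}, and on it the remainder is at most $C\varepsilon^H\lambda M|\eta|$, which is linear in $\eta$.
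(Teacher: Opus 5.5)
Your proof is correct and has the same architecture as the paper's. Both split $\eta^{\varepsilon,h^\varepsilon}-\tilde X^{h^\varepsilon}$ into a linearized drift part, the control difference, and the boundary and stochastic terms. Both bound the last group exactly as in Lemma \ref{mdpest}, including the $b$-boundary term, which the paper also controls by $\mu/(\varepsilon^H\lambda(\varepsilon))$. Both finish with Gronwall and Chebyshev.

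The one genuine difference is how you treat the linearization error of $b$. You expand around $\bar X^0$, which leaves a remainder $C\varepsilon^H\lambda(\varepsilon)|\eta^{\varepsilon,h^\varepsilon}|^2$ that is quadratic in $\eta^{\varepsilon,h^\varepsilon}$, and so you drop to first moments. The paper avoids the quadratic term by keeping the perturbed gradient in front of the difference. With $\eta=\eta^{\varepsilon,h^\varepsilon}$ and $\theta\in[0,1]$ it writes
\[
\nabla b(\bar X^0+\theta\varepsilon^H\lambda\eta)\,\eta-\nabla b(\bar X^0)\,\tilde X^{h^\varepsilon}
=\nabla b(\bar X^0+\theta\varepsilon^H\lambda\eta)\,(\eta-\tilde X^{h^\varepsilon})
+\big[\nabla b(\bar X^0+\theta\varepsilon^H\lambda\eta)-\nabla b(\bar X^0)\big]\tilde X^{h^\varepsilon}.
\]
The Lipschitz bound in \textbf{(A4)} then puts the gradient increment on the skeleton, which is bounded by $C_{N,T}$ almost surely by (\ref{bound}). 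The remainder $C\varepsilon^H\lambda|\eta|\,|\tilde X^{h^\varepsilon}|$ is therefore linear in $\eta$, and the estimate closes in $L^2$ using only the second-moment bound (\ref{etabound}).

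This is what your truncation fallback is after, but it needs no truncation, because the bounded factor is the skeleton rather than $\eta$ itself.

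Each route buys something different. Yours does not use (\ref{bound}) in the estimate at all; it is needed only to make $\mathbb{E}\sup_t|\eta^{\varepsilon,h^\varepsilon}-\tilde X^{h^\varepsilon}|$ finite before Gronwall. It yields $L^1$ convergence, which is all Chebyshev requires. The paper's split gives the stronger $L^2$ convergence, from which your statement follows by Jensen.
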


\begin{proof}
	Recall that \({\eta^{\varepsilon,h^{\varepsilon}}}(t)=\Upsilon^{\varepsilon}\left(\frac{1}{\lambda(\varepsilon)}B^H(t)+\int_{0}^{t}h^\varepsilon(s)ds\right)\) and \(\tilde{X}^{h^\varepsilon}(t)=\Upsilon^{0}\left(\int_{0}^{t}h^\varepsilon(s)ds\right)\). Note that \({\eta^{\varepsilon,h^{\varepsilon}}}(t)\) has the representation (\ref{eta}) and \(\tilde{X}^{h^\varepsilon}(t)\) satisfies the following equation
	\begin{align*}
            \tilde{X}^{h^\varepsilon}(t) = \int_{0}^{t}\nabla b(\bar{X}^0(s))\tilde{X}^{h^\varepsilon}(s)ds 
            + \int_{0}^{t}\sigma(\bar{X}^0(s)) h(s)ds.
        \end{align*}
	
	Then \({\eta^{\varepsilon,h^{\varepsilon}}}(t)-\tilde{X}^{h^\varepsilon}(t)\) can be decomposed as the next three parts:
	\begin{align*}
            & {\eta^{\varepsilon,h^{\varepsilon}}}(t)-\tilde{X}^{h^\varepsilon}(t) \\
            &= \biggl[\frac{1}{\varepsilon^H\lambda(\varepsilon)}\int_0^{t} b(\bar{X}^{0}(s) + \varepsilon^H \lambda(\varepsilon) \eta^{\varepsilon,h^\varepsilon}(s))-b(\bar{X}^{0}(s))ds \\
            &\quad\quad\quad\quad\quad - \int_0^{t}\nabla b(\bar{X}^0(s))\tilde{X}^{h^\varepsilon}(s)ds\biggr] \\
            &\quad + \left[\int_0^{t}\left[\sigma(\bar{X}^{0}(s) + \varepsilon^H \lambda(\varepsilon) \eta^{\varepsilon,h^\varepsilon}(s))-\sigma(\bar{X}^{0}(s))\right]{h}^{\varepsilon}(s)ds\right] \\
            &\quad + \biggl[\frac{\mu}{\varepsilon^H\lambda(\varepsilon)}y_{0}(1-e^{-\frac{t}{\mu}})-\frac{1}{\varepsilon^H\lambda(\varepsilon)}e^{-\frac{t}{\mu}}\int_0^{t}e^{\frac{s}{\mu}}b(\bar{X}^{0}(s) + \varepsilon^H \lambda(\varepsilon) \eta^{\varepsilon,h^\varepsilon}(s))ds \\
            &\quad\quad\quad\quad\quad - e^{-\frac{t}{\mu}}\int_0^{t}e^{\frac{s}{\mu}}\sigma(\bar{X}^{0}(s) + \varepsilon^H \lambda(\varepsilon) \eta^{\varepsilon,h^\varepsilon}(s)){h}^{\varepsilon}(s)ds \\
            &\quad\quad\quad\quad\quad + \frac{1}{\lambda(\varepsilon)}\int_0^{t}\sigma(\bar{X}^{0}(s) + \varepsilon^H \lambda(\varepsilon) \eta^{\varepsilon,h^\varepsilon}(s))dB^H(s) \\
            &\quad\quad\quad\quad\quad - \frac{1}{\lambda(\varepsilon)}e^{-\frac{t}{\mu}}\int_0^{t}e^{\frac{s}{\mu}}\sigma(\bar{X}^{0}(s) +\varepsilon^H \lambda(\varepsilon) \eta^{\varepsilon,h^\varepsilon}(s))dB^H(s)\biggr] \\
            &= \sum_{i=1}^{3}I_{i}.
        \end{align*}
	
	For the term \(I_{1}\), by the mean value theorem with \(r \in [0,1]\), we have
	\begin{align*}
            I_{1} &= \left[\frac{1}{\varepsilon^H\lambda(\varepsilon)}\int_{0}^{t}\left[b(\bar{X}^{0}(s) + \varepsilon^H \lambda(\varepsilon) \eta^{\varepsilon,h^\varepsilon}(s))-b(\bar{X}^{0}(s))\right]ds\right.  \left. -\int_{0}^{t}\nabla b(\bar{X}^0(s))\tilde{X}^{h^\varepsilon}(s)ds\right] \\
            &= \int_{0}^{t}\int_{0}^{1}\nabla b(\bar{X}^{0}(s) + r\varepsilon^H \lambda(\varepsilon) \eta^{\varepsilon,h^\varepsilon}(s))\eta^{\varepsilon,h^\varepsilon}(s)drds  -\int_{0}^{t}\nabla b(\bar{X}^0(s))\tilde{X}^{h^\varepsilon}(s)ds.
        \end{align*}

	By \textbf{(A4)} we have
    
	\begin{align*}
            |I_{1}| &= \int_0^{t} \left| \int_0^{1} \nabla b(\bar{X}^{0}(s) + r\varepsilon^H \lambda(\varepsilon) \eta^{\varepsilon,h^\varepsilon}(s)) \eta^{\varepsilon,h^\varepsilon}(s) dr \right.  \left. - \nabla b(\bar{X}^0(s)) \tilde{X}^{h^\varepsilon}(s) \right| ds \\
            &\leq \int_0^{t} \left| \int_0^{1} \nabla b(\bar{X}^{0}(s) + r\varepsilon^H \lambda(\varepsilon) \eta^{\varepsilon,h^\varepsilon}(s)) \eta^{\varepsilon,h^\varepsilon}(s) \right. \\
            &\quad \left. - \nabla b(\bar{X}^{0}(s) + r\varepsilon^H \lambda(\varepsilon) \eta^{\varepsilon,h^\varepsilon}(s)) \tilde{X}^{h^\varepsilon}(s) dr \right| ds \\
            &\quad + \int_0^{t} \left| \int_0^{1} \nabla b(\bar{X}^{0}(s) + r\varepsilon^H \lambda(\varepsilon) \eta^{\varepsilon,h^\varepsilon}(s)) \tilde{X}^{h^\varepsilon}(s) dr \right.  \left. - \nabla b(\bar{X}^0(s)) \tilde{X}^{h^\varepsilon}(s) \right| ds \\
            &\leq C \int_0^{t} |\eta^{\varepsilon,h^\varepsilon}(s) - \tilde{X}^{h^\varepsilon}(s)| ds  + C \int_0^{t} \varepsilon^H \lambda(\varepsilon) |\eta^{\varepsilon,h^\varepsilon}(s)| |\tilde{X}^{h^\varepsilon}(s)| ds.
        \end{align*}
	
	Then combining (\ref{bound}) and (\ref{etabound}), we deduce that
	\begin{align*}
            \mathbb{E}\sup_{0\leq t\leq T}|I_{1}|^{2} 
            \leq C\int_{0}^{T}\mathbb{E}\sup_{0\leq r\leq s}|\eta^{\varepsilon,h^\varepsilon}(r)-\tilde{X}^{h^\varepsilon}(r)|^{2}ds + \varepsilon^{2H}\lambda^{2}(\varepsilon)C_{N,K,T}.
        \end{align*}
	
	Applying the Cauchy-Schwarz inequality and recalling \(h^{\varepsilon}\in\mathscr{A}_{N}\), we obtain from assumption \textbf{(A2)} and (\ref{etabound}) that
	\begin{align*}
            \mathbb{E}\sup_{0\leq t\leq T}|I_{2}|^{2} 
            &\leq C\mathbb{E}\int_{0}^{T}|\sigma(\bar{X}^{0}(s) + \varepsilon^H \lambda(\varepsilon) \eta^{\varepsilon,h^\varepsilon}(s))-\sigma(\bar{X}^{0}(s))|^{2}ds \int_{0}^{T}|h^\varepsilon(s)|^{2}ds \\
            &\leq CN\int_{0}^{T}|\varepsilon^H\lambda(\varepsilon)\eta^{\varepsilon,h^\varepsilon}(s)|^{2}ds \\
            &\leq \varepsilon^{2H}\lambda^{2}(\varepsilon)C_{N,K,T}.
        \end{align*}
	
	By Lemma \ref{mdpest}, we deduce that
	\begin{align*}
            \mathbb{E}\sup_{0\leq t\leq T}|I_{3}|^{2} 
            \leq \left(\frac{\mu^{2}}{\varepsilon^{2H}\lambda^{2}(\varepsilon)}+\mu+\frac{1}{\lambda^{2}(\varepsilon)}+\frac{\mu}{\lambda^{2}(\varepsilon)}\right)C_{N,K,T}.
        \end{align*}
	
	Thus, we arrived at
	\begin{align*}
            \mathbb{E}\sup_{0\leq t\leq T}|{\eta^{\varepsilon,h^{\varepsilon}}}(t)-\tilde{X}^{h^\varepsilon}(t)|^{2} 
            &\leq C\int_{0}^{T}\mathbb{E}\sup_{0\leq r\leq s}|\eta^{\varepsilon,h^\varepsilon}(r)-\tilde{X}^{h^\varepsilon}(r)|^{2}ds \\
            &\quad + \left(\varepsilon^{2H}\lambda^{2}(\varepsilon)+\frac{\mu^{2}}{\varepsilon^{2H}\lambda^{2}(\varepsilon)}+\mu+\frac{1}{\lambda^{2}(\varepsilon)}+\frac{\mu}{\lambda^{2}(\varepsilon)}\right)C_{N,K,T}.
        \end{align*}
	
	Then the Gronwall's inequality yields
        \begin{align*}
            \mathbb{E}\sup_{0\leq t\leq T}|{\eta^{\varepsilon,h^{\varepsilon}}}(t)-\tilde{X}^{h^\varepsilon}(t)|^{2} 
            &\leq \left(\varepsilon^{2H}\lambda^{2}(\varepsilon)+\frac{\mu^{2}}{\varepsilon^{2H}\lambda^{2}(\varepsilon)}+\mu\right.\\
            &\quad\left.+\frac{1}{\lambda^{2}(\varepsilon)}+\frac{\mu}{\lambda^{2}(\varepsilon)}\right) C_{N,K,T}e^{CT}.
        \end{align*}
	
	Since \({\varepsilon}^H\lambda(\varepsilon), \frac{1}{\lambda(\varepsilon)}, \frac{\mu}{{\varepsilon}^H}, \mu\to 0\) as \(\varepsilon\to 0\), applying Chebyshev's inequality completes the proof.
\end{proof}

\textbf{Acknowledgement} ~I would like to sincerely thank my supervisor Associate Professor Qian Yu, who has led the way to this work. 

\textbf{Data Availability Statements} ~The data that support the findings of this study are available from the corresponding author upon reasonable request.

\textbf{Declaration of interests} ~The authors declare that they have no known competing financial interests or personal relationships that
could have appeared to influence the work reported in this paper.	

	\bibliographystyle{plain}
	\addcontentsline{toc}{section}{References}
	\bibliography{zs.bib}

\end{document}